\theoremstyle{plain}
\newtheorem{thm}[equation]{Theorem}
\newtheorem{prop}[equation]{Proposition}
\newtheorem{lem}[equation]{Lemma}
\newtheorem{cor}[equation]{Corollary}
\theoremstyle{definition}
\newtheorem{defn}[equation]{Definition}
\theoremstyle{remark}
\newtheorem{rem}[equation]{Remark}
\newtheorem{remark}[equation]{Remark}
\newtheorem{claim}[equation]{Claim}
\newtheorem{notation}[equation]{Notation}
\renewcommand{\subsection}{\@startsection{subsection}{2}{0pt}{-3ex
plus -1ex minus -0.2ex}{-2mm plus -0pt minus
-2pt}{\normalfont\bfseries}} \makeatother
\numberwithin{equation}{section}
\numberwithin{equation}{subsection}
\newcommand{\erem}{\hphantom{.}\hfill$\lozenge$\end{rem}}
\newcommand{\bimod}[1]{#1\text{-}{\sf{bimod}}}
\newcommand{\br}{_{\natural}}
\newcommand{\hdot}{{\:\raisebox{2pt}{\text{\circle*{1.5}}}}}
\newcommand{\idot}{{\:\raisebox{2pt}{\text{\circle*{1.5}}}}}
\DeclareMathOperator{\Ext}{\mathrm{Ext}}
\DeclareMathOperator{\rk}{\mathrm{rk}}
\DeclareMathOperator{\Lie}{\mathrm{Lie}}
\DeclareMathOperator{\Tr}{\mathrm{Tr}}
\DeclareMathOperator{\Rep}{\mathrm{Rep}}
\DeclareMathOperator{\Ad}{\mathrm{Ad}}
\DeclareMathOperator{\act}{\mathrm{act}}
\def\map{\longrightarrow}
\newcommand{\dis}{\displaystyle}
\DeclareMathOperator{\Spec}{\mathrm{Spec}}
\DeclareMathOperator{\pr}{pr}
\newcommand{\iso}{{\;\stackrel{_\sim}{\to}\;}}
\newcommand{\cd}{\!\cdot\!}
\DeclareMathOperator{\GL}{\mathrm{GL}}
\def\ccirc{{{}_{\,{}^{^\circ}}}}
\newcommand{\ggamma}{{\boldsymbol{\gamma}}}
\newcommand{\g}[1]{\mathfrak{#1}}
\newcommand{\scr}[1]{\mathscr{#1}}
\newcommand{\ddr}{ d_{_\text{DR}}}
\DeclareMathOperator{\End}{\mathrm{End}}
\newcommand{\oEnd}{\mathrm{End}}
\newcommand{\oba}{\Om^B\!A }
\newcommand{\boba}{\bom^B\!A }
\DeclareMathOperator{\Hom}{\mathrm{Hom}}
\DeclareMathOperator{\Der}{\mathrm{Der}}
\DeclareMathOperator{\dder}{{\mathbb{D}er}}
\newcommand{\arr}{\overset{{\,}_\to}}
\renewcommand{\b}{{\mathsf b}}
\renewcommand{\d}{{\mathsf d}}
\newcommand{\exact}{_{\op{exact}}}
\newcommand{\closed}{_{\op{closed}}}
\newcommand{\De}{\Delta}
\newcommand{\id}{\mathrm{id}}
\newcommand{\Id}{\mathrm{Id}}
\newcommand{\op}{\operatorname}
\newcommand{\Mat}{\mathrm{Mat}}
\newcommand{\fr}{{\mathfrak m}}
\newcommand{\im}{\mathrm{Im}}
\newcommand{\wt}{\widehat }
\newcommand{\La}{{\mathsf{\Lambda}}}
\newcommand{\bc}{{\mathbf{c}}}
\newcommand{\cyclic}{_{\text{cyc}}}
\newcommand{\cyc}{{\text{cyc}}}
\renewcommand{\a}{\!A}
\newcommand{\bas}{_{\operatorname{basic}}}
\newcommand{\bi}{\imath}
\newcommand{\bl}{{\scr L}}
\newcommand{\oma}{{\Om^1\a}}
\newcommand{\com}{_{\text{\footnotesize comm}}}
\newcommand{\omb}{{\Om^\hdot\a}}
\newcommand{\omh}{{\Om^\hdot}}
\newcommand{\rep}{{\Rep(A,V)}}
\DeclareMathOperator{\Aut}{\mathrm{Aut}}
\newcommand{\bom}{{\overline{\Omega}}}
\newcommand{\bomt}{\bom \hat \otimes \k[t,t^{-1}]}
\newcommand{\ttc}{\hat \otimes \k[t,t^{-1}]}
\renewcommand{\o}{{\otimes}}
\newcommand{\be}{\beta}
\DeclareMathOperator{\ad}{\mathrm{ad}}
\def\bplus{{\mbox{$\bigoplus$}}}
\DeclareMathOperator{\Ker}{\mathrm{Ker}}
\DeclareMathOperator{\ev}{{\mathrm{ev}}}
\def\calT{{\mathscr{T}}}
\newcommand{\evom}{\ev_{_\Om}}
\newcommand{\BL}{{\mathbf{L}}}
\newcommand{\bbi}{{\mathbf i}}
\renewcommand{\lll}{{[\![}}
\newcommand{\rrr}{{]\!]}}
\newcommand{\mto}{\mapsto}
\renewcommand{\lto}{\longmapsto}
\newcommand{\triv}{{\mathsf{triv}}}
\newcommand{\inv}{^{-1}}
\newcommand{\vi}{${\en\sf {(i)}}\;$}
\newcommand{\vii}{${\;\sf {(ii)}}\;$}
\newcommand{\viii}{${\sf {(iii)}}\;$}
\newcommand{\sset}{\subset}
\newcommand{\into}{{}^{\,}\hookrightarrow^{\,}}
\newcommand{\too}{\,\longrightarrow\,}
\newcommand{\onto}{\twoheadrightarrow}
\newcommand{\tooo}{{\;{-\!\!\!-\!\!\!-\!\!\!-\!\!\!\longrightarrow}\;}}
\newcommand{\oper}{\operatorname}
\newcommand{\R}{\mathsf R}
\newcommand{\Z}{\mathbb Z}
\newcommand{\Q}{\mathbb Q}
\newcommand{\DR}{{\oper{DR}}}
\newcommand{\starb}{\star_{_\beta}}
\renewcommand{\th}{\theta}
\newcommand{\munc}{{\mu_{\mathrm{nc}}}}
\newcommand{\Om}{\Omega}
\newcommand{\Th}{\Theta}
\DeclareMathOperator{\gr}{\mathrm{gr}}
\def\ip<#1,#2>{\left\langle#1,#2\right\rangle}
\def\sp<#1>{\left\langle#1\right\rangle}
\newcommand{\Triv}{\mathrm{Triv}}
\newcommand{\ch}{\mathrm{ch}}
\newcommand{\eu}{\mathsf{eu}}
\def\ip<#1,#2>{\left\langle#1,#2\right\rangle}
\def\npb{\noindent$\bullet\quad$\parbox[t]{158mm}}
\newcommand{\om}{\omega}
\newcommand{\ka}{\kappa}
\newcommand{\al}{{\alpha}}
\newcommand{\A}{{\mathscr A}}
\newcommand{\en}{{\enspace}}
\newcommand{\wh}{\widehat}
\newcommand\ldp{(}  \newcommand\rdp{)}
\def\LL{{\mathbf{L}}}
\def\D{{\mathsf{D}}}
\def\B{{\sf B}}
\def\k{{\Bbbk}}
\def\s{{\mathbb{S}}}
\begin{document}

\title{Free products, cyclic homology,  and \\
\vskip2pt
the Gauss-Manin connection} 
\author{\small{Victor Ginzburg and Travis Schedler}}
\maketitle
\vskip -5mm
\centerline{\textit{with an Appendix by}}
\vskip2pt
\centerline{\sc {\large{Boris Tsygan}}}

\begin{abstract} We use the techniques of Cuntz and Quillen to present
  a new approach to periodic cyclic homology.  Our construction is
  based on $(\dis(\omb)[t], \d+t\cdot\bi_\De)$, a {\em noncommutative
    equivariant de Rham complex} of an associative algebra $A$.  Here
  $\d$ is the Karoubi-de Rham differential and $\bi_\Delta$ is an
  operation analogous to contraction with a vector field.  As a
  byproduct, we give a simple explicit construction of the Gauss-Manin
  connection, introduced earlier by E. Getzler, on the relative
  periodic cyclic homology of a flat family of associative algebras
  over a central base ring.

We introduce and study  \textit{free-product
deformations} of an associative algebra, a new type of deformation over
a not necessarily commutative base ring.
Natural examples of free-product
deformations arise from  preprojective algebras and group algebras for compact
surface groups.
\end{abstract}

\tableofcontents

\section{Introduction}
Throughout, we fix a field $\k$ of characteristic 0 and write
$\o=\o_\k$ (although, up to passing to reduced versions of homology
theories, everything generalizes to the case that $\k$ is an arbitrary
commutative ring containing $\Q$, provided the algebra $A$ has the
property that $\k \to A$ is a $\k$-split central injection). By an
algebra we always mean an associative unital $\k$-algebra, unless
explicitly stated otherwise.  Given an algebra $A$, we view the space
$A\otimes A$ as an $A$-bimodule with respect to the {\em outer}
bimodule structure, which is defined by the formula $b(a'\otimes
a'')c:=(ba')\otimes (a''c)$, for any $a',a'',b,c\in A$. By an
$A$-bimodule, we will always mean an $(A \otimes
A^{\text{op}})$-module, i.e., an $A$-bimodule on which the left and
right action of $\k$ coincide.

\subsection{Double derivations}
It is well-known that a regular vector field on a smooth affine
algebraic variety $X$ is the same thing as a derivation 
$\k[X]\to \k[X]$ of the coordinate ring of $X$. Thus,
derivations of a commutative algebra  play the role
of vector fields.

It has been commonly accepted until recently that this
 point of view 
 applies to noncommutative algebras  as well.
A first indication towards a different point of view
was a discovery by Crawley-Boevey \cite{CB} that,
for a smooth affine
curve $X$ with coordinate ring $A=\k[X]$,
the algebra of differential operators on $X$
 can be constructed by means of {\em double derivations}
$A\to A\o A$, rather than ordinary derivations
$A\to A$. Since then, the significance
of double derivations in noncommutative geometry
was  explored further in \cite{VdB} and \cite{CBEG}.

To explain the role of  double derivations in more detail
we first recall 
 some basic definitions.

\subsection{(Double) derivations as infinitesimal automorphisms}\label{dder}
Recall that a free product of two  algebras ${A}$ and $B$, 
is an associative algebra ${A}*B$ that contains $A$ and $B$ as
subalgebras
and
 whose elements are formal $\k$-linear combinations of words
$a_1b_1a_2b_2\ldots a_nb_n$, for any $n\geq 1$ and
$a_1,\ldots,a_n\in {A},\,b_1,\ldots,b_n\in B$.
These words are taken up to the equivalence  imposed
by the relation $1_{A}=1_{B}$; for instance, 
$\ldots b1_{A} b'\ldots=\ldots b1_{B}b'\ldots=\ldots
(b\cdot b')\ldots$,
for any $b,b'\in B$. 

 Let  $N$ be an $A$-{\em bimodule}. 
A  $\k$-linear map $ f: A\to N$
is said to be a derivation of $A$ with coefficients in $N$
if  $ f(a_1a_2)= f(a_1)a_2+a_1 f(a_2),\,
\forall a_1,a_2\in A$.
Given  a subalgebra $R\sset A$,
 we let $\Der_R(A,N)$ denote the space of $R$-linear derivations 
of $A$ with respect to the subalgebra $R$, that is,
of derivations $A\to N$ that annihilate the subalgebra $R$.

Derivations of an algebra $A$ may 
be viewed as `infinitesimal automorphisms'. 
Specifically,  let $A[t]=A\o\k[t]$ be the
polynomial ring in one variable with coefficients in $A$.
The natural algebra embedding
$A\into A[t]$ makes $A[t]$ an $A$-bimodule.

A well-known elementary calculation yields the following.

\begin{lem}\label{[t]}  The following properties of a $\k$-linear map  $\th: A\to A$
are equivalent:

\npb{the map  $\th$ is a derivation of the algebra $A$;}

\npb{the map $A\to t\cdot A[t],\,a\mapsto t\cdot \th(a)$
is a derivation of the algebra $A$ with coefficients in $t\cdot A[t]$;}

\npb{the map $a\mto a+ t\cdot \th(a)$ gives
an algebra homomorphism
$ A\to A[t]/t^2\cd A[t]$.}
\end{lem}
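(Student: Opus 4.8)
The plan is to prove the chain of equivalences (i)~$\Leftrightarrow$~(ii)~$\Leftrightarrow$~(iii) by direct computation, exploiting the fact that the target rings $t\cdot A[t]$ and $A[t]/t^2\cdot A[t]$ are built out of $A$ in transparent ways. First I would set up notation: write $\epsilon$ for the image of $t$ in $A[t]/t^2\cdot A[t]$, so that $\epsilon^2 = 0$ and $A[t]/t^2\cdot A[t] = A \oplus \epsilon A$ as a $\k$-module, with multiplication $(a + \epsilon b)(a' + \epsilon b') = aa' + \epsilon(ba' + ab')$. Similarly $t\cdot A[t]$ is the $A$-bimodule consisting of polynomials in $t$ with vanishing constant term, but for the purpose of (ii) only the degree-one part $t\cdot A$ matters, and the outer bimodule structure there is $b\cdot(ta')\cdot c = t(ba'c)$.

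The core of the argument is the observation that, for a $\k$-linear map $\theta : A \to A$, the map $\phi_\theta : a \mapsto a + \epsilon\,\theta(a)$ is multiplicative precisely when
\[
\phi_\theta(a_1 a_2) = \phi_\theta(a_1)\,\phi_\theta(a_2)
\]
for all $a_1, a_2 \in A$. Expanding the right-hand side in $A \oplus \epsilon A$ gives $a_1 a_2 + \epsilon\bigl(\theta(a_1)\,a_2 + a_1\,\theta(a_2)\bigr)$, using $\epsilon^2 = 0$, while the left-hand side is $a_1 a_2 + \epsilon\,\theta(a_1 a_2)$. Comparing the two components (and noting $\phi_\theta(1) = 1 + \epsilon\,\theta(1)$ forces $\theta(1)=0$, which is automatic for a derivation), multiplicativity is equivalent to the Leibniz identity $\theta(a_1 a_2) = \theta(a_1)a_2 + a_1\theta(a_2)$ for all $a_1,a_2$. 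This establishes (i)~$\Leftrightarrow$~(iii). For (i)~$\Leftrightarrow$~(ii), I would note that $a \mapsto t\cdot\theta(a)$ satisfies the derivation identity with values in the $A$-bimodule $t\cdot A[t]$ if and only if $t\cdot\theta(a_1 a_2) = (t\cdot\theta(a_1))\cdot a_2 + a_1\cdot(t\cdot\theta(a_2)) = t\cdot(\theta(a_1)a_2 + a_1\theta(a_2))$, and cancelling $t$ (legitimate since $t\cdot A \hookrightarrow t\cdot A[t]$ is injective and $t$ is a nonzerodivisor) recovers the Leibniz rule for $\theta$; $\k$-linearity transfers in both directions without change.

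There is really no serious obstacle here — the statement is genuinely the ``well-known elementary calculation'' the text advertises, and each implication is a one-line check modulo bookkeeping. The only point requiring a moment's care is the identification of the bimodule structure in part (ii): one must use the \emph{outer} bimodule structure on $A\otimes\k[t]$ coming from the embedding $A \hookrightarrow A[t]$, i.e. $b\cdot(t^n\!\cdot a)\cdot c = t^n\!\cdot(bac)$, rather than any other action, so that the derivation condition for $a \mapsto t\cdot\theta(a)$ unwinds exactly to the Leibniz identity for $\theta$ and nothing more. Once that is pinned down, the three reformulations are visibly the same identity written in three ambient rings, and the lemma follows.
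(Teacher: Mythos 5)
The paper gives no proof of this lemma --- it is explicitly labeled as ``a well-known elementary calculation,'' so there is nothing to compare against. Your proof is correct and is the standard, direct computation one would write down: comparing $\epsilon$-components of $\phi_\theta(a_1a_2)$ and $\phi_\theta(a_1)\phi_\theta(a_2)$ for (i)$\Leftrightarrow$(iii), and cancelling the nonzerodivisor $t$ for (i)$\Leftrightarrow$(ii). One small terminological quibble: you describe the $A$-bimodule structure on $t\cdot A[t]$ as the ``outer'' structure, borrowing language the paper reserves for $A\otimes A$ with $b(a'\otimes a'')c=(ba')\otimes(a''c)$; here the relevant structure is simply multiplication inside $A[t]$, which (because $t$ is central) happens to coincide with the formula $b\cdot(t^n a)\cdot c=t^n(bac)$ you wrote, so the content is right even if the name is a bit off. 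The point of the lemma in the paper is precisely to set up the contrast with Lemma~\ref{del_der}, where replacing the central $t$ by a freely adjoined $t$ turns ordinary derivations into double derivations; your observation that the cancellation of $t$ hinges on $t$ being central and a nonzerodivisor is exactly what breaks in that free-product setting.
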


All the above holds true, of course, no matter whether
the algebra $A$ is commutative or not.
Yet, the element $t$, the formal parameter,
 is by definition a {\em central} element
of the algebra  ~$A[t]$.  
In noncommutative geometry, 
the assumption that the formal parameter $t$ be central
is not quite natural, however.
Put another way, while the tensor product is a coproduct
in the category of commutative associative algebras, the free product
is  a coproduct
in the category of not necessarily  commutative associative algebras.

We see that, in noncommutative geometry,
the algebra $A_t=A*\k[t]$,
freely generated by $A$ and an indeterminate $t$,
should play the role of  the polynomial algebra 
$A[t]$.
We are going to argue that, once the polynomial algebra
$A[t]$
is replaced by the algebra $A_t$,
it becomes more natural to replace
derivations $A\to A$ by  {\em double derivations}, i.e., by derivations 
$A\to A\o A$ where $A\o A$ is viewed as an $A$-bimodule with respect to
the outer bimodule structure.
To see this, write $A_t^+=A_t\cdot t\cdot A_t$,
a two-sided ideal generated by the element $t$.
Then, there are natural $A$-bimodule isomorphisms
\begin{equation}\label{AtA}
A_t/A_t^+\iso A,
\quad\text{and}\quad A_t/(A^+_t)^2\,\iso\,
A\oplus (A\o A),
\quad  a+ a'\,t\,a''\lto a \oplus (a'\o a'').
\end{equation} 

Given  a $\k$-linear map $\Th: A\to A\o A$ we will  use symbolic 
Sweedler notation to write this map
as $a\mto \Th'(a)\otimes\Th''(a)$,
where we systematically
suppress the summation symbol.

Now, a free product analogue of Lemma~\ref{[t]} reads as follows.

\begin{lem}\label{del_der}  The following properties of a $\k$-linear map  
 $\Th: A\to A\o A$
are equivalent:

\npb{the map $\Th$ is a double derivation;}

\npb{the map $A\to A^+_t,\
a\mto \Th'(a)\,t\,\Th''(a)$, is a  derivation of the algebra $A$ with coefficients
in the $A$-bimodule $A^+_t$;}

\npb{the map
$a\mto a+\Th'(a)\,t\,\Th''(a)$
gives an algebra homomorphism
$ A\to A_t/(A_t^+)^2
$.}
\end{lem}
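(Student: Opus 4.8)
The plan is to mimic exactly the structure of the elementary Lemma~\ref{[t]}, transporting each equivalence through the $A$-bimodule isomorphism $A_t/(A_t^+)^2 \iso A \oplus (A\o A)$ of \eqref{AtA}. The key point is that the three conditions are really three incarnations of one identity, so I would prove the cycle (i)$\Rightarrow$(ii)$\Rightarrow$(iii)$\Rightarrow$(i), with the computations in (ii) and (iii) taking place in the quotient algebra $A_t/(A_t^+)^2$, where all products of \emph{two} elements of $A_t^+$ vanish.

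First I would set up notation: write $\bar t$ for the image of $t$ in $A_t/(A_t^+)^2$, so that $\bar t \cdot a \cdot \bar t$ and more generally any word containing $\bar t$ twice is zero, while $A^+_t/(A_t^+)^2 \cong A\o A$ via $a' t a'' \mapsto a'\o a''$ (the second map in \eqref{AtA}), with $A$-bimodule structure given by the outer structure $b(a'\o a'')c = (ba')\o(a''c)$. For (i)$\Rightarrow$(ii): given a double derivation $\Th$, define $\delta\colon A\to A_t^+$ by $\delta(a) = \Th'(a)\,t\,\Th''(a)$; then compute, working modulo $(A_t^+)^2$ to make sense of the $A$-bimodule structure,
\[
\delta(a_1a_2) = \Th'(a_1a_2)\,t\,\Th''(a_1a_2) = \bigl(\Th'(a_1)a_2 \o a_1\Th''(a_2) + a_1\Th'(a_2)\o\Th''(a_2) + \ldots\bigr),
\]
wait — more carefully, the double-derivation Leibniz rule reads $\Th(a_1a_2) = \Th(a_1)\cdot a_2 + a_1\cdot\Th(a_2)$ in the outer bimodule, i.e. $\Th'(a_1a_2)\o\Th''(a_1a_2) = \Th'(a_1)\o\Th''(a_1)a_2 + a_1\Th'(a_2)\o\Th''(a_2)$, which under $a'\o a''\mapsto a'ta''$ becomes $\delta(a_1a_2) = \Th'(a_1)\,t\,\Th''(a_1)a_2 + a_1\Th'(a_2)\,t\,\Th''(a_2) = \delta(a_1)a_2 + a_1\delta(a_2)$, so $\delta$ is a derivation with coefficients in $A^+_t$. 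This is reversible, giving (ii)$\Rightarrow$(i).

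For (ii)$\Leftrightarrow$(iii): if $\delta\colon A\to A_t^+$ is any $\k$-linear map, consider $\f\colon A\to A_t/(A_t^+)^2$, $\f(a) = a + \delta(a)$ (with $\delta(a)$ read modulo $(A_t^+)^2$). Then $\f(a_1)\f(a_2) = a_1a_2 + a_1\delta(a_2) + \delta(a_1)a_2 + \delta(a_1)\delta(a_2)$, and the last term lies in $(A_t^+)^2$, hence vanishes in the quotient; so $\f(a_1)\f(a_2) = a_1a_2 + a_1\delta(a_2) + \delta(a_1)a_2$, which equals $\f(a_1a_2) = a_1a_2 + \delta(a_1a_2)$ for all $a_1,a_2$ if and only if $\delta(a_1a_2) = \delta(a_1)a_2 + a_1\delta(a_2)$, i.e. iff $\delta$ is a derivation into $A_t^+$. (Unitality $\f(1)=1$ forces $\delta(1)=0$, which is automatic for a derivation.) Chaining the two equivalences closes the loop.

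I do not expect a genuine obstacle here — this is the "well-known elementary calculation" analogue promised before Lemma~\ref{[t]}. The only thing requiring a little care is bookkeeping: one must consistently work in $A_t/(A_t^+)^2$ rather than in $A_t$ itself (so that $(A_t^+)^2$-terms may be discarded) and must check that the identification $A^+_t/(A_t^+)^2\cong A\o A$ in \eqref{AtA} intertwines the \emph{outer} $A$-bimodule structure on $A\o A$ with the evident bimodule structure on the ideal — which is exactly what makes the double-derivation Leibniz rule match the ordinary Leibniz rule for $\delta$. Everything else is a direct expansion of products.
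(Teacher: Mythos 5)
The paper offers no proof of Lemma~\ref{del_der}; it is presented as the ``free product analogue'' of Lemma~\ref{[t]}, which the paper itself attributes to ``a well-known elementary calculation.'' Your calculation is exactly that intended elementary verification, and it is correct; the only small superfluity is the remark about ``working modulo $(A_t^+)^2$'' in the (i)$\Leftrightarrow$(ii) step — this is unnecessary, since $a'\o a'' \mapsto a'\,t\,a''$ is already an $A$-bimodule embedding of $A\o A$ (with the outer structure) into $A_t^+$ itself, so the Leibniz rules match directly in $A_t^+$; the quotient by $(A_t^+)^2$ is needed only for step (iii).
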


\subsection{Layout of the paper} In \S \ref{extkadrcplx}, we recall the
definition of the DG algebra of noncommutative differential forms
\cite{Co,CC}, following \cite{CQ1}, and that of the Karoubi-de Rham
complex \cite{Ka}.  We also introduce an {\em extended Karoubi-de
  Rham complex},
that will play a crucial role later.  In \S \ref{noncomm_calc}, we
develop the basics of noncommutative calculus involving the action of
double derivations on the extended Karoubi-de Rham complex, via Lie
derivative and contraction operations.

In \S 4, we state three main results of the paper.  The first two,
Theorem \ref{hoch_thm} and Theorem \ref{main}, provide a description,
in terms of the Karoubi-de Rham complex, of the Hochschild homology of
an algebra $A$ and of the periodic cyclic homology of $A$,
respectively.  The third result, Theorem \ref{gmthm}, gives a formula
for the Gauss-Manin connection on periodic cyclic homology of a family
of algebras, \cite{Getz}, in a way that avoids complicated formulas
and resembles equivariant cohomology.  These results are proved in \S
\ref{pf_main}, using properties of the Karoubi operator and the
harmonic decomposition of noncommutative differential forms introduced
by Cuntz and Quillen, \cite{CQ1, CQ2}.

In \S \ref{repf}, we establish a connection between cyclic homology and
equivariant cohomology via the representation functor.  More
precisely, we give a homomorphism from our noncommutative equivariant
de Rham
complex (which extends the complex used to compute cyclic homology) to
the equivariant de Rham complex computing equivariant cohomology
of the representation variety.

In \S \ref{ncds}, we introduce a new  notion of free product
deformation over a not necessarily commutative base. We extend  classic
results of Gerstenhaber
concerning deformations of an associative algebra $A$ to our new setting
 of free product
deformations. To this end, we consider a double-graded Hochschild 
complex $\big(\oplus_{p, k\geq 2}\ C^p(A, A^{\o k}),\ \b\big)$. We define a new
associative product $f,g\mto f\vee g$ on that complex, and study
Maurer-Cartan equations of the form $\b(\be)+\frac{1}{2}\be\vee\be=0$.

\subsection{Acknowledgements}{\small{We 
thank   Yan Soibelman, Boris Tsygan,  and Michel Van den Bergh
for useful comments.
The first author  was partially supported by the NSF grant
DMS-0303465 and CRDF grant RM1-2545-MO-03. The second author
was partially supported by an NSF GRF.}}

\section{Noncommutative calculus} \label{noncomm_calc}
\subsection{The commutator quotient} 
Let $B=\oplus_{k\in \Z} B^k$ be a $\Z$-graded
algebra and $M=\oplus_{k\in \Z} M^k$ a  $\Z$-graded $B$-bimodule.
Given a homogeneous element $u\in B^k$ or $u\in M^k$,
we put $|u|:=k$. 
A linear map $f: B^\hdot\to M^{\hdot+n}$ is said
to be a {\em degree} $n$  {\em graded} derivation if, for any
 homogeneous $u,v\in B$, 
$f(uv)=f(u)\cdot v+ (-1)^{n|u|}u\cdot f(v)$. 
Given a graded subalgebra $R\sset B$,
we let $\Der^n_R(B,M)$ denote the vector
space of degree $n$ graded $R$-linear derivations. 
The direct sum 
$\Der^\hdot_R\! B:=
\bigoplus_{n\in \Z}\Der^n_R(B,B)$ has a natural Lie {\em super}-algebra structure
given by the {\em super}-commutator.

Let $[B,B]$ 
be 
the $\k$-linear span of the
set  $\{b_1b_2-(-1)^{pq} b_2b_1$,
$b_1\in B^p,b_2\in B^q,\,p,q\in\Z\}$. 
We put 
$B_\cyc:=B/[B,B]$.
Any (ungraded) algebra may be regarded as a graded
algebra concentrated in degree zero. Thus, for an
algebra $B$ without grading, we have the subspace
$[B,B]\sset B$ spanned by ordinary commutators
and the corresponding commutator quotient space
$B_\cyc=B/[B,B]$.

We write
 $T_B M=\oplus_{n\geq 0}\ T^n_BM$ for the tensor algebra of 
a $B$-bimodule $M$. Thus, $T_B^\hdot M$ is a graded associative
algebra with $T^0_BM=B$.

\subsection{Noncommutative differential forms}
Fix an algebra $A$ and a subalgebra $R\sset A$.  (After this section,
we will only take $R$ to be either $\k$ or $\k[t]$, but other
interesting examples include when $R = \k^I$ and $A$ is a quotient of
the path algebra of a quiver with vertex set $I$.) Let
$\Om^1_RA:=\Ker(m)$ be the kernel of the multiplication map $m:
A\otimes_R A\to A$, and write $i_\Delta: \Om^1_RA\into A\otimes_R A$
for the tautological embedding. Thus, $\Om^1_RA$ is a $A$-bimodule,
called the bimodule of {\em noncommutative one-forms} on the algebra $A$
relative to the subalgebra $R$.  One has a short exact sequence of
$A$-bimodules, see \cite[\S 2]{CQ1},
\begin{equation}\label{fund}
0\map\Om^1_RA\stackrel{i_\De}\too A\o_R A\stackrel{m}\too A\map 0.
\end{equation}

The assignment $a\mapsto \d a:=1\o a-a\o 1$ gives
 a canonical
derivation $\d:A\to \Om^1_RA$. This derivation is `universal'
in the sense
that, for any $A$-bimodule $M$, there is a canonical bijection
\begin{equation}\label{der_def}
\Der_R(A,M)\iso\Hom_{\bimod{A}}(\Om^1_RA, M),\quad
\th\mapsto i_\th,
\end{equation}
where  $i_\th:\Om^1_RA\to M$ stands for a $A$-bimodule map defined by
the formula $i_\th(u\,\d v):=u\cdot\th(v)$.

The map $\d$ extends uniquely to a degree 1 derivation
of $\Om_R A:=T_A^\hdot(\Om^1_RA)$, the tensor algebra
 of the 
$A$-bimodule  $\Om^1_RA$.
Thus,  $(\Om^\hdot_RA ,\d)$ is a DG algebra called
the  algebra of noncommutative  differential forms on $A$
relative to the subalgebra $R$
(we will interchangeably use the notation $\Om_R A$ or
$\Om^\hdot_RA $ depending on whether we want to emphasize the grading or
not).
For each $n\geq 1$, there is a standard
isomorphism of left $A$-modules,
see \cite{CQ1}, $\Om^n_RA=A\otimes_R T_R^n(A/R)$; usually, one writes
$a_0\,\d a_1\,\d a_2\ldots \d a_n\in \Om^n_RA$ for the $n$-form
corresponding to an element $a_0\o(a_1\o\ldots\o a_n)\in
A\otimes_R T_R^n(A/R)$ under this isomorphism.
The de Rham differential  $\d: \Om^\hdot_RA \to\Om^{\hdot+1}_RA$
is given by the formula
$\d: a_0\,\d a_1\,\d a_2\ldots \d a_n\mapsto
\d a_0\,\d a_1\,\d a_2\ldots \d a_n$.

Following Karoubi \cite{Ka}, we
define the (relative) noncommutative de Rham complex of
$A$ as
$$\DR_R  A:=(\Om_R A)_\cyc=\Om_R A/[\Om_R A,\Om_R A],$$
 the super-commutator quotient  of the
{\em graded} algebra $\Om^\hdot_RA $. The space $\DR_R A$ comes
equipped with a natural grading and
with the de Rham differential $\d: \DR^\hdot_R A\to \DR_R^{\hdot+1}A$,
induced from the one on $\Om^\hdot_RA $.
In degrees 0 and 1, $\DR^0_RA=A/[A,A]$ and
$\DR^1_RA=\Om^1_RA/[A,\,\Om^1_RA]$, respectively.

In the `absolute' case $R=\k$ we will use unadorned notation
$\Om^nA:=$ $\Om^n_\k A,
\ \DR A:=\DR_\k A$, etc.

The DG algebra $(\Om^\hdot_RA,\,\d)$ can be characterized by a universal
 property
saying that it is the universal DG $R$-algebra
generated by $A$ (see \cite[Corollary 2.2]{CQ1}).
The universal
property implies, in particular, for any algebras $A$ and $B$,
a canonical isomorphism 
\begin{equation}\label{AB}
\Om(A*B)\ \cong\ \Om A\, *\, \Om B.
\end{equation}

\subsection{Lie derivative and contraction for noncommutative differential forms} In this section, we introduce  operations
of Lie derivative and contraction on noncommutative 
differential forms.

Fix an algebra $A$ and a subalgebra $R\sset A$.
Any derivation $\th\in\Der_R A$ gives rise, naturally, to 
a pair of graded derivations of the
 graded algebra $\Om^\hdot_RA$, a contraction operation
$i_\th\in \Der_A^{\hdot-1}(\Om^\hdot_RA)$ and a  Lie derivative operation
$L_\th\in  \Der_R^\hdot(\Om^\hdot_RA)$, respectively.
To define these operations it is convenient to use the
following construction.

Let $K$ be a graded $\Om^\hdot_RA$-bimodule 
and  $f: \Om^1_RA\to K$ an $A$-bimodule map.
One shows, by
adapting  the proof of \cite[Proposition 2.6]{CQ1}
to a graded setting, that the assignment
$a \mto 0,\ \d a\mto f(\d a)$ can be extended uniquely
to an $A$-linear derivation $T^\hdot_A(\Om^1_RA) \to K$.
Hence, one has the following chain of canonical isomorphisms
\begin{equation}\label{K}
\Der^\hdot_R(A, K)\ \underset{{}^\eqref{der_def}}\iso\ 
\Hom^\hdot_{\bimod{A}}(\Om^1_RA,\ K)\ \iso \ 
\Der^{\hdot-1}_A(T^\hdot_A(\Om^1_RA),\ K)\
=\ \Der^{\hdot-1}_A(\Om_RA,\ K).
\end{equation}

Let $\th\in \Der^\hdot_R(A,K)$ be a graded derivation.  Applying to
$\th$ the composite isomorphism above one obtains a graded derivation
$i_\th\in \Der^{\hdot-1}_A(\Om_RA,\ K)$, called {\em contraction with}
$\th$.

To define contraction on differential forms, we put $K=\Om^\hdot_RA$. 
For any $\th\in
\Der_RA$, the image of $\th$ in 
$\Der^{\hdot-1}_A(T^\hdot_A(\Om^1_RA),\ K)$ above produces a graded derivation,
which we also call $i_\th$,
on $\Om^\hdot_R A = T^\hdot_A(\Om^1_RA)$ (in fact, it is an $A$-derivation,
not merely an $R$-derivation).  
For a one-form $u\,\d v$, we have
 $i_\th(u\,\d v)=u\cdot\th(v)$.

Next, one  defines a  Lie derivative
map $L_\th:  \Om^\hdot_RA\to \Om^\hdot_RA$
by the Cartan formula
$L_\th:=i_\th\, \d +\d\, i_\th$.
Here, the expression on the right hand side is the super-commutator
of $i_\th$, a graded derivation of degree $-1$,
with $\d$, a graded derivation of degree $+1$.
It follows that $L_\th$ is a degree 0 derivation
of the algebra $\Om^\hdot_RA$.
For a one-form $u\,\d v$ we have
 $L_\th(u\,\d v)=(\th(u))\,\d v+ u\,\d(\th(v))$.

Each of the maps $L_\th$ and $i_\th$  descends to a
well-defined operation on the de Rham complex
$\DR^\hdot_R A=(\Om_RA^\hdot)_\cyc$.

\subsection{Lie derivative and contraction for double derivations}
\label{li_sec} 
Write $\dder_R\a:=\Der_R(A,A\o A)$ for the vector space of 
 $R$-derivations $A\to A\o A$, which we refer to as ``double derivations.'' When $R=\k$ we write $\dder \a$.
Double derivations do {\em not}
give rise to  natural operations on the DG algebra
$\Om_R^\hdot \a$ itself.
Instead, for
any  double derivation
 $\Th: A\to A\o A$, we will
define associated contraction and Lie derivative operations,
which are 
double derivations $\Om_R A\to \Om_R A\o \Om_R A$.

To do so, we apply the general construction based on \eqref{K}
in the special case where
$K=\Om_R^\hdot A\o \Om_R^\hdot A$,
an $\Om_R^\hdot A$-bimodule with respect to the outer action.
For $\Th\in\dder_R \a$, we consider the composition  
$A\to A\o A= \Om_R^0 A\o \Om_R^0 A\into \Om_R^\hdot A\o\Om_R^\hdot A=K$.
This is  a degree zero derivation $A\to K$ so, using \eqref{K}, we obtain 
a contraction  map $\Om_R A\to\Om_R A\o \Om_R A$.
This map, to be denoted $\bbi_\Th$, is an $A$-linear graded derivation of degree $(-1)$.

In the special case $n=1$, the contraction $\bbi_\Th$  reduces to the map
$
i_\Th:\ \Om_R^1\a\to A\o A,\ \al\mapsto (i_\Th'\al)\o
(i_\Th''\al),
$
that corresponds to the derivation  $\Th\in\dder_R \a$
via  the  canonical bijection \eqref{der_def}.
More generally,
for $n\geq 1$, separating individual homogeneous components, 
the contraction gives maps
$$\bbi_\Th:\ \Om_R^n A\lto \bplus_{1\leq k\leq n}\
\Om_R^{k-1} A\o\Om_R^{n-k} A.$$
Explicitly,  for any 
 $\al_1,\ldots,\al_n\in \Om_R^1\a$, one finds
\begin{equation}\label{iformula}
\bbi_\Th(\al_1\al_2\ldots\al_n)\ =\
\sum_{1\leq k \leq n} (-1)^{k-1}\cd
\al_1\ldots\al_{k-1}\,(i_\Th'\al_k)\o (i_\Th''\al_k)\,
\al_{k+1}\ldots\al_n. 
\end{equation}

Next, we define the Lie derivative.
To this end, one first extends the de Rham
differential on $\Om_R A$ to a degree one map
$\d:\
\Om_R^\hdot A\o \Om_R^\hdot A\to \Om_R^\hdot A\o \Om_R^\hdot A$
defined, for any $\al\in \Om_R^p A$ and $\be\in\Om_R^q A$,
by the formula $\d(\al\o\be):=(\d\al)\o\be +(-1)^p \al\o(\d\be).$
Now, given $\Th \in\dder_R A$, we use Cartan's formula as a definition
and put
$\BL_\Th:=\bbi_\Th\,\d+\d\,\bbi_\Th$. This gives a  graded double derivation
$\BL_\Th:\ \Om_R A\to \Om_R A\o \Om_R A$ of degree $0$.

\section{Extended Karoubi-de Rham complex} \label{extkadrcplx}
\subsection{Cyclic quotients for free products}\label{grading} 
Given a graded algebra $B=\bplus_q\ B^q$, we
equip  $B_t=B*\k[t]$, a free product  algebra,
with a {\em bi}grading 
$B_t=\bplus_{p,q}\ B_t^{p,q}$
such that the homogeneous component $B^q$, of the
subalgebra $B\sset B_t$ is assigned
bidegree $(0,q)$ and the variable $t$
 is assigned
bidegree $(2,0)$.
Thus, the  $p$-grading counts twice the number
of occurrences of the variable $t$.

The bigrading on $B_t$ descends to a bigrading
 $(B_t)_\cyc= \bplus_{p,q}\ (B_t)^{p,q}_\cyc$ on the
super-commutator quotient. In particular,
 $(B_t)^{0,\hdot}_\cyc=B^\hdot_\cyc$.
Further, it is clear that the assignment
$u_1\,t\,u_2\mto (-1)^{|u_1|\cdot |u_2|}u_2\,u_1$ yields an isomorphism
$(B_t)^{2,\hdot}_\cyc \cong B$.
More generally, for any $n\geq 1$, the
space $(B_t)^{2n,\hdot}_\cyc$
is spanned by {\em cyclic} words
$u_1\,t\,u_2\,t\ldots t\,u_n\,t$, where cyclic means that, for
instance, one has
$u_1\,t\,u_2\,t\,u_3\,t=(-1)^{|u_1|(|u_2|+|u_3|)}t\,u_3\,t\,u_1\,t\,u_2$ (modulo
commutators).

Given a graded vector space $V$, we let  the group $\Z/n \Z$ act on
 $V^{\o n}=T^n_\k V$ by cyclic permutations tensored by
the sign character and write 
\[
V^{\o n}_\cyc:= V^{\o n}/(\Z/n\Z).
\]

Thus, the assignments
$u_1\,t\,u_2\,t\ldots t\,u_n\mto$
$u_1\o u_2\o\ldots \o u_n$ and
$u_1\,t\,u_2\,t\ldots t\,u_n\,t\mto$
$u_1\o u_2\o\ldots \o u_n$
yield
natural   vector
space  isomorphisms 
\begin{equation}\label{triv}
B^{2n-2,\hdot}_t\iso (B^\hdot)^{\o n}\quad\text{and}\quad(B_t)^{2n,\hdot}_\cyc\cong
(B^\hdot)^{\o n}\cyclic,\quad\forall n=1,2,\ldots. 
\end{equation}

Any graded derivation $\th: B\to B$ has a natural extension to 
a  graded derivation  $\th_t: B_t\to B_t$ that restricts to 
$\th$ on the subalgebra $B\sset B_t$ and sends $t$ to $0$.
Further, any double derivation $\Th: B\to B\o B$
gives rise to a unique derivation
$\Th_t:  B_t\to B_t$ such that
$\Th_t(b)=\Th'(b)\,t\,\Th''(b)$, for any $b\in B$,
and $\Th_t(t)=0$.

Now fix an ungraded algebra $B$. We   view it as a graded
algebra concentrated in degree zero. Then, the second grading 
on $B_t$ becomes trivial, so the bigrading  effectively
reduces to a grading $B_t=\bplus_p B_t^{2p}$, by even integers. Then,
we immediately deduce the following.

\begin{lem}\label{f_tlem} $\Der^p_{\k[t]}(B_t,\,B_t)=0$
for any $p<0$. 
The assignments $\th\mto \th_t$ and $\Th\mto\Th_t$,
yield  isomorphisms
$\Der(B,B)\iso\Der^0_{\k[t]}(B_t,\,B_t)$ and
$\dder(B,B)\iso\Der^2_{\k[t]}(B_t,\,B_t)$, respectively.
\end{lem}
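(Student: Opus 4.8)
The plan is to analyze $\Der^p_{\k[t]}(B_t, B_t)$ degree by degree, using the universal property of the free product $B_t = B * \k[t]$. Since $B$ is concentrated in degree zero, the only grading on $B_t$ is the $p$-grading, counting twice the number of occurrences of $t$, so $B_t = \bplus_{p \geq 0} B_t^{2p}$ with $B_t^0 = B$. A $\k[t]$-linear derivation $\f: B_t \to B_t$ annihilates $t$, hence is determined by its restriction $\f|_B : B \to B_t$, which must be a derivation of $B$ with coefficients in the $B$-bimodule $B_t$ (via the embedding $B \inj B_t$). Conversely, by the universal property of the free product (equivalently, of the tensor/path algebra structure), any such derivation $B \to B_t$ that we declare to kill $t$ extends uniquely to a $\k[t]$-linear derivation of $B_t$. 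So the first step is to record the canonical isomorphism $\Der^p_{\k[t]}(B_t, B_t) \iso \Der(B, B_t^{2p})$, where $B_t^{2p}$ is regarded as a $B$-bimodule by restriction.

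Next I would decompose the $B$-bimodule $B_t^{2p}$. Using the free product decomposition (cf. \eqref{AtA} and its higher analogue, or \eqref{triv}), one has $B_t^0 \cong B$ and $B_t^2 \cong B \o B$ (with the outer bimodule structure), and more generally $B_t^{2p} \cong B^{\o(p+1)}$ as a $B$-bimodule with $b(b_0 \o \cdots \o b_p)b' = (bb_0) \o b_1 \o \cdots \o (b_p b')$. For $p < 0$ the space $B_t^{2p}$ is zero, which immediately gives $\Der^p_{\k[t]}(B_t, B_t) = 0$ for $p < 0$. For $p = 0$, $\Der(B, B_t^0) = \Der(B, B) = \Der(B,B)$, and one checks that under the isomorphism $\f \mapsto \f|_B$ the derivation $\th_t$ corresponds to $\th$, giving $\Der(B,B) \iso \Der^0_{\k[t]}(B_t, B_t)$. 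For $p = 1$, $\Der(B, B_t^2) = \Der(B, B \o B) = \dder(B,B)$ by definition of double derivations, and one checks that $\Th_t|_B$ is exactly the map $b \mapsto \Th'(b) \o \Th''(b)$ under the identification $B_t^2 \cong B \o B$ sending $b' \, t \, b'' \mapsto b' \o b''$; this yields $\dder(B,B) \iso \Der^2_{\k[t]}(B_t, B_t)$.

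The main technical point — and the only place requiring genuine care — is justifying the extension statement: that a derivation $\de: B \to B_t$ with values in the sub-bimodule $B_t^{2p}$ extends uniquely to a $\k[t]$-linear \emph{graded} derivation of $B_t$ of degree $2p$ sending $t$ to $0$. This is where I would invoke the universal property of the free product as a coproduct in $\k$-algebras (used already in \S\ref{extkadrcplx} to define $\th_t$ and $\Th_t$), together with the standard fact that a derivation out of a free product is determined by, and can be prescribed arbitrarily as, a compatible pair of derivations out of the factors — here the prescribed derivation $\de$ on $B$ and the zero derivation on $\k[t]$. One must check that the resulting derivation of $B_t$ is homogeneous of the correct degree: this follows because $\de$ lands in $B_t^{2p}$, $t \mapsto 0$, and the Leibniz rule preserves the $p$-grading since $|t| = 2$ and multiplication in $B_t$ is additive in the grading. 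Uniqueness is clear since $B$ and $t$ generate $B_t$ as an algebra. The remaining verifications — that $\th \mapsto \th_t$ and $\Th \mapsto \Th_t$ are inverse to restriction-to-$B$, and that they are $\k$-linear — are routine Leibniz-rule bookkeeping, and I would not belabor them.
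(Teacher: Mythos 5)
Your proof is correct and follows exactly the route the paper treats as immediate: use the universal property of the free product $B_t = B * \k[t]$ to identify a $\k[t]$-linear graded derivation of $B_t$ with its restriction to $B$, observe that a degree-$n$ derivation thereby corresponds to a $B$-bimodule derivation $B \to B_t^{n}$, and then read off the answers from the bimodule identifications $B_t^0 \cong B$, $B_t^2 \cong B \o B$ (outer structure), and $B_t^n = 0$ for $n < 0$ or $n$ odd. One notational slip: you write the intermediate isomorphism as $\Der^p_{\k[t]}(B_t, B_t) \iso \Der(B, B_t^{2p})$, whereas the indices should match, i.e.\ $\Der^{n}_{\k[t]}(B_t, B_t) \iso \Der(B, B_t^{n})$; your subsequent conclusions (identifying $\Der^0$ and $\Der^2$, not $\Der^1$) use the correct indexing, so this is only a typo and not a gap.
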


\subsection{The extended de Rham complex}\label{OMT}
We are going to introduce an enlargement of the
noncommutative de Rham complex $\Om A$.
This  enlargement is a  DG algebra that has three equivalent
definitions according to the following lemma.

\begin{lem} There are  natural   algebra  isomorphisms
\begin{equation}\label{psi}
\Om_{\k[t]}(A_t)\,\cong\,
  (\Om A)*\k[t]\,\cong\, T_A(A^{\o 2}\oplus\oma).
\end{equation}

The differential on $T_A(A^{\o 2}\oplus\oma)$
obtained by transporting the de Rham differential on
$\Om_{\k[t]}(A_t)$ via the isomorphisms in \eqref{psi}
is the derivation induced by the composite
$i_\De\ccirc\d:\ A \to \Om^1A\into A\o A$ (using \eqref{fund}).
\end{lem}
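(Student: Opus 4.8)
The plan is to prove the three isomorphisms in \eqref{psi} using the universal property of the DG algebra of noncommutative differential forms, and then to compute the transported differential explicitly on generators.

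\emph{Step 1: the isomorphism $\Om_{\k[t]}(A_t) \cong (\Om A)*\k[t]$.} By \eqref{AB}, for any two algebras $A$ and $B$ we have $\Om(A*B) \cong \Om A * \Om B$. Applying this with $B = \k[t]$ gives $\Om(A*\k[t]) = \Om(A_t) \cong \Om A * \Om(\k[t])$. Now $\k[t]$ is a free (indeed smooth) commutative algebra, so $\Om^{\geq 2}(\k[t]) = 0$; more precisely $\Om(\k[t]) = \k[t] \oplus \k[t]\,\d t\cdot\k[t]$, but this is still larger than $\k[t]$. The point is that we want the \emph{relative} forms $\Om_{\k[t]}(A_t)$, i.e. forms over the subalgebra $\k[t] \subset A_t$; forming relative forms over $\k[t]$ exactly kills $\d t$. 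Concretely, the universal property of $\Om_R(-)$ (the universal property cited from \cite[Corollary 2.2]{CQ1}: $\Om_R B$ is the universal DG $R$-algebra generated by $B$) says that a DG $R$-algebra map out of $\Om_R(A_t)$ is the same as an $R$-algebra map out of $A_t$. Taking $R = \k[t]$ and using that $A_t = A * \k[t]$ is the coproduct of $A$ and $\k[t]$ over $\k$, a $\k[t]$-algebra map out of $A_t$ is just a $\k$-algebra map out of $A$; unwinding, $\Om_{\k[t]}(A_t)$ represents the same functor as $(\Om A) * \k[t]$ (the free product taken over $\k$, then viewed as a $\k[t]$-algebra), which gives the middle isomorphism. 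I would write this as a short Yoneda-style argument rather than manipulating words.

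\emph{Step 2: the isomorphism $(\Om A)*\k[t] \cong T_A(A^{\o 2}\oplus\oma)$.} Here $\Om A = T_A(\oma)$ is already a tensor algebra over $A$. Free product with $\k[t]$ over $\k$, re-expressed as an $A$-algebra, adjoins one new generator $t$ with no relations to $A$; the standard fact that $B * \k\langle t\rangle \cong T_B(B \o B)$ (adjoining a free generator to $B$ is the tensor algebra on the free $B$-bimodule of rank one, $B\o B$ with outer structure) applies with $B = \Om A$. Thus $(\Om A)*\k[t] \cong T_{\Om A}\big((\Om A)\o(\Om A)\big)$. Finally, a tensor algebra of a tensor algebra collapses: $T_{T_A M}\big(T_A M \o_A N \o_A T_A M\big)$-type identities, or more simply the universal property, identify $T_{\Om A}\big((\Om A)^{\o 2}\big) = T_{T_A(\oma)}(\cdots)$ with $T_A\big(\oma \oplus (A\o A)\big) = T_A(A^{\o 2}\oplus\oma)$, since generating over $A$ by the $A$-bimodule $\oma$ (giving $\Om A$) and then by a free rank-one bimodule $A\o A$ is the same as generating over $A$ by the direct sum $A^{\o 2}\oplus\oma$. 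I would phrase the last collapse again via the universal property (an $A$-algebra map out of either side is the data of an $A$-bimodule map from $\oma$ plus an $A$-bimodule map from $A\o A$, equivalently from the direct sum).

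\emph{Step 3: identifying the differential.} The de Rham differential $\d$ on $\Om_{\k[t]}(A_t)$ is the unique degree-one $\k[t]$-linear derivation with $\d(a) = \d a \in \Om^1_{\k[t]}(A_t)$ for $a \in A$ and $\d(t) = 0$. Under the composite isomorphism of \eqref{psi}, the generators of $T_A(A^{\o 2}\oplus\oma)$ split into the summand $\oma$, which maps to the usual one-forms $\d a$ (on which the transported differential must be the usual $\d$ of $\Om A$), and the summand $A^{\o 2}$, which corresponds to the elements $a' \o a'' \leftrightarrow a'\,t\,a''$, i.e. to the image of $t$ under left/right multiplication. Since a derivation is determined by its values on generators, it suffices to compute $\d$ on these two families. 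On $\oma$ nothing new happens. On $A\o A$: the element $1\o 1$ corresponds to $t$, and $\d t = 0$; but the bimodule $A\o A$ is \emph{generated over $A$} by $1\o 1$ together with the $A$-bimodule structure, and the differential, being a derivation, acts on $a\o a'' = a\cdot(1\o 1)\cdot a''$ by $\d(a) \cdot (1\o1)\cdot a'' \pm a \cdot (1\o 1)\cdot \d(a'')$ — wait, more carefully: in $\Om_{\k[t]}(A_t)$ the element corresponding to $a' \o a''$ is $a'\, t\, a''$, and $\d(a' t a'') = (\d a')\, t\, a'' + a'\, t\, (\d a'')$ using $\d t = 0$. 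The first and last terms lie again in the subalgebra generated by $\oma$ and $A\o A$, but there is a subtlety: when we write $a'\, t\, a''$ with $a', a'' \in A$, and we want $\d$ on the \emph{generator} $1\o1 \leftrightarrow t$ itself viewed inside $T_A(A^{\o2}\oplus\oma)$, we should not get $0$ — rather, the claim is $\d$ of the generator $A^{\o2}$ is the composite $i_\De \circ \d : A \to \Om^1 A \hookrightarrow A\o A$. The resolution: the $A$-bimodule generator of the $A^{\o2}$ summand is not a single element but the whole bimodule; the \emph{de Rham differential applied to a general element $a'\o a''$} is forced by the Leibniz rule once we know it on $1\o 1$, and $1\o1$ corresponds to $t = t\cdot 1 = 1 \cdot t$; but in $T_A(A^{\o2}\oplus\oma)$ the element "$1\o1$" sitting in degree... here is the real content: the grading. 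In \S\ref{grading} the variable $t$ was given bidegree $(2,0)$, so $t$ is a \emph{degree-two} element, hence $a' t a''$ is an element of $\Om^2_{\k[t]}(A_t)$ and its differential $\d(a' t a'') = (\d a') t a'' - a' t (\d a'')$ has degree three — no wait, $\d t = 0$ and $\d$ has degree one so $\d(a' t a'')$ has degree $0+2+0+1 = 3$; but actually we want the differential of the \emph{generator}. Let me restate cleanly: the generators $a'\o a'' \in A^{\o 2}$ of $T_A(A^{\o 2}\oplus\oma)$ correspond under \eqref{psi} to $a'\,t\,a'' \in \Om_{\k[t]}(A_t)$; since $\d$ is a $\k[t]$-linear derivation with $\d a = \d a$ and the forms $\d a$ lie in the $\oma$-summand, the only thing to check is the value of $\d$ on the bimodule-generator of $A^{\o 2}$, and tracking $t \mapsto 1\o 1$ through Step 2's identification shows that $\d$ sends $1\cdot t\cdot 1$ — wait, $\d(t)=0$. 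The correct statement the lemma is making: the \emph{induced derivation} on $T_A(A^{\o2}\oplus\oma)$, restricted to the summand $A^{\o 2}$, equals the composite $A \xrightarrow{\d} \Om^1 A \xrightarrow{i_\De} A\o A$ — i.e., for $a \in A$, $\d a \in \Om^1 A \subset T_A(\oma)$ gets sent under the identification into the $A^{\o2}$-summand via $i_\De$... I would resolve this by carefully writing out the isomorphism \eqref{psi} on the relevant low-degree pieces (using $\Om^n_R A = A\o_R T_R^n(A/R)$ and the splitting $A\o A \cong A \oplus \Om^1 A$ from \eqref{fund}), and then simply applying $\d$ to a generic element $a_0 t a_1 \cdots$, using $\d t = 0$ throughout, and reading off the result on the $A^{\o 2}$-generators.

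\emph{Main obstacle.} The real work is Step 3: keeping straight which copy of $A\o A$ carries which $A$-bimodule structure (the outer structure appears in two places — in \eqref{fund} for $\Om^1 A$, and in the definition of $T_A(A^{\o 2})$) and checking that the sign conventions from the super-commutator grading of \S\ref{grading} are consistent, so that the transported differential is \emph{exactly} the derivation induced by $i_\De \circ \d$ and not that composite up to a sign or a twist. Steps 1 and 2 are formal consequences of the universal properties already recorded in the excerpt (\eqref{AB} and \cite[Corollary 2.2]{CQ1}) and require essentially no computation. I expect the whole proof to be short once the bookkeeping in Step 3 is set up correctly; the cleanest route is to verify the formula for $\d$ on the two types of generators ($\d a$, $a\in A$, and $a't a''$) directly in $\Om_{\k[t]}(A_t)$, where $\d$ is already known, and transport.
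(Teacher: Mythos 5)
Your Steps 1 and 2 are correct and in the same universal-property spirit as the paper's proof, with two differences worth noting. For Step 1, the aside ``$\Om^{\geq 2}(\k[t])=0$'' is false for noncommutative differential forms ($\Om(\k[t])$ is a full tensor algebra over $\k[t]$, not a truncated exterior algebra), though you immediately recognize that the relevant point is that relative forms over $\k[t]$ kill $\d t$; the paper proves the first isomorphism more directly by observing that $\Om_{\k[t]}(A*\k[t])$ is the quotient of $\Om(A*\k[t])\cong(\Om A)*(\Om\k[t])$ by the two-sided ideal generated by $\d\k[t]$, which is visibly $(\Om A)*\k[t]$ --- no Yoneda-style argument is needed. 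For Step 2 you factor through $T_{\Om A}\bigl((\Om A)\o(\Om A)\bigr)$ using the fact that adjoining a free generator gives the tensor algebra on a rank-one free bimodule, and then collapse the iterated tensor algebra; the paper instead proves the slightly more general $(T_A M)_t\cong T_A(A^{\o 2}\oplus M)$ for an arbitrary $A$-bimodule $M$ directly, by exhibiting explicit mutually inverse $A$-algebra maps built from the universal properties of the tensor algebra and the free product, thus avoiding the iteration entirely. Both routes are valid and boil down to the same representability calculation. For Step 3, you correctly flag a genuine subtlety in parsing ``the derivation induced by $i_\De\ccirc\d$'' (the image of the generator of $A^{\o 2}$ under the transported $\d$ is constrained by $\d t=0$, and one must keep track of which summand of $A^{\o 2}\oplus\oma$ everything lands in), and your proposed resolution --- compute $\d(a'\,t\,a'')$ using $\d t=0$ and transport through the isomorphisms --- is the right approach; but you stop short of actually carrying out the calculation. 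Here you are in good company: the paper's proof explicitly declares this last verification ``left to the reader,'' so there is no argument for you to match, but a complete proposal should still include the few lines of bookkeeping.
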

\begin{proof} The algebra  $\Om_R(A*R)$ is the quotient of the algebra
$\Om(A*R)$ by the two-sided ideal generated by 
the space $\d R\sset \Om^1R\sset\Om^1(A*R)$.
Since  $\Om(A*R)\cong (\Om A)*(\Om R)$, by
\eqref{AB}, we deduce  a  DG algebra isomorphism
\begin{equation}\label{om*}\Om_R(A*R)\cong (\Om A)*R.
\end{equation}

Applying this in the special case 
$R=\k[t]$ yields the first isomorphism in \eqref{psi}.

To prove the second isomorphism in \eqref{psi}, fix  an $A$-bimodule $M$.
View
 ${A^{\o 2}}\oplus M$ as an $A$-bimodule. The assignment 
$(a'\o a'')\oplus m\mto a'ta''+m$ clearly gives an
$A$-bimodule map ${A^{\o 2}}\oplus M\to
(T_AM)_t$. This map can be extended, by the universal
property of the tensor algebra,
to an algebra morphism $T_A({A^{\o 2}}\oplus M)\to
(T_AM)_t$.
 To show that this morphism is an isomorphism,
we explicitly construct an inverse map as follows.

We start with a natural algebra embedding
$\dis f: T_AM\into T_A({A^{\o 2}}\oplus M)$,
induced by the $A$-bimodule embedding
$\dis M=0\oplus M\into{A^{\o 2}}\oplus M$.
Then, by the universal property of free products,
we can (uniquely) extend the map $f$ to an
algebra  homomorphism $(T_AM)_t=(T_AM)*\k[t]\to T_A({A^{\o 2}}\oplus M)$
by sending $t\, \mto \, 1_A\o 1_A\in A^{\o 2}\sset
T^1_A({A^{\o 2}}\oplus M)$.
It is straightforward to check that the resulting
homomorphism is indeed an inverse
of the homomorphism  constructed earlier.

Applying the above in the special case $M=\oma$ 
yields the second isomorphism  of the lemma.

The statement of the lemma concerning differentials is left
to the reader.
\end{proof}

 It is convenient to introduce a special notation
 $\Om_t A:=\Om_{\k[t]}(A_t)$. This algebra comes equipped with  a natural 
{\em bi}-grading
$\Om_t A=\oplus_{p,q\geq 0}\, \Om^{2p,q}_t A$, where
the even $p$-grading is induced
from the one on $A_t$, and the $q$-component
corresponds to the grading induced by the
natural one on $\Om^\hdot A$. It is easy to see that
the  $p$-grading corresponds,
under the isomorphism \eqref{psi}
to the grading on $(\Om A)*\k[t]$ that
counts twice the number
of occurrences of the variable $t$.

The {\em extended
de Rham complex} of $A$ is defined as a super-commutator quotient
$$
\DR_t A:=\DR_{\k[t]}(A_t)=(\Om_{\k[t]}(A_t))_\cyc\cong
((\Om A)_t)_\cyc.
$$
The bigrading on $\Om_t A$ clearly descends to
a  bigrading on the
extended de Rham complex of $A$.
The de Rham  differential has  bidegree $(0,1)$:
$$\DR_t A=\oplus_{p,q}\, \DR^{2p,q}_t A,\qquad
\d:\
 \DR_t^{2p,q} A\to\DR^{2p,q+1}_t A.
$$

Next, we
use the identification \eqref{triv} for $B:=\Om A$, and
 equip $(\Om^\hdot\a)^{\o p}$ with the tensor product grading
that counts the total degree of differential
forms involved, e.g., given $\al_i\in \Om^{k_i}A,\ i=1,\ldots,p$,
for $\al:=\al_1\o\ldots\o\al_p\in (\Om^{\,}\a)^{\o p}\cyclic$,
we put $\deg\al:=k_1+\ldots+k_p$. Then, we get
\begin{equation}\label{alternate}
\DR_t^{2p,q} A=
\begin{cases} 
\DR^q A &\text{if}\en p=0;\\
\text{degree $q$ component of}\en(\Om^\hdot\a)^{\o p}_\cyc
 &\text{if}\en p\geq 1.
\end{cases}
\end{equation}

\subsection{Operations on the extended de Rham complex}\label{dergr}
Let $\th\in\Der A$.
The Lie derivative  $L_\th$ is a
 derivation of the algebra $\Om^\hdot A$.
Hence, there is an associated
  derivation $(L_\th)_t:
(\Om^{\,}A)_t\to (\Om^{\,}A)_t$; see Lemma \ref{f_tlem}(ii).
On the other hand, we may first extend $\th$ to a derivation
$\th_t: A_t\to A_t$, and then consider the Lie derivative
$L_{\th_t}$, a derivation of the algebra
$\Om_{\k[t]}(A_t)=\Om_tA$,
 of bidegree
$(0,0)$. Very similarly, we also have
 graded derivations, $(i_\th)_t$ and
$i_{\th_t}$, of bidegree $(0,-1)$.

It is immediate to see that the 
two procedures above agree with each other in the
sense that, under the identification
$\Om_{\k[t]}(A_t)\cong(\Om^{\,}A)_t$
provided by \eqref{psi}, 
\begin{equation}\label{twoconstr}
L_{\th_t}=(L_\th)_t,\quad\text{and}\quad
i_{\th_t}=(i_\th)_t.
\end{equation}

Next, we consider operations induced by double derivations.
\begin{prop}\label{li} Any double derivation $\Th\in\dder A$
gives   a
canonical Lie derivative  operation, 
a graded derivation $L_\Th\in \Der_{\k[t]}(\Om_tA)$
of bidegree $(2,0)$, and a  contraction
 operation,  a graded 
derivation $i_\Th\in \Der_{\k[t]}(\Om_tA)$
of bidegree $(2,-1)$.
\end{prop}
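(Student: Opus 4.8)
The plan is to reduce the construction of $L_\Th$ and $i_\Th$ on $\Om_t A = \Om_{\k[t]}(A_t)$ to the already-established machinery of \S\ref{li_sec}, applied not to $A$ but to the algebra $A_t = A*\k[t]$. Recall from Lemma~\ref{f_tlem} that the assignment $\Th\mapsto\Th_t$ identifies $\dder(A,A) = \Der(A, A\o A)$ with $\Der^2_{\k[t]}(A_t, A_t)$, where the $\k[t]$-linear derivation $\Th_t\colon A_t\to A_t$ is characterized by $\Th_t(b) = \Th'(b)\,t\,\Th''(b)$ for $b\in A$ and $\Th_t(t)=0$. So $\Th_t$ is an \emph{ordinary} (single) $\k[t]$-linear derivation of $A_t$, of $p$-degree $2$. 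The point is that the constructions of \S\ref{dergr} for single derivations—the Lie derivative $L_{\th}$ and the contraction $i_\th$ on $\Om^\hdot_R A$, together with the Cartan formula $L_\th = i_\th\d + \d i_\th$—apply verbatim with the pair $(A_t, \k[t])$ in place of $(A, R)$. Thus I set
\[
L_\Th := L_{\Th_t}\ \in\ \Der_{\k[t]}(\Om_{\k[t]}(A_t)) = \Der_{\k[t]}(\Om_t A),
\qquad
i_\Th := i_{\Th_t}\ \in\ \Der_{\k[t]}(\Om_{\k[t]}(A_t)),
\]
the operations attached to the single $\k[t]$-linear derivation $\Th_t$ of $A_t$.

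It remains to check that these have the claimed bidegrees $(2,0)$ and $(2,-1)$. The $q$-grading is the standard de Rham form-degree grading on $\Om_{\k[t]}(A_t)$: $L_{\Th_t}$ preserves it because $L_\th$ is always a degree-$0$ derivation of $\Om^\hdot$, and $i_{\Th_t}$ lowers it by one because $i_\th$ is always a degree-$(-1)$ graded derivation; this is exactly the discussion preceding Proposition~\ref{li}. For the $p$-grading (which under \eqref{psi} counts twice the number of $t$'s): $i_{\Th_t}$ is the unique $A_t$-linear graded derivation of $\Om_{\k[t]}(A_t) = T^\hdot_{A_t}(\Om^1_{\k[t]}A_t)$ extending $i_{\Th_t}\colon\Om^1_{\k[t]}A_t\to A_t$, $u\,\d v\mapsto u\cdot\Th_t(v)$. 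Since $\Th_t$ raises $p$-degree by $2$ on $A_t$ and $i_{\Th_t}$ is $A_t$-linear, $i_{\Th_t}$ raises $p$-degree by $2$ on all of $\Om_t A$; and then $L_{\Th_t} = i_{\Th_t}\d + \d i_{\Th_t}$ inherits $p$-bidegree $2$ since $\d$ has $p$-degree $0$ (it is the derivation induced by $i_\De\circ\d\colon A\to A\o A\subset\Om_t A$, which involves no new $t$, cf.\ the lemma in \S\ref{OMT}).

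Finally, to connect this with the ``double derivation'' viewpoint advertised in \S\ref{li_sec}—that $L_\Th$ and $i_\Th$ should be the images, under the identification \eqref{triv}, of the double-derivation operations $\BL_\Th, \bbi_\Th\colon \Om A\to \Om A\o\Om A$ of \S\ref{li_sec}—I would record the compatibility: under the isomorphism $\Om_t A\cong(\Om A)_t$ of \eqref{psi} and the isomorphisms \eqref{triv} identifying the $p$-graded pieces of $(\Om A)_t$ with tensor powers $(\Om^\hdot A)^{\o n}$, the single-derivation operations $i_{\Th_t}$ and $L_{\Th_t}$ restrict on each graded piece to the maps built from $\bbi_\Th$ and $\BL_\Th$ by the explicit formula \eqref{iformula} (a form $\al_1\cdots\al_n$ gets sent to the alternating sum of $\al_1\cdots\al_{k-1}\,(i_\Th'\al_k)\,t\,(i_\Th''\al_k)\,\al_{k+1}\cdots\al_n$). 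This follows because both sides are $A$-linear graded derivations out of $T^\hdot_A(\oma)$ agreeing on generators $\d v\mapsto (i_\Th'\d v)\,t\,(i_\Th''\d v)$ and on $A$, hence coincide by the uniqueness clause in \eqref{K}; the Lie-derivative statement then follows from Cartan's formula.

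The routine checks are the two uniqueness/universality arguments; the one genuinely load-bearing point—and the step I expect to require the most care—is verifying that $i_{\Th_t}$ really raises the $p$-grading by exactly $2$ rather than by a variable amount, i.e.\ that $A_t$-linearity of the extended contraction together with $\Th_t$ being homogeneous of $p$-degree $2$ forces $p$-homogeneity of degree $2$ globally; this is where one must be careful that the $p$-grading on $\Om_{\k[t]}(A_t)\cong(\Om A)*\k[t]$ is genuinely the $t$-counting grading and is respected by $\d$.
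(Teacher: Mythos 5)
Your proposal takes essentially the same route as the paper: extend $\Th$ to the derivation $\Th_t\in\Der^2_{\k[t]}(A_t,A_t)$ via Lemma~\ref{f_tlem}, then apply the standard (single-derivation) Lie derivative and contraction constructions for the pair $(A_t,\k[t])$ to produce $L_{\Th_t}$ and $i_{\Th_t}$ on $\Om_{\k[t]}(A_t)=\Om_tA$. The paper's proof is terser and leaves the bidegree bookkeeping implicit; your more careful verification that $\Th_t$ being $p$-homogeneous of degree $2$ propagates through the universal extension and the Cartan formula is correct and does no harm. Your final paragraph relating $L_\Th,i_\Th$ back to $\BL_\Th,\bbi_\Th$ of \S\ref{li_sec} is not part of the proof in the paper but matches the subsequent Remark~\ref{twodefs}.
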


\begin{proof} Given  $\Th\in \dder{A}$,  we  first extend it
to a free product derivation $\Th_t: A_t\to A_t$,
as in \S\ref{grading}. Hence, there are associated
  Lie derivative $L_{\Th_t}$, and contraction
$i_{\Th_t}$,  operations  on the complex $\Om_{\k[t]}(A_t)$,
of  relative differential forms on the algebra $A_t$.
Thus, we may use \eqref{triv} to interpret 
 $L_{\Th_t}$ and
$i_{\Th_t}$ as operations on  $\Om_tA$, to be denoted by  $L_\Th$ and $i_\Th$,
respectively. \end{proof}

\begin{rem}\label{twodefs}
One can use the above defined actions of  double derivations on $\Om_tA$ to obtain
an alternative construction of the operations  introduced in \S\ref{li_sec}.
Specifically, we observe that  the operations $L_{\Th_t}$ and
$i_{\Th_t}$
may be viewed, thanks to the first isomorphism in \eqref{psi},
as  derivations of the algebra  $(\Om A)_t$ of degree  $2$ with respect to the
$p$-grading, that counts
 (twice) the number of occurrences of $t$.
Hence, applying Lemma \ref{f_tlem}, we conclude that
there exists a  unique double
derivation $\BL_\Th:  \Om^{\,}\a\to \Om^{\,}\a\o \Om^{\,}\a$
such that,
for the corresponding map $(\Om^{\,}\a)_t\to(\Om^{\,}\a)_t$, we have
$\dis 
L_{\Th_t}=(\BL_\Th)_t$. A similar argument
yields a double derivation $\bbi_\Th:  \Om^{\,}\a\to \Om^{\,}\a\o
\Om^{\,}\a$
such that $i_{\Th_t}=(\bbi_\Th)_t$. It is easy to check that
the derivations $\BL_\Th$ and $\bbi_\Th$ thus defined 
are the same as those introduced  in \S\ref{li_sec}
in a different way.
\erem

Both the Lie derivative and contraction operations on $\Om_tA$ 
descend to the commutator quotient. This way, we obtain the Lie derivative
 $L_\Th$ and the contraction
$i_\Th$  on $\DR_t A$, the extended de Rham complex.
Explicitly, using isomorphisms \eqref{alternate},
we can write the Lie derivative
 $L_\Th$ and  contraction
$i_\Th$ operations
 as 
chains of maps 
of the form
\begin{equation}\label{chainDR}
\DR A\too \Om^{\,}\a\too   
(\Om^{\,}\a)^{\o 2}\cyclic \too 
(\Om^{\,}\a)^{\o 3}\cyclic \too \ldots.
\end{equation}

 There are  some standard identities involving the Lie derivative 
 and contraction operations on $\Om A$ associated with ordinary derivations. Similarly,
the Lie derivative 
 and contraction operations on $\Om_tA$ resulting from Proposition
 \ref{li} satisfy the following identities:
\begin{equation}\label{relations}
L_\Th=\d\ccirc i_\Th+i_\Th\ccirc \d,
\quad
i_\Th\ccirc i_\Phi+ i_\Phi\ccirc i_\Th=0,
\quad i_\xi\ccirc i_\Th+i_\Th\ccirc i_\xi=0,
\quad\forall\Th, \Phi\in\dder\a,\ \xi\in\Der A.
\end{equation}
It follows, in particular, that the Lie derivative $L_\Th$
commutes with the de Rham differential $\d$.

To prove \eqref{relations}, one first verifies these identities
on the generators of the algebra $\Om_t A=(\Om^{\,}\a)_t$,
that is, on differential forms of degrees $0$ and $1$,
which is a simple computation.
The general case then follows by observing
that any commutation relation between (graded)-derivations
that holds on generators of the algebra holds true for
all elements of the algebra.

It is immediate that the induced operations 
on $\DR_t A$ also satisfy  \eqref{relations}.

\subsection{Reduced Lie derivative and contraction}\label{red_sec}
The second component
(the $q$-component) of
the 
bigrading on  $\DR_tA$ induces
a grading
on each of the  spaces $\dis(\Om^{\,}\a)^{\o k}\cyclic,\,k=1,2,\ldots$,
appearing in \eqref{chainDR}.
Each of the maps in \eqref{chainDR}
corresponding to  the  Lie derivative
induced by a double derivation
 $\Th\in\dder A$
preserves the   $q$-grading.
In the case of  contraction with $\Th$,
all maps in the corresponding chain  \eqref{chainDR}
decrease  the  $q$-grading by one.

The  leftmost map in \eqref{chainDR}, to be denoted $\bi_\Th$
in the contraction case and
$\bl_\Th$ in the Lie derivative case,
will be especially important for
us. 
These maps,  which we will call the  {\em reduced contraction}
and {\em reduced Lie derivative}, respectively, have the form
\begin{equation}\label{i_explicit}
\bi_\Th: \ \DR^\hdot A\map \Om^{\hdot-1}A,
\quad\text{and}\quad
\bl_\Th: \
\DR^\hdot A\map \omb.
\end{equation}
Explicitly, we see from \eqref{iformula} that the operation $\bi_\Th$,
for instance,  is given,
for any $\al_1,\al_2,\ldots,\al_n\in\Om^1 A$, 
by the following  formula:
\begin{align}\label{bi_formula}
\bi_\Th(\al_1&\al_2\ldots\al_n)=
\sum_{k=1}^n
(-1)^{(k-1)(n-1)}\cd  (i_\Th''\al_k)\cd
\al_{k+1}\ldots\al_n\,\al_1\ldots\al_{k-1}\cd (i_\Th'\al_k).
\end{align}

An {\em ad hoc}  definition of the maps in \eqref{i_explicit} via
 explicit formulas like \eqref{bi_formula} was first given in \cite{CBEG}.
Proving the commutation relations
 \eqref{relations} using  explicit formulas is, however, very
 painful; 
this was carried out
in  \cite{CBEG} by rather long brute-force computations.
Our present approach based on the free product construction
yields the  commutation relations  for free.

\subsection{The derivation $\Delta$}\label{vdb}
There is a distinguished double
derivation 
$$\Delta: A\to A\otimes A,\quad
a\mapsto 1\o a-a\o 1.
$$

The corresponding contraction map
$i_\Delta: \oma\to  A\otimes A$ is the tautological
embedding \eqref{fund}. Furthermore, the derivation
$\Delta_t: A_t\to A_t$ associated with $\Delta$, cf
\S\ref{grading},
equals $\ad t: u\mapsto t\cdot u-u\cdot t$.
Hence, the Lie derivative map
$L_\Delta: \Om_tA\to\Om_tA$ reads
$\om\mapsto \ad t(\om):=t\cdot \om-\om\cdot t$.

\begin{lem}\label{biDelta} \vi For any $a_0,a_1,\ldots, a_n\in A$, 
\begin{align*}
\bi_\Delta(a_0\,\d a_1\ldots \,\d a_n)
=\sum_{1\leq k \leq n}(-1)^{(k-1)(n-1)+1}\cd
[a_k,\ \d a_{k+1}\ldots \d a_n\,a_0\,\d a_1\ldots \d a_{k-1}].
\end{align*}

 \vii In  $\Om^\hdot A$, we have 
$\dis
\bi_\Delta\ccirc \d+\d\ccirc \bi_\Delta=0$
and $\d^2=(\bi_\Delta)^2=0;$ similar equations
also hold in
 $\DR_t^\hdot A$, with $i_\Delta$ in place of
$\bi_\Delta$.
\end{lem}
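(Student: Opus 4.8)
The plan is to derive both parts of Lemma~\ref{biDelta} from the general theory already developed, specializing $\Th=\Delta$, rather than by brute-force computation. For part~(i), I would start from the explicit formula \eqref{bi_formula} for $\bi_\Th$ applied to a product of one-forms, and substitute the specific contraction maps for $\Delta$. By the description in \S\ref{vdb}, the map $i_\Delta:\Om^1A\to A\o A$ is the tautological embedding $i_\De$ of \eqref{fund}, so on an exact one-form we have $i_\Delta(\d a)=1\o a-a\o 1$, i.e. $i'_\Delta(\d a)=1$, $i''_\Delta(\d a)=a$ for the ``boundary'' piece and $i'_\Delta(\d a)=a$, $i''_\Delta(\d a)=1$ for the other, with a sign. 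More precisely, writing $\al_k=\d a_k$ (the general one-form $a_0\,\d a_1\ldots\d a_n$ reduces to this after absorbing $a_0$), formula \eqref{bi_formula} gives a sum over $k$ of terms $(-1)^{(k-1)(n-1)}(i''_\Delta\al_k)\cdot(\text{cyclic tail})\cdot(i'_\Delta\al_k)$; plugging in $i_\Delta(\d a_k)=1\o a_k-a_k\o 1$ produces exactly the commutator $[a_k,\ \d a_{k+1}\ldots\d a_n\,a_0\,\d a_1\ldots\d a_{k-1}]$ up to the sign $(-1)^{(k-1)(n-1)+1}$. The one point requiring care is tracking where the extra overall sign $(-1)$ and the correct placement of $a_0$ come from; this is the bookkeeping that \eqref{bi_formula} was set up to handle, so it is routine but must be done carefully.

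For part~(ii), I would deduce the identities from the commutation relations \eqref{relations} together with the identification of $\bi_\Delta$ and $\d$ as the leftmost ($q$-grading-lowering, resp. preserving) components of the chain maps \eqref{chainDR}. Taking $\Th=\Phi=\Delta$ in the second relation of \eqref{relations} gives $i_\Delta\ccirc i_\Delta+i_\Delta\ccirc i_\Delta=0$, i.e. $2\,i_\Delta^2=0$, hence $i_\Delta^2=0$ on $\DR_tA$ (char.\ $0$), and passing to leftmost components yields $(\bi_\Delta)^2=0$ on $\Om^\hdot A$. For the anticommutator with $\d$: the first relation of \eqref{relations} says $L_\Delta=\d\ccirc i_\Delta+i_\Delta\ccirc\d$ on $\DR_tA$; but by \S\ref{vdb} the Lie derivative $L_\Delta$ is induced by $\ad t$, which acts as zero on the \emph{commutator quotient} $\DR_tA=(\Om_tA)_\cyc$. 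Hence $\d\ccirc i_\Delta+i_\Delta\ccirc\d=0$ on $\DR_tA$. Restricting to the leftmost component (using that $\d$ preserves, and $\bi_\Delta$ lowers by one, the $q$-grading, as recorded in \S\ref{red_sec}) gives $\bi_\Delta\ccirc\d+\d\ccirc\bi_\Delta=0$ on $\Om^\hdot A$. The statement $\d^2=0$ on $\Om^\hdot A$ is just the standard fact that the de Rham differential is a differential, so nothing new is needed there.

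The main obstacle I anticipate is not conceptual but organizational: making sure the passage ``from operations on $\DR_tA$ to their leftmost components on $\Om^\hdot A$'' is legitimate for the quadratic identities, since $i_\Delta^2$ and $i_\Delta\d+\d i_\Delta$ a priori mix several components of the chain \eqref{chainDR}. The point is that, by \S\ref{red_sec}, $\d$ preserves the $q$-grading and each component of $i_\Delta$ in \eqref{chainDR} lowers it by one; chasing through the bigrading one sees that the component of $i_\Delta^2$ (resp.\ $i_\Delta\d+\d i_\Delta$) landing in the lowest ``tensor-power'' slot is precisely $(\bi_\Delta)^2$ (resp.\ $\bi_\Delta\d+\d\bi_\Delta$), so vanishing on all of $\DR_tA$ forces vanishing of the reduced operator on $\Om^\hdot A$. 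Alternatively, and perhaps more cleanly, one can verify part~(ii) directly on $\Om^\hdot A$ by checking the identities on algebra generators (degree $0$ and degree $1$ forms), exactly as was done for \eqref{relations}, and then invoking the standard principle that a (super)commutation relation between graded derivations which holds on generators holds everywhere. I would present part~(i) via the explicit formula and part~(ii) via the generator-checking principle, noting in a remark that \eqref{relations} already contains the statement.
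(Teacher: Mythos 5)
Your treatment of part (i) via the explicit formula \eqref{bi_formula} and of the anticommutator $\bi_\Delta\ccirc\d+\d\ccirc\bi_\Delta=0$ via the identity $L_\Delta=\ad t$ (hence $L_\Delta$ vanishes on $\DR_t A$, hence $\bl_\Delta=0$, hence the Cartan relation forces the anticommutator to vanish) is essentially the paper's argument and is fine. In particular, the grading chase you describe is legitimate in that case: since $\d$ has bidegree $(0,1)$ and $i_\Delta$ has bidegree $(2,-1)$, the $p=0\to p=1$ component of $\d\ccirc i_\Delta+i_\Delta\ccirc\d$ acting on $\DR_tA$ really is $\d\ccirc\bi_\Delta+\bi_\Delta\ccirc\d$.

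The gap is in your argument for $(\bi_\Delta)^2=0$. It is \emph{not} true that the lowest tensor-power component of $i_\Delta^2$ on $\DR_tA$ is $(\bi_\Delta)^2$. The $p=0$ component of $i_\Delta^2$ is a map $\DR A\to(\Om A)^{\o 2}_\cyc$ built from the leftmost map $\bi_\Delta:\DR A\to\Om A$ followed by the \emph{second} map $\Om A\to(\Om A)^{\o 2}_\cyc$ in the chain \eqref{chainDR}. That second map is the unreduced $\bbi_\Delta$ (post-composed with the cyclic quotient), not the reduced $\bi_\Delta$ again. By contrast $(\bi_\Delta)^2$ is the composite $\DR A\stackrel{\bi_\Delta}\to\Om A\onto\DR A\stackrel{\bi_\Delta}\to\Om A$; it lands in $\Om A$, not in $(\Om A)^{\o 2}_\cyc$. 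These are genuinely different operators, and knowing that $\bbi_\Delta\ccirc\bi_\Delta$ vanishes modulo the cyclic relation does not by itself yield $\bi_\Delta\ccirc\bi_\Delta=0$: if you try to recover $\bi_\Delta$ from $\bbi_\Delta$ by the "reversed-multiplication" map $\mu_{21}(x\o y)=(-1)^{|x||y|}yx$, note that $\mu_{21}$ does \emph{not} factor through the cyclic quotient (it sends $x\o y-(-1)^{|x||y|}y\o x$ to $-[x,y]$, not $0$), so all you can conclude from your argument is that $(\bi_\Delta)^2$ takes values in $[\Om A,\Om A]$, which is weaker than vanishing in $\Om A$.

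Your fallback, "check the identities on generators as for \eqref{relations}," also fails here, because the generator-checking principle applies to graded \emph{derivations}, and $\bi_\Delta$ is not a derivation of $\Om^\hdot A$ — it is a map factoring through the cyclic quotient $\DR^\hdot A$. The paper closes this gap by a different, more direct route: the explicit formula of part (i) exhibits the image of $\bi_\Delta$ as lying inside $[A,\Om A]\subset[\Om A,\Om A]$, so composing with the projection $\Om A\onto\DR A$ kills it, and hence $(\bi_\Delta)^2=0$. You need some version of that observation; the relation $i_\Delta^2=0$ on $\DR_tA$ alone is not enough.
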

\begin{proof} Part (i) is verified
by a straightforward computation based on 
formula \eqref{iformula}.
We claim next that, in $\Om_tA$, we have $L_\De=\ad t$.
Indeed, it suffices to check this equality  on the generators
of the algebra $\Om_t A$. It is clear that
$L_\De(t)=0=\ad t(t)$, and it is easy to see
that both derivations agree on zero-forms and on one-forms.
This proves the claim. 

The equation $\bi_\Delta\ccirc \d+\d\ccirc \bi_\Delta=0$,
of part (ii) of the lemma, now follows
 by the Cartan formula on the left of \eqref{relations}, since
the equation  $L_\De=\ad t$ clearly implies that
the map $L_\De: \DR_t A\to\DR_t A$, as well as the map  $\bl_\Delta$,
vanishes. Finally, the formula of part (i) shows that
the image of the map $\bi_\Delta$ is contained in $[A, \Om A]$.
Hence, we deduce $(\bi_\Delta)^2(\Om A)\sset
\bi_\Delta([A, \Om A])=0$, since the map $\bi_\Delta$ vanishes on
commutators.
\end{proof}

 Let $A_\tau:=A*\k[\tau]$ 
be the graded algebra
such that $A$ is placed in degree zero and  $\tau$ is an {\em odd}
variable placed in degree $1$. Let $\frac{d}{d\tau}$ be the degree $-1$
derivation of the algebra $A_\tau$ that annihilates $A$ and
satisfies $\frac{d}{d\tau}(\tau)=1$. Similarly, let 
$\tau^2\frac{d}{d\tau}$  be the degree $+1$ graded
derivation of the algebra $A_\tau$ that annihilates $A$ and satisfies
$\tau^2\frac{d}{d\tau}(\tau)=\tau^2$. For any homogeneous element
$x\in A_\tau$, put $\ad\tau(x):=\tau x-(-1)^{|x|}\tau;$ in particular,
one finds that $\ad\tau(\tau)=2\tau^2$.

It is easy to check that each of the derivations
$\frac{d}{d\tau},\
\tau^2\frac{d}{d\tau}$, and $\ad\tau-\tau^2\frac{d}{d\tau}$ squares to
zero.
\begin{claim} \vi {\em The following assignment gives a graded algebra embedding:
$$
j:\ \Om_t A\ \into\ A*\k[\tau],\quad t\mto \tau^2,\quad a_0\,\d a_1\ldots\d a_n\mto
a_0\cd[\tau,a_1]\cd\ldots\cd[\tau,a_n],
$$
Moreover, the above map  intertwines the contraction operation ${\mathbf
i}_\Delta$ with the differential $\tau^2\frac{d}{d\tau}$, and
 the Karoubi-de Rham differential $\d$ with  
the differential $\ad\tau-\mbox{$\tau^2\frac{d}{d\tau}$}$.

\vii The image of the map $j$ is annihilated by the derivation
$\frac{d}{d\tau}$.

\viii The complex $\big((A_\tau)_\cyc,\ \frac{d}{d\tau}\big)$ computes
cyclic homology of the algebra $A$.}\qed
\end{claim}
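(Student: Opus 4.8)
The strategy is to prove the three parts of the Claim in order, with part (i) doing most of the work. For part (i), I would first check that $j$ is well-defined as a map out of the free algebra on $A$ and $t$: the target $A*\k[\tau]$ receives $t\mapsto\tau^2$ (degree $2$, matching the $p$-bigrading convention where $t$ has bidegree $(2,0)$) and receives each one-form $\d a\mapsto[\tau,a]=\tau a-(-1)^{|a|}a\tau$ (here $a$ has degree $0$, so this is $\tau a - a\tau$, of degree $1$). One must check the relations that cut $\Om_tA=\Om_{\k[t]}(A_t)$ out, principally the Leibniz rule $\d(a_1a_2)=(\d a_1)a_2+a_1(\d a_2)$, which corresponds to the identity $[\tau,a_1a_2]=[\tau,a_1]a_2+a_1[\tau,a_2]$ in $A*\k[\tau]$ — immediate since $\ad\tau$ is a derivation. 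One also checks $\d(1)=0\mapsto[\tau,1]=0$, and that the $A$-bimodule structure on $\Om^1A$ is respected, i.e.\ $(b\,\d a\,c)\mapsto b[\tau,a]c$. That $j$ is an embedding (injective) is the one genuinely substantive point: I would verify it using the normal form for elements of $\Om_tA$. Via the isomorphism $\Om_tA\cong(\Om A)*\k[t]\cong T_A(A^{\o2}\oplus\Om^1A)$ of the lemma in \S\ref{OMT} and the identification \eqref{triv}, a general element is a $\k$-combination of words alternating between differential-form monomials $a_0\,\d a_1\cdots\d a_k$ and occurrences of $t$; under $j$ these map to alternating words in $A$ and $\tau$, and one reads off injectivity from the PBW-type basis of $A*\k[\tau]=A\oplus A\tau A\oplus A\tau A\tau A\oplus\cdots$ (using that $\tau$ is odd, so $\tau^2$ is central-looking only in the sense of degree, but $A\ni a$ and $\tau$ generate freely over $\k$).

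Next, within part (i), I would verify the two intertwining statements. For $\bi_\Delta\leftrightarrow\tau^2\tfrac{d}{d\tau}$: by Lemma \ref{biDelta}, $i_\Delta$ on $\Om_tA$ is the contraction associated to the double derivation $\Delta:a\mapsto1\o a-a\o1$, whose extension to $A_t$ is $\ad t$; under $j$ this should become $\tau^2\tfrac{d}{d\tau}$. It suffices (by the remark that commutation relations among graded derivations need only be checked on algebra generators) to check equality on $t$ and on one-forms $\d a$: on $t=\tau^2$ we need $\tau^2\tfrac{d}{d\tau}(\tau^2)$ to match $j(i_\Delta(t))$; since $i_\Delta$ kills $t$ (it lowers $q$-degree and $t$ has $q$-degree $0$), and $\tfrac{d}{d\tau}(\tau^2)=\tfrac{d}{d\tau}(\tau\tau)=\tau-\tau=0$ (odd variable), both sides vanish. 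On $\d a$: $i_\Delta(\d a)=i_\Delta'(\d a)\o i_\Delta''(\d a)=1\o a - \dots$, which in the $A_t$-picture is $1\cdot t\cdot a - \dots$; on the other side $\tau^2\tfrac{d}{d\tau}([\tau,a])=\tau^2\tfrac{d}{d\tau}(\tau a-a\tau)=\tau^2 a - (\text{degree sign})\,a\tau^2\cdot 0$... I would carefully match these using $\tfrac{d}{d\tau}(\tau a)=a$, $\tfrac{d}{d\tau}(a\tau)=(-1)^{|a|}a=a$. For $\d\leftrightarrow\ad\tau-\tau^2\tfrac{d}{d\tau}$: again check on generators, $\d t=0$ (indeed $t$ is central in $A_t$, and on the target $(\ad\tau-\tau^2\tfrac{d}{d\tau})(\tau^2)=\ad\tau(\tau^2)-\tau^2\tfrac{d}{d\tau}(\tau^2)=2\tau^3-\tau^2\cdot0$... here I'd use $\ad\tau(\tau^2)=\tau\cdot\tau^2-(-1)^{2}\tau^2\cdot\tau=0$ since $\tau^3=\tau^3$; need to be careful about signs with the odd variable), and on $a\in A$: $\d a\mapsto[\tau,a]$ while $(\ad\tau-\tau^2\tfrac{d}{d\tau})(a)=[\tau,a]-0=[\tau,a]$ since $\tfrac{d}{d\tau}$ kills $A$.

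For part (ii): $\tfrac{d}{d\tau}$ annihilates $A$ by definition and sends $\tau\mapsto1$; so on the image $j(\Om_tA)$, which is generated by $A$ and by $\tau^2$ and by $[\tau,a]$, we compute $\tfrac{d}{d\tau}(\tau^2)=0$ (shown above) and $\tfrac{d}{d\tau}([\tau,a])=\tfrac{d}{d\tau}(\tau a)-\tfrac{d}{d\tau}(a\tau)=a-(-1)^{|a|}a=a-a=0$; since $\tfrac{d}{d\tau}$ is a graded derivation, it kills the subalgebra generated by these, i.e.\ the image of $j$. For part (iii): the complex $\big((A_\tau)_\cyc,\tfrac{d}{d\tau}\big)$ — note $A_\tau=A*\k[\tau]$ with $\tau$ odd, so as a graded vector space $(A_\tau)_\cyc=\bigoplus_{n\ge0}(A^{\o n})\cyclic$ in nonnegative degrees via \eqref{triv} (with $B=A$ ungraded, though here the grading roles differ since $\tau$ has degree $1$ not $2$ — I'd need to set up the $\tau$-version of \eqref{triv} and \eqref{grading}); under this identification $\tfrac{d}{d\tau}$ becomes exactly the Connes $b$ (or $B$) operator, and I would cite that the resulting complex is the standard (reduced) cyclic complex computing $HC_\bullet(A)$ — this is precisely the Cuntz–Quillen/Connes description, so I'd point to \cite{CQ1} or the classical literature rather than reprove it.

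\textbf{Main obstacle.} The crux is the injectivity of $j$ in part (i): one must correctly handle the normal form of $\Om_tA$ — alternating words of form-monomials and $t$'s, modulo nothing further — and match it against the free-product basis of $A*\k[\tau]$, keeping scrupulous track of the Koszul signs coming from $\tau$ being odd (so that, e.g., $\tau^2\ne0$ but $[\tau,-]$ and $\tfrac{d}{d\tau}$ carry the expected signs). Everything else is a generators-only check licensed by the principle that derivation identities propagate from generators, plus a citation for the classical identification in part (iii).
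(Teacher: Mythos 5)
The paper does not prove this Claim. Immediately after stating it, the authors write: ``We will neither use nor prove this result; cf.~[CQ1, Proposition 1.4] and [KS, \S 4.1 and Lemma 4.2.1].'' So there is no in-paper argument against which to compare your proposal; the Claim is asserted as a remark with a literature pointer, and the $\qed$ marks it accepted rather than proved.

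Against that backdrop, your outline is largely sound as a blueprint for a self-contained verification. The generator-level calculations you set up for the two intertwining statements in (i) and for part (ii) are correct (and you rightly catch yourself that $\ad\tau(\tau^2)=\tau^3-(-1)^{2}\tau^3=0$, not $2\tau^3$), and you invoke the right principle that identities among graded derivations of an algebra need only be checked on generators.

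Two points, however, are genuinely incomplete in the proposal. First, injectivity of $j$. Comparing normal forms is the right idea, but the argument as sketched stops short of a proof. The target $A*\k[\tau]$ is graded by the number of $\tau$'s, with degree-$n$ part $\cong A^{\otimes(n+1)}$, while a single degree $n$ receives contributions from all bigraded pieces $\Om^{2p,q}_tA$ with $2p+q=n$. On $\Om^1A$ the map $j$ is exactly $i_\Delta:\Om^1A\hookrightarrow A^{\otimes2}$ (injective) and on $AtA$ it is $a'\otimes a''\mapsto a'\otimes1\otimes a''$ (injective), but you still have to show that the images of the various tensor words $M_1\otimes_A\cdots\otimes_A M_k$ (with $M_i\in\{A^{\otimes2},\Om^1A\}$ and a fixed total degree) remain linearly independent inside $A^{\otimes(n+1)}$; this mixing of weights is precisely where the content lies, and your sketch does not yet resolve it. Second, part (iii). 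The identification of $(A_\tau)_\cyc$ with a cyclic complex needs more care than a citation. For $n\geq1$, the degree-$n$ piece of $(A_\tau)_\cyc$ is $A^{\otimes n}$ modulo the signed cyclic action, which is the Connes space $C^\lambda_{n-1}(A)$ (note the shift), while the degree-$0$ piece is $A/[A,A]$, which does not fit the pattern. Tracking $\frac{d}{d\tau}$, the differential from degree $1$ to degree $0$ is the projection $A\onto A/[A,A]$, so $H_0=H_1=0$ and $H_n\cong HC_{n-1}(A)$ for $n\geq2$; the group $HC_0(A)=A/[A,A]$ is absorbed rather than produced as a homology group. So ``computes cyclic homology'' requires a precise interpretation (a degree shift, possibly a reduced variant), and your write-up should either adopt that interpretation explicitly or reconcile the low-degree discrepancy rather than simply citing the classical identification.
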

\noindent
We will neither use nor prove this result; cf.~\cite[Proposition
1.4]{CQ1} and \cite[\S 4.1 and Lemma ~4.2.1]{KS}.

\section{Applications to Hochschild and  cyclic homology}
\subsection{Hochschild homology}\label{hoch_sect}
Given an algebra $A$ and an $A$-bimodule $M$, we
let $H_k(A,M)$ denote the $k$-th  Hochschild
homology group of $A$ with coefficients in $M$.
Also, write $[A,M]\sset M$ for the
$\k$-linear span of the set $\{am-ma\mid a\in A,m\in M\}$.
Thus, 
$H_0(A,M)= M/[A,M]$.

We extend some ideas of Cuntz and Quillen  \cite{CQ2} to obtain our first important result.

\begin{thm}\label{hoch_thm} For any unital $\k$-algebra $A$, there
is a natural graded space isomorphism
$$H_\idot(A,A)\cong\Ker[\bi_\De: \DR^\hdot A\to \Om^{\hdot-1}A].$$
\end{thm}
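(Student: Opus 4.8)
The plan is to realize both sides of the claimed isomorphism as the homology of a single explicit complex, namely the extended de Rham complex with the contraction differential, by exploiting the structure developed in \S\ref{extkadrcplx}. The key point is that $\bi_\Delta$ satisfies $(\bi_\Delta)^2 = 0$ (Lemma~\ref{biDelta}(ii)), so $\bi_\Delta : \DR^\idot A \to \Om^{\idot-1} A$ together with the full tower of maps $(\Om^{\,}\!A)^{\o k}_\cyc$ from \eqref{chainDR} assembles into a bicomplex on $\DR_t A$. More precisely, on the extended complex $\DR_t A = \bigoplus_{p,q} \DR_t^{2p,q} A$ one has the de Rham differential $\d$ of bidegree $(0,1)$ and the contraction $i_\Delta$ of bidegree $(2,-1)$, which anticommute and both square to zero by the relations \eqref{relations} (applied with $\Th = \Phi = \Delta$) together with Lemma~\ref{biDelta}. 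Thus $(\DR_t A, \d, i_\Delta)$ is a mixed (bi)complex in exactly the Cuntz--Quillen style, and I want to compute one of its associated homologies two different ways.

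First I would compute the homology of $i_\Delta$ on $\DR_t A$ \emph{with $\d$ held fixed}, i.e.\ row by row. The $p=0$ row is $\DR^\idot A$ and the incoming differential lands it into the $p=1$ row $\Om^{\,}\!A$ via $\bi_\Delta$; continuing, the rows are $(\Om^{\,}\!A)^{\o k}_\cyc$ with the internal contraction maps of \eqref{chainDR}. The claim to establish is that this ``$i_\Delta$-column'' is a resolution whose homology is concentrated at $p=0$ and equals $\Ker[\bi_\Delta : \DR^\idot A \to \Om^{\idot-1} A]$ — which is almost tautological once one checks exactness of the tail $\DR A \to \Om^{\,}\!A \to (\Om^{\,}\!A)^{\o 2}_\cyc \to \cdots$ in positive $p$-degree. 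This is where the Karoubi operator and the Cuntz--Quillen harmonic decomposition of noncommutative forms (cited in the layout as the tool for \S\ref{pf_main}) enter: the exactness of that tail is precisely a statement about the cyclic structure on $(\Om^{\,}\!A)^{\o\idot}$ and should follow from the standard acyclicity of the Connes-type complex computing cyclic homology of the DG algebra $\Om^{\,}\!A$, after identifying $(\Om^{\,}\!A)^{\o k}_\cyc$ via \eqref{triv} with the appropriate cyclic tensor powers.

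Next, I would compute the \emph{same} total homology the other way — taking $\d$-homology first — and match it with $H_\idot(A,A)$. Here the identification $\DR_t A \cong ((\Om A)_t)_\cyc$ and the isomorphisms \eqref{alternate} are used: the pieces $(\Om^{\,}\!A)^{\o k}_\cyc$ with the de Rham differential compute (shifted copies of) the noncommutative de Rham cohomology of $A$ in each tensor slot, and after passing to cyclic quotients one obtains the negative cyclic / Hochschild package for $A$; the upshot is that the $\d$-first spectral sequence degenerates onto Hochschild homology $H_\idot(A,A)$, exactly as in \cite{CQ2}. Comparing the two computations of the total homology of the bicomplex $(\DR_t A, \d, i_\Delta)$ then yields $H_\idot(A,A) \cong \Ker[\bi_\Delta : \DR^\idot A \to \Om^{\idot-1} A]$.

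The main obstacle I anticipate is the exactness claim in the $i_\Delta$-direction: showing that the chain \eqref{chainDR} is acyclic in positive $p$-degree and that passage to commutator quotients does not destroy this. Controlling how $\bi_\Delta$ interacts with the cyclic symmetrization — equivalently, proving the relevant piece of the harmonic decomposition — is the technical heart, and is presumably where the bulk of \S\ref{pf_main} goes. Everything else (the relations making $(\DR_t A, \d, i_\Delta)$ a bicomplex, the degeneration of the $\d$-first filtration onto Hochschild homology) should be either quoted from Cuntz--Quillen or reduced, via ``an identity of graded derivations that holds on generators holds everywhere,'' to the degree-$0$ and degree-$1$ computations already indicated in the text.
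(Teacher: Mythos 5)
Your proposal is a genuinely different route from the paper's, and it has a concrete gap that kills the first of your two spectral-sequence computations.

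The exactness you need along the $i_\Delta$-direction fails already at the first nontrivial step. Take the chain starting from $\DR^1 A$, which by \eqref{alternate} and the bidegree count has the form
\begin{equation*}
\DR^1 A \,\xrightarrow{\ \bi_\Delta\ }\, \Om^0 A = A \,\longrightarrow\, 0,
\end{equation*}
since the bidegree $(4,-1)$ piece of $\DR_t A$ vanishes. The map $\bi_\Delta$ here is $u\,\d v \mapsto [u,v]$, with image $[A,A]$, so the homology at $p=1$ is $A/[A,A]=H_0(A,A)$, which is nonzero for any nontrivial $A$. Thus the tail $\DR A \to \Om A \to (\Om A)^{\o 2}_\cyc \to \cdots$ is \emph{not} a resolution, and the $i_\Delta$-first page of your spectral sequence is not concentrated at $p=0$. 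The second half of your plan is also suspect: the $\d$-homology of $\DR^\hdot A$ (and of the higher tensor pieces) does not recover $H_\idot(A,A)$ --- for instance $(\bom,\d)$ is acyclic (as used in \S\ref{pf2}) --- and in any case the total complex $(\DR_t A,\ \d+i_\Delta)$ is the noncommutative equivariant de Rham complex of \S\ref{repf}, not a model for Hochschild or periodic cyclic homology; the paper's cyclic-homology model in Theorem~\ref{main} uses the completed $\bomt$, not $\DR_t A$. So the two legs of the comparison do not compute what you want.

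The paper's actual argument is much more direct and does not invoke the full tower \eqref{chainDR} at all. The key is the identity of Proposition~\ref{bkappa}: on $\Om^n A$ one has $\bi_\Delta=(1+\ka+\ka^2+\cdots+\ka^{n-1})\circ\b$, where $\ka$ is the Karoubi operator and $\b$ the Hochschild boundary. Combining this with Lemma~\ref{ka_iso} (the Cuntz--Quillen identification $(\Om^n\br)^\ka\iso\DR^n A$, under which $\ka$ acts as the identity, so that $\bi_\Delta = n\cdot\b\br$ there) and the fact that $\Ker(\b\br)$ on $\ka$-invariants is $H^n(\Om A,\b)\cong H_n(A,A)$, one obtains the short exact sequence
\begin{equation*}
0\to H_n(A,A)\to \DR^n A\xrightarrow{\ \bi_\Delta\ }[A,\Om^{n-1}A]^\ka\to 0,
\end{equation*}
of which the theorem is the kernel statement. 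In short, the theorem is proved by expressing $\bi_\Delta$ explicitly in terms of the known differential $\b$ via the Karoubi operator, rather than by a bicomplex comparison; the harmonic decomposition and the Karoubi operator that you correctly anticipate as the technical ingredient are used here at the chain level, not to prove an acyclicity statement about the extended complex.
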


To put  Theorem \ref{hoch_thm} in context, recall that
Cuntz and Quillen used 
noncommutative differential forms 
to compute  Hochschild
homology. Specifically,  following
 \cite{CQ1} and \cite{CQ2}, consider a complex
$\ldots\stackrel{\b}\map\Om^{2}A
\stackrel{\b}\map\Om^{1}A\stackrel{\b}\map\Om^{0}A\map 0$.
Here, $\b$ is the {\em Hochschild differential}  given by the
formula
\begin{equation}\label{b} \b:\ \al\,\d a\lto(-1)^{n }\cd[\al, a],\qquad\forall
 a\in A/\k,\ \al\in\Om^{n }A,\,
n >0.
\end{equation}

It was shown in \cite{CQ2} that the complex $(\Om^\hdot A, \b)$
can be identified with the standard  Hochschild chain
complex. It follows that
$H_\idot(\Om^{\,}\a,\b)=H_\idot(A,A)$ 
are the  Hochschild
homology groups of $A$.

 Theorem \ref{hoch_thm} will follow directly from Proposition
\ref{bkappa} below (see the discussion after this proposition).
Proposition \ref{bkappa} itself will be proved in \S \ref{pf1}.

\begin{rem} A somewhat more geometric interpretation of Theorem
  \ref{hoch_thm}, from the point of view of representation functors,
  is provided by the map \eqref{first}: see Theorem \ref{rep_thm} of
  \S \ref{repf} below.
\end{rem}

\subsection{An application} The algebra  $A$ is said to be {\em
connected} if the following
sequence
is exact:
\begin{equation}\label{exact}
0\map \k\map \DR^0 A\stackrel{\d}\map\DR^1 A.
\end{equation}

\begin{prop}\label{ham_lemma}
Let  $A$ be a {\em connected}  algebra such that $H_2(A,A)=0$. 
Then,   \smallskip

\npb{$H_1(A,A)=(\DR^1 A)\closed$ and $(\DR^2 A)\exact=(\DR^2 A)\closed$.}

\npb{There is a natural  vector space
isomorphism $(\DR^2 A)\closed\iso[A,A]$.}
\end{prop}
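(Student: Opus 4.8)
\textbf{Proof plan for Proposition~\ref{ham_lemma}.}

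The plan is to combine Theorem~\ref{hoch_thm} with the two basic identities from Lemma~\ref{biDelta}(ii) (namely $\bi_\Delta\d + \d\bi_\Delta = 0$ and $(\bi_\Delta)^2 = 0$) and the connectedness hypothesis, exploiting the fact that $\bi_\Delta$ has image in $[A,\Omega A]$ (shown in the proof of Lemma~\ref{biDelta}), so it vanishes on $\Omega^1 A$ after passing to $\DR^1 A$; more precisely $\bi_\Delta : \DR^1 A \to \Omega^0 A = A$ lands in $[A,A]$ and, being induced by a map killing commutators, actually equals zero on $\DR^1 A$ only at the level we need to track carefully. Let me lay out the steps.

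First, for part~(i), I would unwind Theorem~\ref{hoch_thm} in low degrees. In degree $1$, $H_1(A,A) \cong \Ker[\bi_\Delta : \DR^1 A \to \Omega^0 A]$. By the formula in Lemma~\ref{biDelta}(i), $\bi_\Delta(a_0\,\d a_1) = -[a_1, a_0]$, so the kernel of $\bi_\Delta$ on $\DR^1 A$ consists precisely of those classes of one-forms whose "$\b$-image" vanishes; but $\b$ on $\Omega^1 A$ is $\al\,\d a \mapsto -[\al, a]$, and the relation $\bi_\Delta \d + \d \bi_\Delta = 0$ identifies, on the complex $(\Omega^\bullet A, \b)$, the de Rham-closed forms in $\DR^1 A$ with $\Ker\bi_\Delta$. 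So I would show $\Ker[\bi_\Delta : \DR^1 A \to A] = (\DR^1 A)\closed$ directly: the inclusion $(\DR^1 A)\closed \subseteq \Ker\bi_\Delta$ follows because $\d\om = 0$ combined with $\bi_\Delta\d\om + \d\bi_\Delta\om = 0$ gives $\d(\bi_\Delta\om) = 0$ in $\DR^0 A$, and connectedness forces $\bi_\Delta\om \in \k$, but $\bi_\Delta$ lands in $[A,A]$ which meets $\k$ trivially (here I need $A$ to be, say, such that $\k\cap[A,A]=0$ — this should follow from connectedness, since $[A,A] = \Ker(A \to \DR^0 A)$ and $\k \hookrightarrow \DR^0 A$ is injective by \eqref{exact}). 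For the reverse inclusion, $\bi_\Delta\om = 0$ means $\om$ is a cycle in the Hochschild complex $(\Omega^\bullet A, \b)$ already at the chain level after Theorem~\ref{hoch_thm}'s identification, and one reads off $\d\om$-closedness from the structure of the isomorphism. In degree $2$: $H_2(A,A) = 0$ says $\Ker[\bi_\Delta : \DR^2 A \to \Omega^1 A] = 0$, i.e. $\bi_\Delta$ is injective on $\DR^2 A$. Then $(\DR^2 A)\closed \subseteq (\DR^2 A)\exact$ is what needs proving (the reverse is automatic since $\d^2 = 0$): given $\d\eta = 0$ with $\eta \in \DR^2 A$, I want $\eta = \d\zeta$; apply $\bi_\Delta$: $\bi_\Delta\eta \in \Omega^1 A$ satisfies $\d\bi_\Delta\eta = -\bi_\Delta\d\eta = 0$ in $\DR^2 A$, wait — I need to be careful about where $\d\bi_\Delta\eta$ lives; rather, I would argue that $\bi_\Delta\eta$, viewed via $H_1(A,A) = (\DR^1 A)\closed$ — no. The cleaner route: by Theorem~\ref{hoch_thm} the map $\bi_\Delta$ fits into an exact sequence computing $H_\bullet(A,A)$, and $H_2 = 0$ together with the $\b$-complex description gives surjectivity of the relevant connecting map, which translates into every closed $2$-form being exact.

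Second, for part~(ii), I would construct the isomorphism $(\DR^2 A)\closed \xrightarrow{\sim} [A,A]$ as follows: from part~(i), $(\DR^2 A)\closed = (\DR^2 A)\exact = \d(\DR^1 A)$, so the de Rham differential gives a surjection $\DR^1 A \twoheadrightarrow (\DR^2 A)\closed$ with kernel $(\DR^1 A)\closed = H_1(A,A)$ (again by part~(i)), hence $(\DR^2 A)\closed \cong \DR^1 A / (\DR^1 A)\closed$. Now $\bi_\Delta : \DR^1 A \to A$ has kernel exactly $(\DR^1 A)\closed$ (by the degree-$1$ analysis in part~(i)) and image in $[A,A]$; I claim the image is all of $[A,A]$: indeed $\bi_\Delta(a\,\d b) = -[b,a]$ by Lemma~\ref{biDelta}(i), so every commutator is hit. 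Therefore $\bi_\Delta$ descends to an isomorphism $\DR^1 A / (\DR^1 A)\closed \xrightarrow{\sim} [A,A]$, and composing with $\d^{-1}$ from the previous sentence — or rather noting $(\DR^2 A)\closed \cong \DR^1 A/(\DR^1 A)\closed \cong [A,A]$ — yields the desired isomorphism. Concretely the map sends $\d\om \mapsto \bi_\Delta\om$, which is well-defined because $\d\om = \d\om'$ implies $\om - \om' \in (\DR^1 A)\closed = \Ker\bi_\Delta$.

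\textbf{Main obstacle.} The delicate point is matching the abstract identification in Theorem~\ref{hoch_thm} (which is a graded \emph{space} isomorphism $H_\bullet(A,A) \cong \Ker\bi_\Delta$) with the chain-level $\b$-complex description of Cuntz--Quillen so that the boundary maps, de Rham differential, and $\bi_\Delta$ all interact as claimed — in particular ensuring that "$\bi_\Delta\om = 0$ in $\DR^1 A$" is genuinely equivalent to "$\om$ is de Rham-closed" rather than merely one implication, and that $H_2(A,A) = 0$ really does yield exactness of closed $2$-forms and not just injectivity of $\bi_\Delta$ in degree $2$. This requires carefully invoking Proposition~\ref{bkappa} (referenced in the excerpt as the engine behind Theorem~\ref{hoch_thm}) and the Karoubi-operator/harmonic-decomposition machinery of Cuntz--Quillen to control the relation between $(\Omega^\bullet A, \b, \d, \bi_\Delta)$ as a mixed complex; I would expect the bulk of the work to be bookkeeping with that structure rather than any genuinely new idea, with connectedness entering precisely to kill the constants obstruction in degree $\le 1$.
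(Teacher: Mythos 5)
Your approach differs genuinely from the paper's: the paper proves the proposition by invoking the \emph{noncommutative moment map} lift $\widetilde{\munc}: (\DR^2 A)\closed \to [A,A]$ of \cite[Proposition 4.1.4]{CBEG}, whose existence (for a connected algebra) gives a small commutative diagram in which $\bi_\Delta$ appears as both $\widetilde{\munc}\circ\d$ and $\d\circ\widetilde{\munc}$; the three conclusions then fall out of a diagram chase using $H_2=0$ (injectivity of $\bi_\Delta$ on $\DR^2 A$, hence of $\widetilde{\munc}$) and Quillen's surjectivity $\bi_\Delta:\DR^1 A\onto[A,A]$. You try instead to work directly from the commutation relation $\bi_\Delta\d+\d\bi_\Delta=0$, connectedness, and Theorem~\ref{hoch_thm}, never introducing $\widetilde{\munc}$.

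Part of your route can be made to work: for the inclusion $(\DR^1 A)\closed\subseteq\Ker\bi_\Delta$ your argument is essentially right (closed $\om$ gives $\d(\bi_\Delta\om)=0$, so $\bi_\Delta\om\in\k\cap[A,A]=0$ using connectedness to kill $\k\cap[A,A]$). For the reverse inclusion you gesture at ``reading it off Theorem~\ref{hoch_thm}'', but the correct, short argument is simply: if $\bi_\Delta\om=0$, then $\bi_\Delta(\d\om)=-\d(\bi_\Delta\om)=0$, and injectivity of $\bi_\Delta$ on $\DR^2 A$ (from $H_2=0$) forces $\d\om=0$. You circle near this but never state it.

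The genuine gap is the claim $(\DR^2 A)\closed=(\DR^2 A)\exact$. Your write-up trails off here (``wait --- I need to be careful ... no. The cleaner route: ...'') without producing an argument, and the vague appeal to ``an exact sequence computing $H_\bullet$'' and to the Karoubi/harmonic machinery is a red herring: the paper's proof of this proposition does not touch the Karoubi operator, and that machinery does not by itself supply the missing surjectivity of $\d:\DR^1 A\to(\DR^2 A)\closed$. In the paper this surjectivity is precisely the payoff of having $\widetilde{\munc}$ defined on \emph{all} closed $2$-forms (not merely on $\d(\DR^1 A)$), which is nontrivial input from \cite{CBEG}. Without it, one would need a substitute --- e.g.\ acyclicity of $(\Omega^\bullet A,\d)$ to write $\bi_\Delta\eta=\d a$, connectedness to normalize $a\in[A,A]$, surjectivity $\bi_\Delta:\DR^1 A\onto[A,A]$ to pick $\om$ with $\bi_\Delta\om=-a$, and finally injectivity of $\bi_\Delta$ on $\DR^2 A$ to conclude $\d\om=\eta$ --- and none of this appears in your sketch.
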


\begin{proof} We will freely use the notation of 
\cite[\S4.1]{CBEG}.
According to \cite[Proposition 4.1.4]{CBEG},  for any connected algebra
$A$, there is a  map
  ${\widetilde{\munc}}$,  a lift of the
{\em noncommutative moment map}, that fits into
the following  commutative diagram:
\begin{equation}\label{sqdiag}
\xymatrix{
\DR^1 A\ar[rr]^<>(0.5){\d}\ar@{->>}[d]_<>(0.5){\bi_\Delta}&&
(\DR^2 A)\closed
\ar@{.>}[dll]_<>(0.5){\widetilde{\munc}}\ar[d]_<>(0.5){\bi_\Delta}\\
{[A,A]}^{^{}}\ar[rr]^<>(0.5){\d}&&[A,\Om^1 A].
}
\end{equation}

Since
$H_2(A,A)=0$, we deduce from
 the short exact sequence of
 Theorem \ref{hoch_thm} for $n=2$
 that the right vertical map
$\bi_\Delta$ in diagram \eqref{sqdiag} is injective.
It follows, by commutativity  of the lower right
triangle in  \eqref{sqdiag},
that the map  ${\widetilde{\munc}}$ must be injective.
Thus, from Theorem \ref{hoch_thm} for $n=1$, we deduce
$H_1(A,A)=\ker(\bi_\Delta: \DR^1 A \to [A,A])=(\DR^1 A)\closed$.

Next,  the left vertical map $\bi_\Delta$ in the diagram
is given by the formula $a\,\d b\mto [a,b]$. Hence, this map
is surjective. Since the map   ${\widetilde{\munc}}$ 
is injective, it follows
from the commutativity  of the upper left
triangle in  \eqref{sqdiag} that 
 ${\widetilde{\munc}}$, as well as the map $\d$ in the
top row of diagram  \eqref{sqdiag}, must  be  surjective.
We conclude that  
the map  ${\widetilde{\munc}}$
 yields an
isomorphism $(\DR^2 A)\closed\iso [A,A]$
and also that $(\DR^2 A)\exact=(\DR^2 A)\closed$.
\end{proof}

Proposition \ref{ham_lemma} can be easily extended to  a relative
setting 
where 
the algebra $A$ contains
a  subalgebra of the form $R =\k^r$, for some $r \geq 1$.
Then, the correct  relative counterpart of the commutator space $[A,A]$
turns out to be the  subspace $[A,A]^R\sset [A,A]$, 
 formed
by the elements  which commute with $R$.
The corresponding formalism has been worked out in \cite{CBEG}.
The relative version of Proposition \ref{ham_lemma} reads as follows.

\begin{cor}\label{mucor} Let  $R=\k^r$.
Let $A$ be an algebra containing $R$ and such that
the sequence $0\to R \to A\to \DR_R^1A$ is exact and
$H_2(A,A)=0$.
Then,
there is a natural  vector space
isomorphism $(\DR^2_RA)\closed\iso[A,A]^R$.\qed
\end{cor}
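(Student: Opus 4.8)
The plan is to replay the proof of Proposition~\ref{ham_lemma} with every object and map replaced by its $R$-relative counterpart, the relative formalism being the one developed in \cite{CBEG}. Two ingredients are needed. The first is the evident relative analogue of Theorem~\ref{hoch_thm}: since $R=\k^r$ is separable over $\k$, the relative Cuntz--Quillen complex $(\Om^\hdot_RA,\b)$ still computes the (absolute) Hochschild homology $H_\idot(A,A)$, and the same argument that proves Theorem~\ref{hoch_thm} then gives a natural identification $H_\idot(A,A)\cong\Ker[\bi_\De\colon\DR^\hdot_RA\to\Om^{\hdot-1}_RA]$; in particular, for $\hdot=2$ one obtains the exact sequence $0\to H_2(A,A)\to\DR^2_RA\xrightarrow{\bi_\De}\Om^1_RA$. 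The second is the relative lift of the noncommutative moment map from \cite[Proposition~4.1.4]{CBEG}: under the hypothesis that $0\to R\to A\to\DR^1_RA$ is exact, there is a map $\widetilde{\munc}\colon(\DR^2_RA)\closed\to[A,A]^R$ fitting into the relative version of the commutative diagram~\eqref{sqdiag}, with top row the differential $\d\colon\DR^1_RA\to(\DR^2_RA)\closed$, right-hand vertical map (the relevant restriction of) $\bi_\De$ on $\DR^2_RA$, and left-hand vertical map a surjection onto $[A,A]^R$ induced by $a\,\d b\mapsto[a,b]$.

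Granting these, the argument is a formal diagram chase identical to the one in Proposition~\ref{ham_lemma}. Because $H_2(A,A)=0$, the relative Theorem~\ref{hoch_thm} forces $\bi_\De$ to be injective on $\DR^2_RA$, hence on $(\DR^2_RA)\closed$; commutativity of the lower-right triangle of the relative \eqref{sqdiag} then yields that $\widetilde{\munc}$ is injective. On the other side, the left-hand vertical map is surjective onto $[A,A]^R$, so commutativity of the upper-left triangle, together with the injectivity of $\widetilde{\munc}$, forces both $\widetilde{\munc}$ and the top-row differential to be surjective. Hence $\widetilde{\munc}$ is the desired isomorphism $(\DR^2_RA)\closed\iso[A,A]^R$. (One also recovers $(\DR^2_RA)\exact=(\DR^2_RA)\closed$ and the relative form of the first statement of Proposition~\ref{ham_lemma} for free, though these are not part of the corollary.)

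The step that requires actual care is not the chase but the construction of the relative diagram, i.e. checking that the machinery of \cite[\S4.1]{CBEG} really does go through when $R=\k^r$: one must verify that each map involved respects the relevant $R$-centralizer / ``diagonal'' conditions so that the corners $[A,A]^R$ and $[A,\Om^1_RA]$ (or its appropriate relative variant) are legitimate targets, and that the hypothesis $0\to R\to A\to\DR^1_RA$ is exactly what the lift $\widetilde{\munc}$ of \cite[Proposition~4.1.4]{CBEG} requires --- it plays the role that connectedness plays in the absolute case. Here separability of $R$ over $\k$, which provides an $R$-bimodule splitting of $A\to A/R$, is the fact that makes both the relative Cuntz--Quillen complex and the relative moment-map lift available. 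I expect no conceptual difficulty, only this bookkeeping.
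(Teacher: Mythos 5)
Your proposal is correct and follows essentially the same route the paper intends: the paper states the corollary without proof, explicitly remarking that it is the relative extension of Proposition~\ref{ham_lemma} using the $R$-relative formalism from \cite{CBEG}, which is precisely the replay you carry out. The only place you add substance beyond what the paper invokes is the observation that separability of $R=\k^r$ is what makes the relative Cuntz--Quillen complex compute the same Hochschild homology and makes the relative moment-map lift available; this is a sensible and correct remark, and consistent with the paper's reliance on \cite{CBEG}.
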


An important example where the above corollary applies is the case
where $A$ is the path algebra of a quiver with $r$ vertices.

\begin{rem} The isomorphism of
Proposition  \ref{ham_lemma} and Corollary \ref{mucor} plays a role
in the theory of Calabi-Yau algebras; see \cite[Claim 3.9.11]{Gi2}.
\end{rem}

\subsection{Cyclic homology} We recall some standard definitions, following
\cite[Chapter 2 and p.~162]{L}. For any graded vector space
$M=\oplus_{i\geq 0}\,M^i$, we introduce a $\Z$-graded
$\k[t,t^{-1}]$-module
\begin{equation} \label{bomteq}
M \hat\o\k[t,t\inv]\ :=\ \underset{n \in \Z}\bplus\
\left(\prod\nolimits_{i \in \Z}\
 t^{i}\, M^{n-2i}\right),
\end{equation}
where the grading is such that the space
$M\sset M \hat\o\k[t,t\inv]$  has the natural grading,
and $|t|:=2$.

Below, we will use a complex of {\em reduced} differential forms,
defined by setting 
$\bom^{_0}:=\Om^0A/\k=A/\k$ and
$\bom^{_k}:=\Om^kA$ for all $k>0$.
Let $\bom^{_\bullet} :=\bigoplus_{k\geq 0}\,\bom^{_k}$.
The Hochschild differential induces a $\k[t,t^{-1}]$-linear
differential $\b: \bomt \to \bomt$ of degree $-1$.

We also have the Connes differential
$\B: \bom^{_\bullet} \to\bom^{_{\hdot+1}}$ \cite{Co}. Following
Loday and Quillen \cite{LQ},
we  extend it to a $\k[t,t^{-1}]$-linear 
differential on $\bomt$ of degree $+1$.
It is known that $\B^2=\b^2=0$ and 
$\B\ccirc \b+\b\ccirc \B=0$. Thus, the map
$\B+t\cdot\b: \bomt\to\bomt$
gives a degree $+1$ differential on $\bomt$.

Write  $HP_{-\idot}(A)$, where, `$-\idot$' denotes
\textit{inverting} the degrees, for the  {\em reduced periodic cyclic
homology} of $A$
as defined in \cite{LQ} or  \cite[\S 5.1]{L},  using  a complex with
differential of degree $-1$.
According to \cite{CQ2}, the groups $HP_{-\idot}(A)$
turn out to be isomorphic to homology groups of the complex
$(\bomt, \B+t\cdot\b)$, with  differential of degree 
$+1$ (which is why we must invert the degrees).
It is known that  the action of multiplication by $t$
yields periodicity isomorphisms $HP_\idot(A)\cong HP_{\idot+2}(A).$
 Thus, up to isomorphism, there are only two groups,
$HP_{\text{even}}(A) := HP_0(A)$, and $HP_{\text{odd}}(A) := HP_1(A)$.

Next, we compose  the map 
 $\bi_\Delta: \DR^\hdot A\to\Om^{\hdot-1}A$
with the natural projection $\omb\onto \DR^\hdot A$
to obtain a map  $\omb\to\Om^{\hdot-1}A$.
The latter map descends to a map
 $\bom^{_\bullet} \to\bom^{_{\bullet-1}}$.
Furthermore, we may extend this last map, as well as the de
Rham differential
$\d: \bom^{_\bullet} \to\bom^{_{\hdot+1}}$,
to  $\k[t,t^{-1}]$-linear maps
 $\bomt\to\bomt$, of degrees $-1$
and $+1$, respectively.

The resulting maps $\d$ and $\bi_\Delta$ satisfy
$\d^2=(\bi_\Delta)^2=0$ and
$\d\ccirc\bi_\Delta+\bi_\Delta\ccirc d=0$, by Lemma \ref{biDelta}(ii).
Thus, the map $\d+t\cdot\bi_\Delta$
gives a degree $+1$ differential on $\bomt$.
This differential may be thought of as
some sort of equivariant differential for the `vector field'
$\Delta$.

The following theorem, to be proved
 in \S\ref{pf2} below, is one of the main results of the paper. It
shows the importance of the reduced contraction
map $\bi_\Delta$ for cyclic homology.

\begin{thm}\label{main} 
The homology of the  complex
$(\bomt, \d+t\cdot\bi_\Delta)$ 
is isomorphic to
$HP_{-\idot}(A)$, the reduced periodic cyclic homology of $A$ (with inverted degrees).
\end{thm}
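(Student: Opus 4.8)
The strategy is to compare the complex $(\bomt,\ \d + t\cdot\bi_\De)$ with the Cuntz--Quillen complex $(\bomt,\ \B + t\cdot \b)$, which is known to compute $HP_{-\idot}(A)$. The two complexes live on the same underlying $\k[t,t^{-1}]$-module, so it suffices to produce a $\k[t,t^{-1}]$-linear automorphism (or, more generally, a chain homotopy equivalence) of $\bomt$ carrying one differential to the other. The natural candidate is the operator built from the \emph{Karoubi operator} $\kappa$ and the harmonic decomposition of $\Om^\hdot A$ introduced by Cuntz and Quillen; this is exactly what the paper signals by referring to Proposition~\ref{bkappa} and \S\ref{pf2}. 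I would first recall that on $\Om^n A$ one has the operator identities relating $\b$, $\B$, $\d$, $\bi_\De$ and $\kappa$ — in particular that $\bi_\De$ on $\DR^\hdot A$ is essentially $\b$ twisted by the projection $\omb \onto \DR^\hdot A$ and by powers of $\kappa$, and that on the harmonic (i.e. $\kappa$-semisimple with eigenvalue $\neq 1$) part the relevant operators are invertible. The key algebraic input is that $\d$ and $\B$ differ only by an operator that is invertible on the non-degenerate part of each $\Om^n A$, with a precise formula in terms of $\kappa$.

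\medskip

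\textbf{Key steps, in order.} (1) Recall the Cuntz--Quillen setup: the Hodge/harmonic decomposition $\Om^n A = P^n A \oplus \bar P^n A$ where $P$ is the image of the projector built from $\kappa$, and the basic commutation relations among $\b, \B, \kappa$ on $\Om^\hdot A$ (e.g. $\kappa = 1 - (\b\B + \B\b)$ up to normalization, $B\kappa = B$, $\kappa^{n+1} = \kappa^n$ on $\Om^n$ modulo the relevant torsion, etc.). (2) State and use Proposition~\ref{bkappa}: identify $\bi_\De$ on $\DR^\hdot A$ explicitly in terms of $\b$ and $\kappa$ — this is the bridge between the ``de Rham'' picture of this paper and the ``Hochschild'' picture of Cuntz--Quillen, and it is precisely where Lemma~\ref{biDelta}(i) and formula~\eqref{bi_formula} get used. (3) Similarly express the de Rham differential $\d$ of $\DR^\hdot A$, transported to $\bomt$, in terms of $\B$ and $\kappa$. (4) Construct the $\k[t,t^{-1}]$-linear operator $\Phi$ on $\bomt$ (a power series in $t^{-1}$ times operators built from $\kappa$, convergent because it is ``locally finite'' — on each $\Om^n A$ only finitely many terms act nontrivially, by the degree count in~\eqref{bomteq}) and check $\Phi\circ(\B + t\b) = (\d + t\bi_\De)\circ\Phi$. (5) Verify $\Phi$ is an isomorphism, using that its ``leading term'' is invertible on the harmonic part and the identity on the degenerate part, so $\Phi$ is upper-triangular with invertible diagonal with respect to the Hodge filtration. (6) Conclude $H_\hdot(\bomt,\ \d + t\bi_\De) \cong H_\hdot(\bomt,\ \B + t\b) \cong HP_{-\idot}(A)$, the last isomorphism being the Cuntz--Quillen theorem quoted in the text.

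\medskip

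\textbf{Main obstacle.} The delicate point is Step (4)--(5): writing down the intertwiner $\Phi$ so that it is simultaneously (a) $\k[t,t^{-1}]$-linear, (b) well-defined on the completed module $\bomt$ (the product over $i$ in~\eqref{bomteq} means one must be careful that infinite sums of $\kappa$-operators make sense degree by degree), and (c) a genuine isomorphism rather than merely a quasi-isomorphism. The subtlety is that $\kappa$ is \emph{not} semisimple — it has a nontrivial ``resonant'' block where its only eigenvalue is $1$, coming from $\B$-exact forms — and on that block the naive formula for $\Phi$ may have a pole in $t^{-1}$ or fail to be invertible; handling this requires separating out the small $\kappa$-primary piece and treating it by hand (this is the part of the Cuntz--Quillen machinery that is genuinely technical). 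Once the operator is correctly assembled and its triangularity with respect to the Hodge filtration is established, the rest is bookkeeping: the intertwining identity reduces to the operator identities of Step (1)--(3), which can be checked on generators (degree $0$ and $1$ forms) exactly as was done for~\eqref{relations}, and then extended by the derivation property.
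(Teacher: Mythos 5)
Your high-level strategy — reduce to the Cuntz--Quillen complex $(\bomt,\ \B + t\b)$ via the harmonic decomposition and an explicit intertwiner built from the Karoubi operator $\ka$ — is exactly the paper's approach. But your ``main obstacle'' paragraph misidentifies the difficulty, and as a result you have not found the actual mechanism of the proof, which is considerably simpler than what you anticipate.

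The key facts you are missing are the two identities proved in \S\ref{harms}:
\[
\B = {\mathsf N}\,\d\, P \qquad\text{and}\qquad \bi_\Delta = \b\,{\mathsf N}\,P,
\]
where ${\mathsf N}$ is multiplication by $n$ on $\bom^{_n}$ and $P$ is the projector onto the harmonic part $P\bom = \Ker(\Id-\ka)^2$. These say two things at once. First, on $P^\perp\bom$ both $\B$ and $\bi_\Delta$ vanish identically; so there one compares $(P^\perp\bomt,\d)$ with $(P^\perp\bomt,t\b)$, and \emph{both} are acyclic outright — the former because $(\bom,\d)$ is acyclic (Poincar\'e lemma) and the harmonic decomposition is $\d$-stable, the latter by Cuntz--Quillen's result that $(P^\perp\Om,\b)$ is acyclic. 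No intertwiner is needed on $P^\perp$. Second, on $P\bom$, where $P$ acts as the identity, $\B$ and $\d$ differ only by the scalar ${\mathsf N}$, and $\bi_\Delta$ and $\b$ differ only by ${\mathsf N}$ (shifted by one); so the two differentials $\d+t\bi_\Delta$ and $\B+t\b$ are conjugate by the plain diagonal operator ${\mathsf N}!$ (multiplication by $n!$ in degree $n$). This $\Phi={\mathsf N}!$ is \emph{not} a power series in $t^{-1}$, has no poles, and is manifestly invertible and $\k[t,t^{-1}]$-linear; there is no ``resonance'' issue on the $\ka$-unipotent block because that block \emph{is} $P\bom$, and the conjugating operator there is just degree-scaling.

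So the genuine gap is: without the identities $\B={\mathsf N}\d P$ and $\bi_\Delta=\b{\mathsf N}P$, your step (4) of guessing $\Phi$ and verifying the intertwining relation is open-ended, and the technical worries you flag (convergence of a $\ka$-power-series in $t^{-1}$, non-semisimplicity of $\ka$) are artefacts of not having these formulas — they disappear entirely once you have them. You would need to prove $\bi_\Delta=\b{\mathsf N}P$ (the companion to Cuntz--Quillen's $\B={\mathsf N}\d P$); that is where Proposition~\ref{bkappa} and the identities $\ka^n-\Id=\b\ka^n\d$ from \eqref{ident} actually get used, and it is the content of \eqref{agree}(ii).
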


\begin{rem}[Hodge filtration]
In \cite[\S 1.17]{K08}, Kontsevich considers
a `Hodge filtration' on periodic cyclic homology. In terms of Theorem \ref{main},
the Hodge filtration ${\mathsf F}^\hdot_{_{\text{Hodge}}}$ may be defined as follows:
\vskip 2pt

\npb{${\mathsf F}^n_{_{\text{Hodge}}} HP_{\text{even}}\ $ consists of those
classes representable by sums
$\
\sum_{i \geq n}\ t^{-i} \gamma_{2i}, \quad \gamma_{2i} \in \bom^{2i}
$;}
\vskip 1pt

\npb{${\mathsf F}^{n+\frac{1}{2}}_{_{\text{Hodge}}} HP_{\text{odd}}\ $ consists of those classes
representable by sums
$\
\sum_{i \geq n} t^{-i} \gamma_{2i+1}, \quad \gamma_{2i+1} \in \bom^{2i+1}.
$}
\end{rem}


\subsection{Gauss-Manin connection}\label{gmsec} It is well-known that,
given a smooth family $p: \scr X \to S$ of complex schemes over a
smooth base $S$, there is a canonical flat connection on the relative
algebraic de Rham cohomology groups $H_{DR}^\hdot({\scr X}/S)$, called
the \textit{Gauss-Manin connection}.  More algebraically, let $A$ be a
commutative $\k$-algebra which is smooth over a regular subalgebra
$B\sset A$.  In such a case, the relative algebraic de Rham cohomology
may be identified with $HP^B_\idot(A)$, the relative periodic cyclic
homology; see, e.g., \cite{FT}.  The Gauss-Manin connection therefore
provides a flat connection on the relative periodic cyclic homology.

In \cite{Getz}, Getzler extended the definition of
 the Gauss-Manin connection to  a noncommutative setting.
Specifically,   let $A$ be a (not necessarily commutative) associative
algebra equipped with a {\em central} algebra embedding
$B=\k[x_1,\ldots,x_n]\into A$.
Assuming that $A$ is free as a $B$-module,
Getzler has defined a flat connection on $HP_\idot^B(A)$.
Unfortunately, Getzler's definition of the connection involves
quite complicated calculations in the Hochschild complex that
make it difficult to relate  his definition with
the classical geometric construction of  the Gauss-Manin connection
on de Rham cohomology.
Alternative approaches to the definition of Getzler's connection,
also based on homological algebra, were suggested more
recently by Kaledin \cite{K} and by
Tsygan \cite{T2}.

Below, we propose  a new, geometrically more transparent (we believe)
 approach
for the Gauss-Manin connection using the construction of
cyclic homology from the previous subsection.  Unlike earlier
constructions, our formula for the connection on  periodic cyclic homology
is identical, essentially,
 to the classic formula for the  Gauss-Manin connection
in  de Rham cohomology, though the objects involved
 have  different meanings. 

Our version of Getzler's
result reads as follows.

\begin{thm} \label{gmthm} Let $B$ be a commutative algebra.
Let $A$ be an associative algebra equipped
with a central algebra embedding $B\into A$ such that the quotient 
 $A/B$ is  a free $B$-module.
  
Then, there is a canonical
flat connection $\nabla_{GM}$ on $HP_{\idot}^B(A)$.
\end{thm}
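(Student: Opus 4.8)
The plan is to produce $\nabla_{GM}$ from the complex of Theorem~\ref{main} by a formula modelled on the classical construction of the Gauss--Manin connection on relative de Rham cohomology. As a preliminary step I would record the \emph{relative} analogue of Theorems~\ref{hoch_thm} and~\ref{main}: since $B$ is central in $A$ and $A/B$ is free, hence projective, over $B$, the embedding $B\into A$ is split over $B$, so the Karoubi operator and the Cuntz--Quillen harmonic decomposition used in \S\ref{pf_main} all have $B$-linear counterparts for $\Om^\hdot_B A$, and the same arguments give a natural isomorphism
\[
HP^B_{-\hdot}(A)\ \cong\ H_\hdot\big(\boba\,\hat\otimes_B\,B[t,t\inv],\ \ \d+t\cdot\bi_\Delta\big),
\]
with $\boba$ the reduced relative noncommutative forms, $\d$ the relative Karoubi--de Rham differential, and $\bi_\Delta$ the reduced contraction with the relative double derivation $\Delta\colon a\mto 1\o a-a\o1\in A\o_B A$ (Lemma~\ref{biDelta} remains valid relatively). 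In particular $HP^B_\hdot(A)$ is a $B$-module, and the goal is a flat connection $\nabla_{GM}\colon HP^B_\hdot(A)\to\Om^1_{B/\k}\o_B HP^B_\hdot(A)$, $\Om^1_{B/\k}$ being the module of differentials of the commutative algebra $B$.

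Next I would filter the \emph{absolute} extended complex $\bom^\hdot A\ttc$, which computes $HP_{-\hdot}(A)$, by the decreasing filtration $F^\hdot$ whose $p$-th step consists of forms involving at least $p$ tensor factors from the sub-bimodule $K:=\ker\!\big(\Om^1_\k A\onto\Om^1_B A\big)$. Using that $A$ is free, hence flat, over $B$ one has $K\cong A\o_B\Om^1_{B/\k}\o_B A$, and hence a natural identification $\gr^p_F\big(\bom^\hdot A\ttc\big)\cong\Om^p_{B/\k}\o_B\big(\boba\,\hat\otimes_B\,B[t,t\inv]\big)$, carrying the induced differential to the relative differential $\d+t\cdot\bi_\Delta$ of the first step. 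Granting that $D:=\d+t\cdot\bi_\Delta$ preserves $F^\hdot$, the connection is read off exactly as classically: represent a class of $HP^B_{-\hdot}(A)$ by a relative cocycle $\om$, lift it through $F^0/F^1\cong\boba\,\hat\otimes_B\,B[t,t\inv]$ to $\tilde\om\in\bom^\hdot A\ttc$ (a $B$-module splitting $A=B\oplus(A/B)$ provides a canonical such lift), apply $D$ --- the result lies in $F^1$ since $\om$ is relatively closed --- and let $\nabla_{GM}(\om)\in F^1/F^2\cong\Om^1_{B/\k}\o_B\big(\boba\,\hat\otimes_B\,B[t,t\inv]\big)$ be its class. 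Since $D(D\tilde\om)=0$, this is a relative cocycle; a routine diagram chase shows $\nabla_{GM}$ is well defined on $HP^B_{-\hdot}(A)$ and satisfies $\nabla_{GM}(b\cdot\om)=\d b\o[\om]+b\cdot\nabla_{GM}(\om)$. Flatness is then formal: $D\tilde\om$ is an $F^1$-lift of the cocycle representing $\nabla_{GM}(\om)$ and $D(D\tilde\om)=0$, so $\nabla_{GM}^2\colon HP^B_{-\hdot}(A)\to\Om^2_{B/\k}\o_B HP^B_{-\hdot}(A)$ vanishes; equivalently $\nabla_{GM}$ is the differential $d_1$ on $E_1=\Om^\hdot_{B/\k}\o_B HP^B_{-\hdot}(A)$ of the spectral sequence of $(\bom^\hdot A\ttc,F^\hdot)$, so $d_1\ccirc d_1=0$.

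I expect the main obstacle to be the one unproved assertion above: that $D=\d+t\cdot\bi_\Delta$ preserves the filtration $F^\hdot$. For the Karoubi--de Rham part this is easy --- $\d$ sends $K$ into $K\cdot\Om^1+\Om^1\cdot K$. The contraction $\bi_\Delta$ is the difficulty: by Lemma~\ref{biDelta}(i) it is assembled from iterated commutators, and contracting on a base one-form $\d b$ yields terms of the shape $[\,a_0,[\,b,\d b'\,]\,]$ which, counted naively, leave $F^p$ --- these are precisely the terms that vanish in the commutative case. Showing that such contributions are nonetheless controlled is exactly where centrality of $B$ enters essentially (through $[b,a]=0$ and $[b,\d b']=[b',\d b]$ for $a\in A$, $b,b'\in B$); if a strictly filtered model cannot be obtained on the nose, one passes instead through the Cuntz--Quillen harmonic decomposition of \S\ref{pf_main}, which replaces $D$ by a quasi-isomorphic, manifestly filtered differential, and freeness of $A/B$ over $B$ identifies the graded pieces as above. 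Once this structural point is settled, the construction of $\nabla_{GM}$, the Leibniz rule, and its flatness all follow for free --- which is what makes the approach more transparent than Getzler's Hochschild-complex formulas.
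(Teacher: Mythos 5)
You have the right skeleton — filter a complex computing absolute $HP$, identify the associated graded as $\Omega^\hdot\com B \o_B (\text{relative complex})$, read off the connection from the $E_1$-page, get flatness from $D^2=0$ — but you correctly flag the key obstruction and then do not resolve it, and your two proposed fixes (centrality pushed through the absolute complex, or the harmonic decomposition) are not what closes the gap.

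The difficulty you name is real: on the absolute complex $\bom^\hdot A\,\hat\otimes\,\k[t,t^{-1}]$, the filtration by forms involving $\geq p$ tensor factors from $K=\ker(\Om^1A\onto\Om^1_BA)$ is \emph{not} preserved by $\bi_\Delta$. By Lemma~\ref{biDelta}(i), $\bi_\Delta(a_0\,\d a_1\ldots\d a_n)$ is a sum of cyclically permuted commutators $[a_k,\ \d a_{k+1}\ldots\d a_{k-1}]$, and when $a_k\in B$ the factor $\d a_k$ is consumed while producing a commutator with a zero-form in $B$; naive $K$-counting drops. Centrality of $B$ in $A$ alone does not kill these terms (they involve commutators with one-forms, not with zero-forms), and the harmonic projector commutes with $\bi_\Delta$ but does not make the filtration stable.

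What the paper does instead — and what your sketch is missing — is to quotient \emph{before} filtering. One sets $\oba := \Omega^\hdot A / \ldp[\Omega^\hdot A,\Omega^\hdot B]\rdp$, i.e., one kills the two-sided ideal generated by graded commutators of arbitrary forms on $A$ with arbitrary forms on $B$, and then filters $\oba$ by powers of the ideal $\ldp\d B\rdp$. After this quotient, the commutator terms in $\bi_\Delta$ that would ruin the filtration vanish identically, because they all lie in $\ldp[\Omega^\hdot A,\Omega^\hdot B]\rdp$; the filtration is then stable under $\d$ and $t\cdot\bi_\Delta$ by inspection. The degree-zero piece of the filtration is $\Om(A;B):=\oba/\ldp\d B\rdp$, which replaces your $\Om^\hdot_B A$; Theorem~\ref{main} (applied over the commutative base $B$) then gives $H^\hdot\big(\bom(A;B)\,\hat\otimes\,\k[t,t^{-1}],\ \d+t\bi_\Delta\big)\cong HP^B_\idot(A)$. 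The identification of the associated graded, namely that the natural surjection $\Om^\hdot(A;B)\o_B\Omega^i\com B\onto\gr^i_F\oba$ is an isomorphism, is established not by a splitting trick but by producing an explicit $\Omega^\hdot(B)$-module basis of $\oba$ (Lemma~\ref{frlem}); this is the other technical ingredient your sketch glosses over. Once both of these are in place, your spectral-sequence reasoning for the Leibniz rule and flatness goes through as you describe. So the structural idea is sound, but the actual proof hinges on replacing the absolute complex by $\oba$ — without that step, the complex you propose is not filtered and the argument does not start.
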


\begin{notation} \vi Given an algebra $R$ and a subset $J\sset R$, let $\ldp J\rdp$
denote the two-sided ideal in $R$ generated by the set $J$.  

\vii For a
commutative algebra $B$,
we set
$\Om^\hdot\com B:=\La^\hdot_B(\Om\com^1B)$,  the  super-commutative DG algebra
of  differential forms, generated by the $B$-module
$\Om\com^1B$ of K\"ahler differentials.
\end{notation}

\noindent
\textit{Construction of the Gauss-Manin connection.} Given a
{\em central} algebra embedding $B\into A$, we
  may realize the relative periodic cyclic homology of $A$ over $B$ as
follows.  First, we define the following quotient DG algebras of
$(\Omega^\hdot A, \d)$:
\begin{gather*}
\oba  := \Omega^\hdot A / \ldp [\Omega^\hdot A,
\Omega^\hdot B] \rdp, 
\qquad 
\Om(A;B) := \oba  / \ldp \d B \rdp.
\end{gather*}

Thus, we have a super-central
DG algebra embedding $\Om^\hdot\com B\into\oba$
induced by 
the natural embedding $\Om^\hdot B\into \Om^\hdot A$.
We introduce the descending filtration
$F^\hdot(\oba )$ by powers of the ideal
$\ldp  \d B \rdp$. For the corresponding associated graded algebra,
there is a natural surjection
\begin{equation}\label{grass}
\Om^\hdot(A;B) \o_B \Omega^i\com  B \onto \gr^i_F \oba, \quad
\alpha \o
\beta \mapsto \alpha \beta,\quad \forall \alpha \in \oba ,\
 \beta \in \Omega^i\com  B.
\end{equation}

Below, we will also make use of the objects
 $\boba$ and  $\bom(A;B)$,  obtained  by killing
$\k\sset A = \Omega^0 A$.
Thus, 
$\bom(A;B)\ttc$ and $\boba 
\ttc$
are modules  over $\k[t,t^{-1}]$.
There is a natural descending filtration $F^\hdot$ on $\boba \ttc$
induced by
$F^\hdot(\oba )$ and such that $\k[ t, t^{-1} ]$
is placed in filtration degree zero. This filtration  is  obviously
stable under the differential
 $\d$. It is also stable under  the differential $t\cdot\bi_\Delta$
since the commutators that appear in $t\cdot\bi_\Delta(\omega)$ 
(see Lemma \ref{biDelta}(i)) vanish, by
definition of $\oba $. Therefore, the map \eqref{grass}
induces  a morphism of double complexes,
equipped with the differentials $d \o_B \Id$
and $t\cdot\bi_\Delta \o_B \Id$,
\begin{equation}\label{gras}
\bom^{_\hdot}(A;B)\ttc\
 \o_B\
 \Omega^i\com  B\ 
 \onto\
\gr^i_F \boba \ttc.
\end{equation}

We will show in \S\ref{pfgm} below that the assumptions of
Theorem \ref{gmthm} ensure that
the map \eqref{grass} is an isomorphism. 

Assume this for the moment and consider the standard spectral sequence 
associated with the filtration $F^\hdot(\boba \ttc)$.
The first page of this sequence consists of terms
$\gr_F(\boba \ttc)$. 
 Under the above
assumption, the LHS of \eqref{gras}, summed over all $i$, composes
the first page of the spectral sequence of
$\Big(F^\hdot(\boba \ttc),\ \d + t\cdot\bi_\Delta\Big)$.
  Then, for the second page of the
spectral sequence we get
$$E^2=H^\hdot\big(\bom(A;B) \ttc,\
\d + t\cdot\bi_\Delta\big)\
\o_B\ \Om^\hdot\com B.$$

We now describe the differential $\nabla$  on the second page.
 Let 
\begin{equation}\label{nab}\nabla_{GM}:\  H^\hdot(\bom(A;B) \ttc) \
\too\ H^\hdot\big(\bom(A;B) \ttc\big)\ \o_B\
 \Omega^1\com  B
\end{equation}
be the restriction of  $\nabla$ to degree zero.  Then we immediately
see that 
$$
\nabla(\alpha \o \beta) = \nabla_{GM}(\alpha) \wedge \beta + (-1)^{|\alpha|} \alpha \o
 (\ddr   \beta),
\qquad
\nabla_{GM}(b \alpha) = b\nabla_{GM}(\alpha) + (-1)^{|\alpha|} \alpha \o (\ddr b),\qquad \forall b
 \in B,
$$
where now $ \ddr  $ is the usual de Rham differential. 
From these equations, we deduce that
the map $\nabla_{GM}$, from \eqref{nab},
gives a flat connection on $H^i\big(\bom(A;B)\ttc\big) $ for all
 $i$.  

 Explicitly, we may describe the connection $\nabla_{GM}$ as follows.
 Suppose that $\bar\alpha \in \bom(A; B)$ has the property that
 $(\d + t\cdot\bi_\Delta)(\bar\alpha) = 0$.  Let $\alpha \in \bom^B A$ be any
 lift, and consider $(\d + t\cdot\bi_\Delta)(\alpha)$. This must lie in
 $\ldp \d B \rdp$, and its image in $\Om(A;B)\o_B \Omega^1\com B$ is
 the desired element.\hfill$\lozenge$

\begin{rem}\label{getzrem}
 In  \cite{Getz}, Getzler  takes $B = \k\lll x_1, \ldots, x_n \rrr$, and takes
 $A$ to be a {\em formal} deformation over $B$ of an associative algebra $A_0$.
Although such a setting is not formally covered by
Theorem \ref{gmthm}, our construction of the Gauss-Manin connection
still applies. 

To explain this, 
 write $\fr\sset B= \k\lll x_1, \ldots, x_n \rrr$  for the
augmentation ideal of the formal power series without constant term.
Let  $A_0$ be a $\k$-vector space with a fixed nonzero element $1_A$,
and let $A=A_0\lll x_1, \ldots, x_n \rrr$
 be the
$B$-module of formal power series with coefficients in $A_0$.
We equip $B$ and $A$ with the $\fr$-adic topology, and
view $B$ as a $B$-submodule in $A$ via the embedding $b\mapsto b\cdot 1_A$.

\begin{cor}\label{getzex} Let $\star: A\times A \to A$ be a $B$-bilinear, continuous
associative  (not necessarily commutative)
product that makes $1_A$ the unit element.
Then, the conclusion of Theorem \ref{gmthm} holds
for $HP^B_\idot(A)$.
\end{cor}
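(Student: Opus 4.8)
The plan is to re-run the \emph{Construction of the Gauss--Manin connection} carried out just before the statement of Theorem~\ref{gmthm}, replacing every tensor product, every algebra of noncommutative differential forms, and every object $\oba$, $\Om(A;B)$, $\boba$, $\bom(A;B)$ by its $\fr$-adic completion. Since $\star$ is $B$-bilinear, the image of $B=\k\lll x_1,\dots,x_n\rrr$ under $b\mapsto b\cd 1_A$ is a \emph{central} subalgebra of $(A,\star)$, so the only hypothesis of Theorem~\ref{gmthm} not directly available in this setting is the freeness of $A/B$ as a $B$-module, which I would replace by \emph{topological} freeness: fixing a $\k$-basis $\{1_A\}\cup\{e_i\}_{i\in I}$ of $A_0$ exhibits $A$, as a $B$-module, as the $\fr$-adic completion of $B\cd 1_A\oplus\bigl(\bigoplus_{i\in I}B\cd e_i\bigr)$, so $A/B$ is topologically free over $B$. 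This is exactly what the argument of \S\ref{pfgm} uses.

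Granting this, the first task is the topological analogue of the isomorphism \eqref{grass} (equivalently \eqref{gras}). Here $\Om^\hdot\com B$ is to be read as the \emph{completed} de Rham algebra of $B=\k\lll x_1,\dots,x_n\rrr$, i.e.\ the free super-commutative $B$-algebra on the finitely many odd generators $\d x_1,\dots,\d x_n$; in particular each $\Om^i\com B$ is a free $B$-module of finite rank $\binom{n}{i}$, so the completed tensor product $\bom^{_\hdot}(A;B)\,\widehat{\otimes}_B\,\Om^i\com B$ is merely a finite direct sum of copies of $\bom^{_\hdot}(A;B)$ and presents no difficulty. One then checks, just as in \S\ref{pfgm} but with a topological $B$-module basis of $A/B$ in place of an honest basis, that \eqref{grass} is an isomorphism of double complexes for the differentials $\d\o_B\Id$ and $t\cd\bi_\Delta\o_B\Id$ (that $t\cd\bi_\Delta$ preserves the filtration $F^\hdot$ still holds because, by Lemma~\ref{biDelta}(i), the commutators occurring in $\bi_\Delta(\om)$ vanish in $\oba$).

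A convenient feature of the present setting is that the filtration $F^\hdot(\boba\ttc)$ by powers of the ideal $\ldp\d B\rdp$ is \emph{finite}: in $\oba$ each $\d x_j$ is odd and super-commutes with every element of $\Om^\hdot A$ (its super-commutator lies in $[\Om^\hdot A,\Om^\hdot B]$), so any product of $n+1$ of the generators $\d x_1,\dots,\d x_n$ vanishes, whence $F^{n+1}(\boba\ttc)=0$. Thus the spectral sequence of this filtration converges with no topological subtlety. By the previous paragraph its first page is $\bigoplus_i\bom^{_\hdot}(A;B)\,\widehat{\otimes}_B\,\Om^i\com B$ with differential $\d+t\cd\bi_\Delta$, its second page is $H^\hdot\bigl(\bom(A;B)\ttc,\ \d+t\cd\bi_\Delta\bigr)\o_B\Om^\hdot\com B$, and the induced differential $\nabla$ on the second page obeys the Leibniz identities displayed just before the Corollary; its degree-zero component is the desired connection $\nabla_{GM}$ of \eqref{nab}, and flatness follows by expanding $\nabla\ccirc\nabla=0$ (which is $d_2\ccirc d_2=0$ in the spectral sequence) via the Leibniz rule together with $\ddr\ccirc\ddr=0$. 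Finally, to read $\nabla_{GM}$ as a connection \emph{on $HP^B_\idot(A)$} one needs the identification $H^\hdot\bigl(\bom(A;B)\ttc,\ \d+t\cd\bi_\Delta\bigr)\cong HP^B_{-\idot}(A)$; this is the $\fr$-adically completed relative version of Theorem~\ref{main}, which I would obtain by running the argument of \S\ref{pf2} verbatim with completed tensor products, the Karoubi operator and the Cuntz--Quillen harmonic decomposition being manifestly compatible with the $\fr$-adic topology.

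The hard part is bookkeeping rather than a new idea: one must verify that passing to $\fr$-adic completions does not disturb the constructions of \S\ref{pfgm} and \S\ref{pf2} --- in particular that \eqref{grass} remains an isomorphism and that $\bom(A;B)\ttc$ still computes relative periodic cyclic homology --- and that topological freeness of $A/B$, rather than literal freeness, is enough at every step. The finiteness of the filtration $F^\hdot$ disposes of the only serious convergence concern, so I do not expect a genuine obstruction.
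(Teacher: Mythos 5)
Your proof is correct and takes essentially the same approach as the paper's, which simply observes that the only thing to verify is that the map \eqref{grass} remains an isomorphism, and that this follows because the argument of Lemma \ref{frlem} still works when $A/B$ is only topologically free over $B$. Your write-up spells out the implicit details (centrality of $B$ under $\star$, the correct reading of $\Omega^1\com B$ as continuous K\"ahler differentials, the completed version of Theorem \ref{main}) and adds the useful but not strictly necessary observation that the filtration $F^\hdot$ by powers of $\ldp\d B\rdp$ has length at most $n+1$ since the $\d x_j$ are odd and super-central in $\Omega^B A$.
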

\end{rem}

\section{Proofs}\label{pf_main}

\subsection{The Karoubi operator}\label{hoh} 
Throughout this section,
we fix  an algebra $A$ and abbreviate 
$\Om^n:={\Om^n  A} $ and $\Om:=\oplus_n\,\Om^n$.

Given an $A$-bimodule $M$, 
 put
$M\br:= M/[A,M]=H_0(A,M)$. 
In particular, an algebra homomorphism $A\to B$ makes
$B$  an $A$-bimodule. In such a case, one has
a canonical projection $B\br=B/[A,B]\onto B_\cyc=B/[B,B]$.
This applies  for $B=\omh$. So, we get
a natural projection $\omh \br\to\DR^\hdot A$,
which is not 
an isomorphism, in general.

Following Cuntz and Quillen \cite{CQ2},
we consider a diagram
$$
\xymatrix{
{{\Om^{0}}\;} \ar@<1ex>[r]^<>(0.5){\d}&
{\;{\Om^{1}}\;} \ar@<1ex>[r]^<>(0.5){\d}\ar[l]^<>(0.5){\b}&
{\;{\Om^{2}}\;} \ar@<1ex>[r]^<>(0.5){\d}\ar[l]^<>(0.5){\b}&
\ar[l]^<>(0.5){\b}\ldots.
}
$$

Here, the de Rham differential $\d$ and  the  Hochschild differential $\b$,
defined in \eqref{b}, are related via an important {\em  Karoubi
operator}
 $\ka: {\Om^\hdot  }\to{\Om^\hdot  }$ \cite{Ka}.
The latter is defined by the formula
$\ka:\al\,\d a\mto(-1)^{\deg\al}\,\d a\,\al\,$ if $\deg\al>0$,
and $\ka(\al)=\al$ if $\al\in {\Om^0 }$.
By \cite{Ka},\cite{CQ1}, one has
$$\b\ccirc\d +\d\ccirc \b=\Id-\ka.$$

It follows that $\ka$ commutes with both $\d$ and $\b$.  Furthermore,
it is easy to verify (see \cite{CQ1} and the proof of Lemma
\ref{ka_iso} below) that the Karoubi operator descends to a
well-defined map $\ka: \Om^n  \br\to\Om^n  \br$, which is
essentially a cyclic permutation; specifically, in $\Om^n  \br$, we
have
$$\ka(\,\al_1\,\al_2\,\ldots\,\al_{n-1}\,\al_n\,)=
(-1)^{n-1}\al_n\,\al_1\,\al_2\,\ldots\,\al_{n-1},\quad\forall
\al_1,\ldots,\al_n\in\Om^1.
$$

Let $(-)^\ka$ denote taking $\ka$-invariants.
In particular,  write $(\Om^\hdot   )^\ka\br:=
[(\Om^\hdot  )\br]^\ka\sset 
(\Om^\hdot   )\br$. 

\begin{prop}\label{bkappa} For any $n\geq 1$, we have
an equality
$$\bi_\Delta=(1+\ka+\ka^2+\ldots+\ka^{n-1})\ccirc \b
\quad\op{as}\;\;\op{maps}\en{{\Om^n  }}\;\to\;{\Om^{n-1}  }.
$$

Furthermore, the map $\bi_\Delta$ fits into a canonical
short exact sequence
$$0\map H^n(\Om^{\,}\a,\b)\map \DR^n A\stackrel{\bi_\Delta}\too 
[A,\Om^{n-1} A]^\ka\map 0.
$$
\end{prop}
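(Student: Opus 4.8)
The plan is to reduce everything to the single operator identity of the first assertion, so I would prove that identity first. Both $\bi_\Delta$ and $(1+\ka+\dots+\ka^{n-1})\circ\b$ are $\k$-linear maps $\Om^n A\to\Om^{n-1}A$ which annihilate $[A,\Om^n A]$: the first because it factors through $\DR^n A$, itself a quotient of $\Om^n A/[A,\Om^n A]$; the second because $[A,\Om^n A]=\b(\Om^{n+1}A)$ and $\b\circ\b=0$. Hence it suffices to compare the two maps on the classes of the generators $a_0\,\d a_1\cdots\d a_n$. On such an element I would expand $\b$ via \eqref{b}, apply $\ka$ repeatedly using its signed cyclic–permutation description on $(\Om^\hdot A)\br$, and match the resulting sum term-by-term with the explicit formula for $\bi_\Delta$ in Lemma~\ref{biDelta}(i). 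This is elementary but is the combinatorially heaviest point; conceptually it is the analogue, for the pair $(\b,\bi_\Delta)$, of the classical identity presenting Connes' operator $\B$ as a cyclic norm composed with the de Rham differential $\d$.

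Granting the identity, I would organize the exact sequence around the descended Hochschild differential. Write $(\Om^nA)\br=\Om^nA/[A,\Om^nA]$. Since $[A,\Om^nA]=\b(\Om^{n+1}A)$ and $\b^2=0$, the map $\b$ factors as $\bar\b\colon(\Om^nA)\br\to\Om^{n-1}A$, with $\ker\bar\b=\ker(\b|_{\Om^nA})/\b(\Om^{n+1}A)=H^n(\Om^\hdot A,\b)=H_n(A,A)$ by \cite{CQ2}. The operator $\ka$ commutes with $\b$, descends to $(\Om^nA)\br$, and acts there as a signed cyclic permutation with $\ka^n=\id$. Recall further (Karoubi~\cite{Ka}; cf.~\cite{CQ1}) that $\DR^nA$ is exactly the $\ka$-coinvariant quotient of $(\Om^nA)\br$; as $\operatorname{char}\k=0$, the norm $N=1+\ka+\dots+\ka^{n-1}$ identifies coinvariants with invariants, so $\DR^nA\cong\bigl((\Om^nA)\br\bigr)^\ka$, and under this identification the first assertion says precisely that $\bi_\Delta$ becomes the restriction of $\bar\b$ to $\bigl((\Om^nA)\br\bigr)^\ka$ (using $\bar\b\circ N=N\circ\bar\b$). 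The statement thus reduces to computing the kernel and image of $\bar\b|_{((\Om^nA)\br)^\ka}$.

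For the image: $\im\bi_\Delta\subseteq[A,\Om^{n-1}A]$ is visible from Lemma~\ref{biDelta}(i), and $\im\bi_\Delta$ is $\ka$-stable because $\ka\circ N=N+(\ka^n-1)$ while $(\ka^n-1)\circ\b=0$ (the latter since $\b$ factors through $(\Om^nA)\br$, where $\ka^n=\id$); hence $\im\bi_\Delta\subseteq[A,\Om^{n-1}A]^\ka$. Conversely, given $x\in[A,\Om^{n-1}A]^\ka$, write $x=\b\omega$; then $\bi_\Delta(\bar\omega)=N(\b\omega)=Nx=nx$ because $\ka x=x$, so $\tfrac1n\bar\omega$ is a preimage and $\bi_\Delta$ is onto $[A,\Om^{n-1}A]^\ka$. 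For the kernel: $\ker\bar\b=H_n(A,A)$ already lies inside $\bigl((\Om^nA)\br\bigr)^\ka$, because $\ka-1=-(\b\,\d+\d\,\b)$ induces the zero map on $\b$-homology; therefore $\ker\bigl(\bar\b|_{((\Om^nA)\br)^\ka}\bigr)=\ker\bar\b=H_n(A,A)$. Assembling the pieces gives the canonical short exact sequence
$$
0\longrightarrow H^n(\Om^\hdot A,\b)\longrightarrow \DR^n A\ \xrightarrow{\ \bi_\Delta\ }\ [A,\Om^{n-1}A]^\ka\longrightarrow 0,
$$
in which the left injection is the one induced by $\ker(\b|_{\Om^nA})\subseteq\Om^nA\twoheadrightarrow\DR^nA$.

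The step I expect to be the main obstacle is the explicit verification of $\bi_\Delta=(1+\ka+\dots+\ka^{n-1})\circ\b$: it hinges on getting the numerous signs right — in \eqref{b}, in the definition of $\ka$, and in the reduced-contraction formula \eqref{bi_formula} — and on careful bookkeeping of cyclic rotations modulo $[A,\Om^\hdot A]$. Once that identity is in place, the remaining arguments are short, provided one may freely invoke the standard facts that $(\Om^\hdot A,\b)$ computes Hochschild homology, that $\ka$ descends to an order-$n$ signed cyclic permutation of $(\Om^nA)\br$, and that $\DR^nA$ is the $\ka$-coinvariant quotient of $(\Om^nA)\br$.
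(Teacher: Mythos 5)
Your proof is correct and follows essentially the same route as the paper: verify the operator identity $\bi_\Delta=(1+\ka+\dots+\ka^{n-1})\circ\b$ on generators via Lemma~\ref{biDelta}(i), then exploit the first identity in \eqref{ident} together with the facts that $[A,\Om]=\b\Om$, that $\ka^n=\id$ on $(\Om^n)\br$, and that the projection $(\Om^n)\br\onto\DR^nA$ restricts to an isomorphism on $\ka$-invariants (Lemma~\ref{ka_iso}). The only differences are cosmetic: you re-derive the kernel identification self-containedly by noting $\ker\bar\b\subseteq((\Om^n)\br)^\ka$ (via $\ka-1=-(\b\d+\d\b)$), whereas the paper delegates this to Lemma~\ref{ka_iso}(iii), and you describe the image as ``$\ka$-stable'' where your computation actually proves the stronger statement that each element of $\im\bi_\Delta$ is $\ka$-\emph{invariant} (which is what you need). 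Both proofs handle the factor $n$ relating $\bi_\Delta$ and $\bar\b$ on invariants correctly, so the exact sequence follows.
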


We recall that the cohomology group $H^n(\Om^{\,}\a,\b)$ that occurs in
the above displayed short exact sequence is isomorphic, as has been
mentioned in \S\ref{hoch_sect}, to the Hochschild homology $H_n(A,A)$.
Thus, Theorem \ref{hoch_thm} is an immediate consequence of the short
exact sequence of the proposition.
\medskip

\underline{\sc{Special case:} $H_1(A,A)$.}\en\
For one-forms, the formula of Proposition \ref{bkappa} gives
$\bi_\Delta=\b$. Thus, using the
identification $H_1(A,A)=H^1(\omh, \b)$,
 the short exact sequence of  Proposition \ref{bkappa}
reads
\begin{equation}\label{b1}
0\map H_1(A,A)\map \DR^1 A\,\stackrel{\b=\bi_\Delta}\tooo\, [A,A]\map 0.
\end{equation}

The short exact  sequence \eqref{b1} may be obtained in an
alternate way as follows.
We apply  the right exact functor $(-)\br$
to \eqref{fund}.
The corresponding  long  exact sequence of Tor-groups
reads
$$\ldots\to H_1(A,A\otimes A)
\to H_1(A,A)\to(\Om^1 )\br\to
(A\otimes A)\br\stackrel{c}\to A\br\to 0.
$$
Now, by the definition of  Tor, 
$H_k(A,A\otimes A)=0$ for all ${k>0}$.
Also, we have natural
 identifications $(\Om^1 )\br=\DR^1 A$
and
$(A\otimes A)\br\cong A$.
This way, the  map $c$ on the right of the displayed formula
above may be identified with
the natural 
projection
$A\onto A/[A,A]$. Thus, $\Ker(c)=[A,A]$,
and the long  exact sequence above
reduces to the short exact sequence \eqref{b1}.

It is immediate from definitions that the
map $\b=\bi_\Delta$ in \eqref{b1} is given
 by Quillen's
formula $
u\,\d v\mapsto [u,v]$ \cite{CQ1}.
In particular, we deduce that
$\dis (\DR^1 A)\exact\sset
\Ker(\bi_\Delta)=H_1(A,A).
$

\subsection{Proof of  Proposition \ref{bkappa}}\label{pf1}
We first state and prove
a lemma which was implicit
in  \cite{CQ2} and \cite[\S 2.6]{L} and which will play an important role
in \S \ref{pf_main} below.

\begin{lem}\label{ka_iso} \vi The projection
  $(\Om^\hdot  )\br\to\DR^\hdot A $
restricts to a {\textbf{bijection}}
$(\Om^\hdot   )^\ka\br\iso\DR^\hdot  A $.

\vii The map $\b$ descends to 
a map $\b\br: (\Om^\hdot   )\br\to
{\Om^{\hdot-1} }$.

\viii  The kernel of the map  $\b\br: (\Om^\hdot   )^\ka\br\to
{\Om^{\hdot-1} }$,
the restriction of $\b\br$ to the space of
$\ka$-invariants, is isomorphic to $H^n(\Om^{\,}\a,\b)$.
\end{lem}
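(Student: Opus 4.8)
The plan is to establish the three parts essentially in order, using only the basic structure of $\Om^\hdot A = T_A(\Om^1 A)$ together with the explicit cyclic-permutation description of $\ka$ on $(\Om^\hdot)\br$ recorded just before the lemma. For part (i), I would work in a fixed degree $n\ge 1$ (degree $0$ being trivial, since $\ka=\Id$ and $[A,A]$ already equals $[\Om^0,\Om^0]$ there modulo the trivial overlap). Recall that $(\Om^n)\br = \Om^n/[A,\Om^n]$ while $\DR^n A = \Om^n/[\Om^\hdot,\Om^\hdot]^n$, so there is a canonical surjection $(\Om^n)\br \onto \DR^n A$, and the point is that its kernel is exactly the span of the super-commutators $[\al,\be]$ with $\al\in\Om^p$, $\be\in\Om^q$, $p,q\ge 1$, $p+q=n$, taken modulo $[A,\Om^n]$. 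Using $\al = a_0\,\d a_1\cdots\d a_p$ and writing $\be = \d a_{p+1}\cdots$ after absorbing its leading coefficient into $\al$ via an $[A,\Om^n]$-move, one sees that modulo $[A,\Om^n]$ the element $[\al,\be]$ becomes $\om - (-1)^{pq}\,(\text{cyclic shift of }\om)$, i.e. $(1-\ka^{\,p})\om$ up to sign, where $\om = \al_1\cdots\al_n$ with $\al_i\in\Om^1$. So the kernel of $(\Om^n)\br\onto\DR^n A$ is precisely $\sum_{p=1}^{n-1}\im(1-\ka^p) = \im(1-\ka)$ inside $(\Om^n)\br$, because $\ka$ has finite order $2n$ (in fact $\ka^{2n}=\Id$ on $(\Om^n)\br$, since $\ka^n = (-1)^{n-1}\Id$ there). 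Over a field of characteristic $0$ one then uses the averaging idempotent $e := \frac1{2n}\sum_{j=0}^{2n-1}\ka^j$ (or, more economically, the projector onto the eigenvalue-$1$ generalized eigenspace) to split $(\Om^n)\br = (\Om^n)^\ka\br \,\oplus\, \im(1-\ka)$; this is exactly the statement that the restriction of the projection to $(\Om^n)^\ka\br$ is a bijection onto $\DR^n A$.

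For part (ii), the assertion is that $\b$ descends from $\Om^\hdot$ to a map $(\Om^\hdot)\br\to\Om^{\hdot-1}$, i.e. that $\b$ kills $[A,\Om^\hdot]$. This is a one-line verification from the defining formula \eqref{b}: for $a\in A$ and $\al\in\Om^{n}A$, $\b(a\al - \al a) = \b\big((a\al - \al a)\big)$ and a direct computation with the Leibniz-type rule for $\b$ shows the result lies in $[A,\Om^{n-1}A]=0$ after passing to the quotient — more precisely one uses that $\b$ is, up to the Karoubi relation $\b\d+\d\b = 1-\ka$, essentially the operation $\al\,\d a\mapsto (-1)^n[\al,a]$, which visibly lands in commutators, so its further composition with the quotient by $[A,\Om^{n-1}]$ on the target makes everything consistent; since no target quotient is actually being taken here, the cleaner route is to check directly that $\b([A,\Om^n])=0$ in $\Om^{n-1}$ using the graded Leibniz property of $\b$ (it is a graded derivation-like operator only up to $\ka$, so one just expands both terms of $\b(a\al-\al a)$ and cancels). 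I'll denote the resulting map $\b\br$.

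For part (iii), the claim is that the kernel of $\b\br\colon(\Om^n)^\ka\br\to\Om^{n-1}$ is $H^n(\Om^\hdot A,\b)$, the $n$-th cohomology of the complex $(\Om^\hdot,\b)$ (which by \cite{CQ2} is $H_n(A,A)$). This is where the Karoubi identity $\b\d+\d\b = 1-\ka$ does the real work. First, on the level of $\Om^\hdot$ itself one has the two-step complex picture: $\Ker(\b\colon\Om^n\to\Om^{n-1})$ contains $\im(\b\colon\Om^{n+1}\to\Om^n)$, and $H^n(\Om^\hdot,\b)$ is their quotient. I would show that under the quotient map $\Om^n\to(\Om^n)\br$ the subspace $\Ker\b$ maps into $(\Om^n)^\ka\br$ — this follows because $\b$ commutes with $\ka$, so $\ka$ acts on $\Ker\b$, and on $(\Om^n)\br$ a $\b$-cycle is automatically $\ka$-invariant once one observes (using $\b\d+\d\b=1-\ka$ and that $\d$ raises degree) that $(1-\ka)$ applied to a $\b$-cycle equals $\b\d(\text{cycle})$, which is a $\b$-boundary, hence vanishes in the appropriate quotient... and here is precisely the subtle point. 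The honest argument is: on $(\Om^n)\br$, the operator $\ka$ acts with $\ka^n = (-1)^{n-1}$, so $\b\br$-cycles need a clean description. I would identify $\Ker(\b\br|_{(\Om^n)^\ka\br})$ with $\{\text{$\b$-cycles in }\Om^n\}/\{\text{things in }[A,\Om^n]\text{ that happen to be $\b$-cycles and $\ka$-invariant}\}$ and then show the latter denominator is exactly the image of $\b\colon\Om^{n+1}\to\Om^n$ intersected appropriately — using once more $1-\ka = \b\d+\d\b$ to convert "$\ka$-invariant and in $[A,\Om^n]$" statements into "$\b$-exact" statements. Concretely: if $\eta\in\Om^n$ satisfies $\b\eta=0$ and its class in $(\Om^n)^\ka\br$ is zero, then $\eta\in[A,\Om^n] = \im(1-\ka)$ on $\Om^n$ modulo higher corrections, so $\eta = (1-\ka)\mu = (\b\d+\d\b)\mu$ for some $\mu$, and then $\b\eta = 0$ forces (after a short manipulation with $\b^2=0$) that $\eta$ is $\b$-exact. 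Conversely a $\b$-exact element clearly dies in cohomology and is a cycle. The map $H^n(\Om^\hdot,\b)\to\Ker(\b\br|_{(\Om^n)^\ka\br})$ is therefore a well-defined bijection.

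\textbf{Expected main obstacle.} The genuine difficulty is concentrated in part (iii), specifically in pinning down \emph{exactly} which $\b$-boundaries and $\b$-cycles survive under the double passage to $\ka$-invariants and to the $[A,-]$-quotient. The bookkeeping hinges entirely on the Karoubi identity $\b\d + \d\b = 1-\ka$ and on characteristic $0$ (to have the averaging projector $e$ and to control the finite-order operator $\ka$ on each $(\Om^n)\br$); getting the boundary-versus-cycle matching right — rather than merely off by the image of $(1-\ka)$ — is the step that requires care, and it is essentially the computation that Cuntz–Quillen do implicitly and that \cite[\S 2.6]{L} sketches. Parts (i) and (ii) are, by comparison, routine: (i) is the statement that averaging over $\ka$ splits off $\DR$ inside $(\Om)\br$, and (ii) is an immediate consequence of \eqref{b} landing in commutators.
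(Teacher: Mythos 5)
Your overall architecture matches the paper's: establish that $\ka$ has finite order on $(\Om^n)\br$, split off the $\ka$-invariants by averaging, identify the complementary piece with the kernel of $(\Om^n)\br\onto\DR^n A$, and then chase the Karoubi relation $\b\d+\d\b=\Id-\ka$ to get (iii). But the execution has two concrete defects, one small and one more serious.

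The small one: you assert $\ka^n=(-1)^{n-1}\Id$ on $(\Om^n)\br$, which is wrong for $n$ even. Iterating $\ka(\al_1\cdots\al_n)=(-1)^{n-1}\al_n\al_1\cdots\al_{n-1}$ exactly $n$ times yields sign $(-1)^{n(n-1)}=1$, so $\ka^n=\Id$ on $(\Om^n)\br$. (The paper gets this directly from the first identity in \eqref{ident}, $\ka^n-\Id=\b\ka^n\d$, which lands in $\b\Om^{n+1}$.) Your conclusion $\ka^{2n}=\Id$ is still true, so the averaging projector works, but $\frac{1}{n}\sum_{j=0}^{n-1}\ka^j$ already suffices.

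The more serious gap is that you never pin down the identity $[A,\Om^n]=\b\Om^{n+1}$, and you in fact conflate it with the \emph{other} piece of $[\Om,\Om]$. The paper's proof opens with the two separate identities $[A,\Om]=\b\Om$ and $[\d A,\Om]=(\Id-\ka)\Om$; together they give $[\Om,\Om]=\b\Om+(\Id-\ka)\Om$, and hence $\Om\br=\Om/\b\Om$ and $\DR^\hdot A=\Om\br/(\Id-\ka)\Om\br$. These identifications do all the work: part (ii) is then immediate (since $\b^2=0$, $\b$ descends from $\Om/\b\Om$ — no "graded Leibniz" computation needed, and your hedging there is a symptom of missing the identity), and part (iii) is a one-liner: if $\eta\in\Ker\b$ dies in $(\Om^n)\br$, then $\eta\in[A,\Om^n]=\b\Om^{n+1}$, so $\eta$ is exact, done. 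Instead, your argument for (iii) writes "$\eta\in[A,\Om^n]=\im(1-\ka)$ on $\Om^n$ modulo higher corrections" and then routes through $(1-\ka)\mu=(\b\d+\d\b)\mu$; but $[A,\Om^n]$ and $(\Id-\ka)\Om^n$ are genuinely different subspaces, and the "modulo higher corrections" hedge signals that you felt the mismatch without locating its source. Make the distinction between $[A,\Om]$ and $[\d A,\Om]$ explicit and (ii)–(iii) collapse to the paper's version.
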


\begin{proof}[Proof of Lemma] 
The argument below follows
the proof of \cite[Lemma 2.6.8]{L}.

From definitions, we get
$[A,\Om]=\b\Om$ and $[\d A,\Om]=(\Id-\ka)\Om$.
Hence,  we obtain, cf.~\cite{CQ1}:
$$[\Om,\Om]=[A,\Om]+[\d A,\Om]=\b\Om+(\Id-\ka)\Om.
$$
 We deduce 
that
$\Om\br=\Om/\b\Om$ and
$\dis\DR^\hdot A =\Om/[\Om,\Om]=\Om\br/(\Id-\ka)\Om\br$.
In particular, since $\b^2=0$, the map $\b$ descends to a well
defined map $\b\br: \Om\br=\Om/\b\Om\to\Om$.

Further, one has the following
standard identities \cite[\S2]{CQ2}, on $\Omega^n$ for all $n \geq 1$:
\begin{equation}\label{ident} \ka^n-\Id=\b\ccirc\ka^n\ccirc \d,
\qquad\ka^{n+1}\ccirc \d=\d.
\end{equation}

The  Karoubi operator $\ka$ commutes with $\b$, and
hence induces a well-defined endomorphism of
 the vector space
 $\Om^n/\b\Om^n,\, n=1,2,\ldots$. Furthermore,
from the first identity in \eqref{ident}
we see that $\ka^n=\Id$ on
 $\Om^n/\b\Om^n$.
Hence, we have a direct sum decomposition
$\Om\br=(\Om\br)^\ka\oplus(\Id-\ka)\Om\br$.
It follows that  the natural
projection $\Om\br=\Om/\b\Om\onto \DR^\hdot A =\Om\br/(\Id-\ka)\Om\br$
restricts to an isomorphism
$(\Om\br)^\ka\iso\DR^\hdot A$.
Parts (ii)--(iii) of  Lemma  \ref{ka_iso} are
clear from the proof of \cite{L}, ~Lemma 2.6.8. \end{proof}

\begin{proof}[Proof of Proposition \ref{bkappa}.]
 The  first statement  of the proposition is immediate from
the formula of Lemma \ref{biDelta}(i).
To prove the second statement, we 
exploit the first identity in \eqref{ident}. Using
the formula for $\bi_\Delta$ and the fact
that $\b$ commutes with $\ka$, we compute that
\begin{equation} \label{km1iD}
(\ka-1)\ccirc\bi_\Delta=\b\ccirc(\ka-1)\ccirc(1+\ka+\ka^2+\ldots+\ka^{n-1})
=\b\ccirc(\ka^n-1)=\b^2\ccirc\ka^n\ccirc \d=0.
\end{equation}

Hence, we deduce that the image of $\bi_\Delta$ is contained
in $(\b\Om)^\ka$.
Conversely, given any element $\al=\b(\beta)\in (\b\Om)^\ka$,
we find that
$$\bi_\Delta(\beta)=
(1+\ka+\ka^2+\ldots+\ka^{n-1})\ccirc\b\beta=n\cdot\b\beta=n\cdot\al.
$$
Thus, 
$\op{Im}(\bi_\Delta)=(\b\Om)^\ka=[A,\Om]^\ka$, since $\b\Om={[A,\Om]}$.
Furthermore, since
$(1+\ka+\ka^2+\ldots+\ka^{n-1})\ccirc \b = n
\b$ on
$(\Om^\hdot)^\ka\br$, the two maps
have the same kernel.
The exact sequence of the proposition  now
follows from Lemma \ref{ka_iso}.
\end{proof}

\subsection{Harmonic decomposition}  \label{harms}
Our proof of Theorem \ref{main} is an adaptation of the strategy used in
\cite[\S2]{CQ2}, based on the {\em harmonic decomposition}
\begin{equation} \label{harmdec}
\bom=P\bom\oplus P^\perp\bom,\quad\text{where}\quad
P\bom:=\Ker(\Id-\ka)^2,\quad P^\perp\bom:=\op{Im}(\Id-\ka)^2.
\end{equation}

The differentials $\B, \b$, and $\d$ commute with $\ka$, hence
preserve the harmonic decomposition.

It will be convenient to introduce  two 
degree preserving linear maps  ${\mathsf N},\ {\mathsf N}!:\
\bom\to\bom$, such that, for any $n\geq 0$, 
\begin{equation}\label{Ndfn}
{\mathsf N}|_{\bom^{_n}} \en \text{ is multiplication by $n$},\quad\text{and}
\quad {\mathsf N}! |_{\bom^{_n}} \en\text{ is multiplication by $n!$.}
\end{equation} 
Then, 
\begin{equation}\label{agree}
{\mathsf{(i)}}\en
\B={\mathsf N}\,\d\, P,
\qquad\text{and}\qquad
{\mathsf{(ii)}}\en\bi_\Delta=\b\, {\mathsf N}\, P.
\end{equation}

Here, equation (i) is proved  by Cuntz and Quillen
\cite[\S 2, formula (11)]{CQ2}
using  the second identity in
\eqref{ident}.

\begin{proof}[Proof of \eqref{agree}{\em (ii)}] First,
 we use that $\b$ commutes with $\ka$.
Therefore, applying \eqref{km1iD},  we find
$i_\Delta \ccirc (\Id-\ka)^2 =$ $(\ka-1) \ccirc i_\Delta \ccirc (\ka - 1) =
0$. We conclude that the operation $\bi_\Delta$
annihilates $P^\perp\bom$.

It remains to show that, on $P\bom^n$, one has
 $\bi_\Delta=(n-1)\cdot\b$. To this end,
 let $\al\in \bom^{_n}$. From the first  identity in
\eqref{ident}, $\al-\ka^n(\al)\in\b\bom$.
Hence, 
$\b\al-\ka^n(\b\al)\in \b^2\bom=0$,
 since $\b^2=0$.
Thus, the operator $\ka$ has finite order on
$\b\bom$, and hence on $\b(P\bom)$.
 But, for any operator $T$ of finite order,
$\Ker(\Id-T)=\Ker((\Id-T)^2)$.
It follows  that, if $\al\in P\bom^{_n}$,
then $\b\al\in\Ker((\Id-\ka)^2)=\Ker(\Id-\ka)$.
We conclude that the element $\b\al$ is
fixed by $\ka$. Hence,
$(1+\ka+\ka^2+\ldots+\ka^{n-1})\ccirc \b\al=
n\cdot \b\al$. Therefore, by Proposition \ref{bkappa},
$\bi_\Delta(\al)=
n\cdot \b\al$,
and \eqref{agree}(ii) is proved. 
\end{proof}

\subsection{Proof of Theorem \ref{main}}\label{pf2}
Since the harmonic decomposition is stable under
all four differentials $\B, \b, \d$, and $\bi_\Delta$,
we may analyze the homology of each of the direct summands,
$P\bom$ and $P^\perp\bom$, separately. 

First of all, we know that  $\B=0$ on $P^\perp \Omega$,
by \eqref{agree}(i), and moreover
it has been shown by Cuntz and Quillen \cite[Proposition 4.1(1)]{CQ2}
that $(P^\perp \Omega, \b)$ is acyclic.
Furthermore, since the complex $(\bom,\d)$ is acyclic (see \cite[\S1]{CQ2} or
\cite{CBEG} formula (2.5.1)), we deduce the
following. 
\begin{equation}\label{acy} \text{\em Each of
  the complexes}\en
(P\bom,\d)\en \text{\em and} \en (P^\perp\bom,\d)\en \text{\em is
  acyclic.}  \end{equation}

Now,
the map $\bi_\Delta$ vanishes on
$P^\perp\bom$
by \eqref{agree}(ii). Hence, on $P^\perp\bomt$, we have
$\d+t\cdot \bi_\Delta=\d$.
 Therefore, we conclude using \eqref{acy}
that  $(P^\perp\bom[t], \d)$, and hence also $(P^\perp\bom[t], \d+t\cdot
 \bi_\Delta)$, are
acyclic complexes.

Thus, to complete the proof of the theorem, we must compare cohomology
of the complexes $\dis(P\bomt, \d+t\cdot \bi_\Delta\bigr)$ and
$\dis(P\bomt, \B+t\cdot \b)$.  We have $\dis {\mathsf N} \cdot\d +
({\mathsf N}+1)^{-1} \cdot t \cdot \bi_\Delta = \B + t\b.  $
Post-composing this by ${\mathsf N}!$ (see \eqref{Ndfn}), we obtain
$\dis ({\mathsf N}!) \cdot (\d + t \cdot \bi_\Delta) = (\B + t \cdot
\b) \cdot ({\mathsf N}!).  $ We deduce the following isomorphism of
complexes which completes the proof of the theorem:
$${\mathsf N}!:\
(P\bomt,\, \d+t\cdot \bi_\Delta\bigr)\ \iso \
(P\bomt,\, \B+t\cdot \b). \eqno\Box$$

\subsection{Negative and ordinary cyclic homology}\label{ss:nch}
It is possible to extend  Theorem \ref{main}
 to the case of (nonperiodic) cyclic homology and negative cyclic homology
 using harmonic decomposition.
To explain this,  put
$$
(\bomt)_{_+}\ :=\  \underset{m \geq 0}\bplus \ \left(\prod\nolimits_{i < m}\
 t^i \Omega^{m-2i}\right), \qquad
(\bomt)_{_{\geq 0}} \ :=\  \underset{m \geq 0}\bplus \ \left(\prod\nolimits_{i \leq
m}\
 t^i \Omega^{m-2i}\right).
$$

It follows from definitions that  cyclic homology and
negative cyclic homology, respectively, may be defined in terms of the following complexes (see, e.g., \cite[Chapters 2--3]{L}):
\begin{equation}\label{hchc}
HC_\idot = H^{-\hdot}\big(\bomt / (\bomt)_{_{+}},\ \B + t \cdot \b\big), \quad
HC^-_\idot = H^{-\hdot}\big((\bomt)_{_{\geq 0}},\ \B + t \cdot \b\big).
\end{equation}

Furthermore, we introduce two other homology theories, ${}^\heartsuit\!
HC,\ {}^\heartsuit\! HC^-$. The corresponding
homology groups are defined as the homology groups of
complexes similar to \eqref{hchc}, but where the differential
 $\B + t \cdot \b$ is replaced by $\d+t  \cdot \bi_\Delta$.

 Now, in terms of the projection to the harmonic part (recall
 \eqref{harmdec}),  we have the following.
\begin{prop} There are natural graded  $\k[t]$-module isomorphisms
$$
P({}^\heartsuit\! HC_\idot) \cong HC_\idot \quad\oper{and}\quad
P({}^\heartsuit\! HC^-_\idot) \cong HC^-_\idot.
$$
\end{prop}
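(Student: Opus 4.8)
The plan is to rerun the argument of \S\ref{pf2}, now keeping track of the truncations that cut out $HC_\idot$ and $HC^-_\idot$ from the periodic complex. First I would observe that the harmonic decomposition \eqref{harmdec} splits $\bom$ into $\ka$-stable \emph{graded} subspaces (graded by form-degree), whereas the submodules $(\bomt)_{_+}$ and $(\bomt)_{_{\geq 0}}$ are cut out by conditions on the bigrading by form-degree and power of $t$ alone. Hence the sub/quotient complexes $(\bomt)_{_{\geq 0}}$ and $\bomt/(\bomt)_{_+}$ inherit the harmonic decomposition, and, since $\B$, $\b$, $\d$ and $\bi_\Delta$ all commute with $\ka$, this splitting is respected by both $\B + t\cdot\b$ and $\d + t\cdot\bi_\Delta$. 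Thus each of $HC_\idot$, $HC^-_\idot$, ${}^\heartsuit\! HC_\idot$ and ${}^\heartsuit\! HC^-_\idot$ is canonically a direct sum of a $P$-summand and a $P^\perp$-summand, and $P(-)$ is the projection onto the $P$-summand. It then suffices to prove: (a) on the $P$-part the two differentials define isomorphic complexes, compatibly with the truncations; and (b) the $P^\perp$-summands of $HC_\idot$ and of $HC^-_\idot$ vanish.

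For (a) I would reuse the operator $\mathsf N!$ of \S\ref{pf2}. The rescaling identity proved there, $(\mathsf N!)\circ(\d + t\cdot\bi_\Delta) = (\B + t\cdot\b)\circ(\mathsf N!)$ on $P\bom$, exhibits $\mathsf N!$ as an isomorphism of complexes $(P\bomt,\ \d + t\cdot\bi_\Delta)\iso(P\bomt,\ \B + t\cdot\b)$; moreover $\mathsf N!$ is $\k[t,t^{-1}]$-linear, preserves form-degree (hence also the power of $t$ and the total degree), and, as $\ch\k=0$, is invertible. Being degree-preserving, $\mathsf N!$ carries each truncation sub/quotient complex to itself, and so induces graded $\k[t]$-module isomorphisms $P({}^\heartsuit\! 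HC_\idot)\cong P(HC_\idot)$ and $P({}^\heartsuit\! HC^-_\idot)\cong P(HC^-_\idot)$.

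For (b), note that on $P^\perp\bom$ the Connes operator $\B=\mathsf N\,\d\,P$ of \eqref{agree} vanishes, so on the $P^\perp$-part of either truncated complex the differential $\B + t\cdot\b$ reduces to $t\cdot\b$. I would then grade the truncated complex by the quantity ``(power of $t$) $+$ (form-degree)'', which $t\cdot\b$ preserves, and check that for each of $(\bomt)_{_+}$ and $(\bomt)_{_{\geq 0}}$ every graded slice is, after the evident relabelling of powers of $t$, the full complex $(\bom,\b)$. Since $(P^\perp\bom,\b)$ is acyclic by Cuntz--Quillen \cite[Prop.~4.1(1)]{CQ2}, the $P^\perp$-part of each truncated complex is a direct sum of copies of the acyclic complex $(P^\perp\bom,\b)$, hence acyclic; this yields $HC_\idot=P(HC_\idot)$ and $HC^-_\idot=P(HC^-_\idot)$, and combined with (a) it proves the proposition.

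The crux is the identification in (b) of the graded slices with the full complex $(\bom,\b)$: one must verify that $(\bomt)_{_+}$ and $(\bomt)_{_{\geq 0}}$ are \emph{saturated} for the ``(power of $t$) $+$ (form-degree)'' grading, so that no slice is a proper brutal truncation of $(\bom,\b)$ --- a brutal truncation of an acyclic complex acquires homology at its end, and the desired $P^\perp$-acyclicity would then fail. This is a short combinatorial check against the definitions of the two truncations; everything else is bookkeeping already present in \S\ref{pf2} and \S\ref{harms}.
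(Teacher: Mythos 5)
Your part (a) — restricting the operator $\mathsf N!$ to the truncated sub/quotient complexes to get $P({}^\heartsuit\!HC)\cong P(HC)$ and $P({}^\heartsuit\!HC^-)\cong P(HC^-)$ — is correct and is exactly what the paper does. The difference, and the gap, is in part (b), where you try to establish $P^\perp HC=0=P^\perp HC^-$ by slicing the truncated complexes along the grading ``(power of $t$) $+$ (form-degree)'' and identifying each slice with a full copy of $(\bom,\b)$.

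That slicing argument does not go through, for two reasons. First, the grading you propose does not give a direct sum decomposition of $(\bomt)_{_{\geq 0}}$, $\bomt/(\bomt)_{_+}$, or of $\bomt$ itself: by \eqref{bomteq} each homogeneous piece $(\bomt)^n$ is an infinite \emph{product} $\prod_i t^i\bom^{n-2i}$, so a single element can have nonzero components in infinitely many of your $j$-slices, and acyclicity of the slices would not entail acyclicity of the whole. Second — and this is precisely the ``saturation'' issue you flag as the crux — if one does compute the slices of the truncated complexes, they come out as brutal truncations of $(\bom,\b)$, not the full complex: fixing $j=(\text{power of }t)+(\text{form-degree})$ inside either $(\bomt)_{_+}$ or $(\bomt)_{_{\geq 0}}$ pins down a finite range of form-degrees, so the slice is a bounded piece $\bom^{a}\to\cdots\to\bom^{b}$ and, as you yourself note, such a piece of the acyclic complex $(P^\perp\bom,\b)$ acquires homology at its cut end. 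So the combinatorial check you defer to does not succeed; the conclusion $P^\perp HC=0=P^\perp HC^-$ is true, but it cannot be reached this way. The paper sidesteps all of this by instead using the direct-sum splittings
$\bomt=\bomt/(\bomt)_{_+}\oplus(\bomt)_{_+}$ and $\bomt=\bomt/(\bomt)_{_{\geq 0}}\oplus(\bomt)_{_{\geq 0}}$,
observing that these splittings are stable under $t\cdot\b$ (though crucially \emph{not} under $\d$, which is why $P^\perp({}^\heartsuit\!HC)$ can be nonzero). Since $(P^\perp\bomt,\,t\cdot\b)$ is acyclic — this follows directly from acyclicity of $(P^\perp\bom,\b)$, working component-by-component in the product — each direct summand is acyclic, giving $P^\perp HC=0=P^\perp HC^-$ without ever slicing. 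You should replace the slicing step by this direct-summand argument; your part (a) can then be kept as is.
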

\begin{proof} There are natural  splittings 
{\small
$$\bomt = \frac{\bomt}{(\bomt)_{_+}}\ \bplus\ (\bomt)_{_+},\quad
  \bomt = \frac{\bomt}{(\bomt)_{_{\geq 0}}}\ \bplus\ (\bomt)_{_{\geq 0}}.$$}
These splittings
are stable under the differential $t \cdot \b$.
It follows  that
  $P^\perp HC = 0 = P^\perp HC_-$, since $\B = 0$ on $P^\perp \bom$,
  and  the differential $t \cdot \b$ is acyclic here.

 Observe further that, while $\bi_\Delta$ is zero on $P^\perp \bom$, the above
  splittings do \emph{not} stabilize $\d$.  So, we can pick up some
  nonzero groups
  $P^\perp ({}^\heartsuit\! HC)$, or
$P^\perp ({}^\heartsuit\! HC^-)$.
  However, restricting to the harmonic part, the proof of Theorem \ref{main}
  yields an isomorphism of complexes
 $$\big(P[\bomt / (\bomt)_{_+}],\ \B + t \cdot \b\big)\
\cong\
\big(P[\bomt / (\bomt)_{_+}],\ \d + t \cdot \bi_\Delta\big),$$
 and
also a  similar isomorphism involving $(\bomt)_{_{\geq 0}}$.  
\end{proof}
\begin{rem}
  In view of these results, it is reasonable to ask why our
  construction of the Gauss-Manin connection does not extend to
  ordinary cyclic homology and negative cyclic homology. The point is
  that, in these cases, one must replace $\Omega^{i}\com B$ in
  \eqref{grass} by $t^{-\lfloor i/2 \rfloor} \Omega^{2i}\com B$, and
  so one does not obtain a connection over $B$.  (Note also that the
  results of this subsection require passing to the harmonic part,
  which does not seem to fit well into our construction of the
  Gauss-Manin connection.)
\end{rem}

\subsection{Proof of Theorem \ref{gmthm}}\label{pfgm} Let $B\sset A$ and assume 
that   $A/B$ is a
free $B$-module.  The main step of the proof is the following.

\begin{lem}\label{frlem}  Let
$\{a_s,\ s\in {\scr S}\}$ be a basis  of $A/B$ as a free
 $B$-module. Then, the  elements below form a basis for $\Omega^B(A)$ as a free
$\Omega^\hdot(B)$-module:
\begin{equation}\label{ombasis}
a_{s_0} \d a_{s_1} \d a_{s_2} \cdots \d a_{s_m}, \quad \d a_{s_1} \d a_{s_2} \cdots
\d a_{s_m},\quad
s_j\in\scr S.
\end{equation}
\end{lem}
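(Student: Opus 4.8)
The plan is to reduce the lemma to the single statement that the natural map
$$
\Phi\colon\ \bplus_{w}\ \Om^\hdot\com B\cd w\ \too\ \oba
$$
from the free $\Om^\hdot\com B$-module on the symbols \eqref{ombasis} (indexed by $w$) to $\oba=\Om^B(A)$ is an isomorphism; here $\Om^\hdot\com B$ acts through the super-central embedding $\Om^\hdot\com B\hookrightarrow\oba$ recorded above, so its left and right actions on $\oba$ agree, and $\Phi$ sends a symbol to the corresponding element of $\oba$. I would fix the $B$-module decomposition $A=B\oplus V$, $V=\bplus_s Ba_s$, cut out by the $a_s$, and use throughout that in $\oba$ the subalgebra $B$ is central and each $\d b$ ($b\in B$) is super-central, both being immediate from $([\Om^\hdot A,\Om^\hdot B])$ annihilating $[\Om^\hdot A,B]$ and $[\Om^\hdot A,\d B]$. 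I would also invoke the Cuntz--Quillen identification $\Om^n_B A\cong A\o_B T^n_B(A/B)$ of one-sided $A$-modules: since $A$ and $A/B$ are free $B$-modules on $\{1\}\cup\{a_s\}$ and on $\{a_s\}$, it shows that $\Om(A;B)=\Om^\hdot_B A$ is a \emph{free} $B$-module on precisely the symbols \eqref{ombasis}.

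For surjectivity of $\Phi$, note that $\oba$ is spanned by the images of the monomials $a_0\,\d a_1\cdots\d a_n$. Substituting $a_i=\beta_i+\sum_s c_{is}a_s$ ($\beta_i,c_{is}\in B$) and expanding, one brings such a monomial into an $\Om^\hdot\com B$-combination of the elements \eqref{ombasis} by a double induction --- on the form-degree $n$, then on the number of indices $i\ge 1$ for which $a_i$ is not among the $a_s$ --- using (i) that central elements of $B$ and super-central exact forms $\d b$ may be extracted to the left into $\Om^\hdot\com B$, and (ii) the Leibniz identity $(\d a)\,a'=\d(aa')-a\,\d a'$, which replaces an interior $0$-form by terms of equal or smaller form-degree closer to normal form (with a short auxiliary reduction for $0$-forms stranded in the middle of a product, i.e.\ expressions $a_0\,\d a_1\cdots\d a_p\;a\;\d a_{p+1}\cdots\d a_{p+q}$). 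I expect this step to be routine bookkeeping.

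Injectivity of $\Phi$ is the main obstacle, and it turns on the degree-one part $(\oba)^1$. Since $\Om^1_B A$ is free as a one-sided $A$-module on the classes of the $\d a_s$, the rule ``class of $\d a_s\mapsto\d a_s$'' defines a left $A$-module section of the quotient $(\oba)^1\onto\Om^1_B A$, whose image is $\bplus_s A\,\d a_s$. Dually, there is a left $A$-module retraction $\nu\colon(\oba)^1\to A\cd(\d B)$ --- the ``projection onto the $\d B$-part'', given on the left $A$-basis $\{\d f\}$ of $\Om^1 A$ adapted to $A=B\oplus V$ by $\d b\mapsto 1\o\ddr b$ ($b$ in a basis of $B$) and $\d(b a_s)\mapsto a_s\o\ddr b$. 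This map is not a derivation, but it \emph{does} obey the Leibniz rule with respect to multiplication by elements of $B$, and that is exactly what is needed for $\nu$ to annihilate the sub-bimodule $A[\d A,B]A$ defining $(\oba)^1$, since the latter is generated using only $b\in B$. Together these give a direct-sum decomposition of left $A$-modules
$$
(\oba)^1\ \cong\ A\cd(\d B)\ \bplus\ \bplus_s\ A\,\d a_s,
$$
in which $A\cd(\d B)$ is a \emph{sub}-bimodule, identified as a bimodule with $A\o_B\Om^1\com B$ via $a\,\d b\mapsto a\o\ddr b$ (using centrality of $B$ and super-centrality of $\d B$), while the complementary summand $\bplus_s A\,\d a_s$ maps isomorphically, as a bimodule, onto $\Om^1_B A$ under the quotient.

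Finally I would feed this into the presentation $\oba=T_A\big((\oba)^1\big)\big/\big((\d a)(\d b)+(\d b)(\d a)\colon a\in A,\ b\in B\big)$ --- valid because $([\Om^\hdot A,\Om^\hdot B])$ is generated in degrees $\le 2$, the degree-one generators already producing $T_A((\oba)^1)$ and the degree-two generators $[\d a,\d b]$ imposing super-centrality of $\d B$, which then propagates to all degrees. Once $\d B$ is super-central, any tensor can be normalized by sliding all $\d b$-factors to one side, where --- super-commuting among themselves --- they span only $\La^\hdot_B\Om^1\com B=\Om^\hdot\com B$, while the remaining $A$- and $\d a_s$-factors assemble, through the complementary summand and its bimodule identification with $\Om^1_B A$, into $\Om(A;B)=\Om^\hdot_B A$; the stray $\d B$-terms that the right $A$-action spills off the non-sub-bimodule summand $\bplus_s A\,\d a_s$ are themselves super-central and cause no harm. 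This yields an isomorphism of graded right $\Om^\hdot\com B$-modules $\oba\iso\Om(A;B)\o_B\Om^\hdot\com B$ carrying the symbols \eqref{ombasis} to $\{\bar w\o 1\}$, and since $\Om(A;B)$ is $B$-free on those same symbols this is precisely the asserted freeness, so $\Phi$ is an isomorphism. The delicate point --- the step I expect to cost the most --- is this last assembly: the degree-one splitting is only one-sided, so one must check that no relations beyond super-centrality of $\d B$ survive in higher degrees. I would make that rigorous by a Bergman diamond-lemma argument on $\oba$ with a monomial order adapted to $A=B\oplus V$, whose (finite, routine) overlap ambiguities encode only associativity of $A$, the Leibniz rule, and $\d^2=0$. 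As a byproduct, passing to the associated graded for the $(\d B)$-adic filtration, the same isomorphism shows the natural surjections \eqref{grass} to be isomorphisms, which is how the lemma enters the proof of Theorem \ref{gmthm}.
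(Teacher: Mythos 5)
Your overall strategy---exhibit a normal form for elements of $\oba$ over $\Omega^{\hdot}_{\mathrm{com}}B$ and show the rewriting is consistent---is the same as the paper's, but the route you take through the degree-one decomposition, the presentation $\oba \cong T_A\bigl((\oba)^1\bigr)/\bigl([\d A,\d B]\bigr)$, and a Bergman diamond-lemma argument is considerably more circuitous than necessary, and you leave the hard part (confluence of the rewriting) as a plan rather than a proof. The paper sidesteps all of this by defining the rewriting map $\pi:\Omega A\onto\Omega'=\Omega B\otimes_\k\Omega_{\mathscr S}$ directly on $\Omega A$ (before passing to the quotient $\oba$), by the rule ``replace $\d(ba_s)$ with $(\d b)a_s + b\,\d a_s$ and slide all $b,\d b$ to the front.'' One then checks a single clean \emph{multiplicative property}, $\pi(\alpha\beta)=\pi\bigl((\mu\circ\pi)(\alpha)\cdot(\mu\circ\pi)(\beta)\bigr)$ where $\mu:\Omega'\hookrightarrow\Omega A$ is the multiplication section; this is verified directly via the bar-type expansion of $\d f_1\cdots\d f_m\cdot f_{m+1}$ and plays exactly the role your diamond-lemma ambiguities would. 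From multiplicativity it is immediate that $\pi$ kills $([B,\d A])$, $([\d B,A])$, $([\d B,\d A])$, hence factors through $\pr:\Omega A\onto\oba$, and since $\pi\circ\mu=\Id$ with $\pi$ surjective, the induced map is forced to be an isomorphism---no filtration, no associated graded, no explicit one-sided degree-one splitting required. The place you flag as the ``step I expect to cost the most'' is precisely where the paper's argument is decisive: the multiplicative identity of $\pi$ is the confluence statement you are after, but in a form that avoids setting up a monomial order and enumerating overlaps. If you want to salvage your route, you should at least note that your surjectivity argument for $\Phi$ already amounts to constructing a section, so the work is entirely in the retraction; and the natural way to build that retraction uniformly in all degrees is to imitate the paper's $\pi$ rather than to bootstrap from degree one.
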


\begin{proof}
  Observe that the short exact sequence $B\to A\to A/B$ splits as a
  sequence of $B$-modules, since $A/B$ is a free $B$-module. By abuse
  of notation we will view $A/B$ as a subspace of $A$ via a fixed
  choice of such splitting.  Hence, the elements $1$ and $\{a_s,\ s\in
  {\scr S}\}$ give a $B$-module basis of $A$.

Now, $\Omega^m A \cong A \otimes (A/\k)^{\otimes m}$ is spanned over $\k$ 
by elements of the form
\begin{equation}\label{e:oma-basis}
f_0 \d f_1 \cdots \d f_m,
\end{equation}
where each $f_i$ is of the form $b a_s$ for some $b \in B$ and $s \in \mathscr S$, except for $f_0$ which can also be an element of $B$ itself.

Now, let $\Omega_{\mathscr S} \subseteq \Omega A$ be the subspace spanned by
elements of the form \eqref{ombasis}.  Let $\Omega' := \Omega B \otimes_\k
\Omega_{\mathscr S}$ be the free $(\Omega B)$-module it generates.  We
have a projection $\pi: \Omega A \onto \Omega'$, given on elements
\eqref{e:oma-basis} by rewriting $\d (b a_s) = (\d b) a_s + b (\d
a_s)$, and then moving all $b$ and $\d b$ terms to the front of the
expression (via supercommutators). It is easy to see that this is
well-defined.
Thus, Lemma \ref{frlem} reduces to the next claim, which
we prove below.
\end{proof}
\begin{claim}\label{cl:frlem}
The map $\pi$ descends to an isomorphism $\pi': \Omega^B A \iso \Omega'$.
\end{claim}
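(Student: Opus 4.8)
The plan is to show $\pi$ kills the relations defining $\Om^B A := \Om^\hdot A/\ldp[\Om^\hdot A,\Om^\hdot B]\rdp$ and then to produce an explicit two-sided inverse to the induced map $\pi'$. The first half is essentially formal: by definition $\pi$ moves all $b$'s and $\d b$'s (for $b\in B$) to the front using supercommutators, and it records exactly the ``normal form'' of a form with respect to the chosen $B$-module splitting $A = B\oplus (A/\mathscr S)$. Thus two elements of $\Om^\hdot A$ that differ by a supercommutator of something with an element of $\Om^\hdot B$ get the same image under $\pi$ — the point is that commuting an element of $\Om^\hdot B$ past anything is precisely the move $\pi$ already performs, so $\pi(\ldp[\Om^\hdot A,\Om^\hdot B]\rdp) = 0$. (One also checks the ideal relations $\d(ba_s) = (\d b)a_s + b\,\d a_s$ are respected; this is built into the definition of $\pi$.) Hence $\pi$ factors through a surjection $\pi'\colon \Om^B A \onto \Om'$.

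For injectivity I would construct the inverse map directly. Since the generators \eqref{ombasis} are manifestly elements of $\Om^\hdot A$, and since $\Om' = \Om B\otimes_\k \Om_{\mathscr S}$ is the free $(\Om B)$-module on them, the assignment sending a pure tensor $\beta\otimes\omega$ (with $\beta\in\Om B$, $\omega\in\Om_{\mathscr S}$) to the image of $\beta\cdot\omega$ in $\Om^B A$ is well-defined: freeness of $\Om'$ over $\Om B$ means there is nothing to check beyond $\k$-bilinearity and $\Om B$-linearity, both clear. Call this map $\sigma\colon\Om'\to\Om^B A$. That $\pi'\circ\sigma = \id$ is immediate because $\pi$ fixes each generator \eqref{ombasis} (it is already in normal form) and is $\Om B$-equivariant after passing to $\Om^B A$ — indeed in $\Om^B A$ one may freely move $\Om B$-factors to the front. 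The reverse composite $\sigma\circ\pi' = \id$ follows because, on the spanning set \eqref{e:oma-basis} of $\Om^\hdot A$, expanding $\d(ba_s)$ and commuting $B$- and $\d B$-terms to the front is exactly an identity that already holds in the quotient $\Om^B A$ (the commutators used all lie in $\ldp[\Om^\hdot A,\Om^\hdot B]\rdp$), so $\sigma(\pi(x)) = \bar x$ for every $x$.

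The only real content, and the step I expect to be the main obstacle, is verifying that $\pi$ is \emph{well-defined} as a map out of $\Om^\hdot A$ before quotienting — i.e.\ that the rewriting procedure ``expand every $\d(ba_s)$, then push all $b,\d b$ to the front'' does not depend on the order in which one does it. Concretely, a form $f_0\,\d f_1\cdots\d f_m$ may be reduced in several ways, and one must check the Leibniz expansions and the supercommutator moves are confluent. I would handle this by giving $\pi$ a manifestly order-independent description: use the $\k$-linear splitting $A/\mathscr S\hookrightarrow A$ to write each $f_i = b_i + a_i$ with $b_i\in B$ and $a_i$ in the span of $\{a_s\}$ (for $i\geq 1$; $f_0$ arbitrary in $A$), then define $\pi(f_0\,\d f_1\cdots\d f_m)$ by fully multilinearly expanding $\d f_i = \d b_i + \d a_i$ and, in each resulting monomial, collecting the $B$- and $\d B$-factors to the left with the appropriate Koszul signs. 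This makes $\pi$ patently well-defined on $\Om^\hdot A$ and patently $B$-linear; well-definedness on the quotient $\Om^B A$, surjectivity, and the inverse then go through as above. Once $\pi'$ and $\sigma$ are seen to be mutually inverse, Claim~\ref{cl:frlem}, and hence Lemma~\ref{frlem}, follows.
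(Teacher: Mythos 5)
Your high‐level architecture matches the paper's: you recognize that one must (a) show $\pi$ factors through the quotient $\Omega^B A$, and (b) exhibit $\sigma := \pr\circ\mu$ as a two‐sided inverse, and your computation that $\pi'\circ\sigma=\id$ and $\sigma\circ\pi'=\id$ once (a) is granted is essentially what the paper does. But (a) is exactly where the real content lies, and there you have a genuine gap.

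You assert that $\pi\big(\ldp[\Omega^\hdot A,\Omega^\hdot B]\rdp\big)=0$ is "essentially formal" because "commuting an element of $\Omega^\hdot B$ past anything is precisely the move $\pi$ already performs." This is only a heuristic. The kernel of $\pr$ is the \emph{two-sided ideal} generated by $[\Omega A,\Omega B]$, so you must control $\pi(\gamma\,[\omega,\beta]\,\gamma')$ for arbitrary $\gamma,\gamma'\in\Omega A$ — and $\pi$ is not an algebra map, so this does not reduce to single supercommutators. Worse, when you multiply a form on the right (e.g.\ form $\gamma\beta$ or $\omega\beta$ with $\beta\in\Omega^0 B$), you must first re-express the product in the standard form $g_0\,\d g_1\cdots\d g_k$ via the bar-type expansion \eqref{e:bar-exp}, and only then can you apply your rewriting procedure; showing that the output agrees (up to sign) with $\pi$ of the product taken in the other order is a genuine computation, not an observation. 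The paper handles this by first proving the multiplicative surrogate $\pi(\alpha\beta)=\pi\bigl((\mu\circ\pi)(\alpha)(\mu\circ\pi)(\beta)\bigr)$ (their \eqref{e:pi-mult}) — which requires exactly the \eqref{e:bar-exp} bookkeeping — and then using it to reduce the two-sided ideal to the three single commutator spaces $[B,\d A]$, $[\d B,A]$, $[\d B,\d A]$, which are then checked directly. Your proposal contains no analogue of this multiplicative property and hence does not actually establish that $\pi$ descends.

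You also misplace the main obstacle. Well-definedness of $\pi$ on $\Omega A$ itself is not the hard point: as you yourself note, one can define $\pi$ multilinearly using the splitting $A\cong B\oplus B^{(\mathscr S)}$ (equivalently, on a $\k$-basis of $\Omega^m A\cong A\otimes(A/\k)^{\otimes m}$), and the paper correspondingly dismisses this with "it is easy to see that this is well-defined." The step that actually costs effort — and the one your proposal would need to supply to close the argument — is the factorization through $\Omega^B A$, i.e.\ the multiplicative property and the resulting annihilation of the ideal.
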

\begin{proof}
  Consider the multiplication map $\mu: \Omega' =\Omega B \otimes_\k
  \Omega_{\mathscr S} \to \Omega A$. It is easy to see that $\mu$ is
  injective and that the composition $\pi \circ \mu$ is the identity.
  This realizes $\Omega'$ as a direct summand of $\Omega A$.  Let
  $\pr: \Omega A \onto \Omega^B A$ be the projection.  Note that the
  composition
\[
\displaystyle \Omega A \mathop{\to}^\pi \Omega' \mathop{\to}^{\mu} \Omega A \mathop{\to}^{\pr} \Omega^B A
\]
coincides with $\pr$, by the definition of $\pi$.  

We will show that $\pi$ factors through $\pr$, i.e., $\pi = \pi' \circ \pr$
for $\pi': \Omega^B A \to \Omega'$.
 From this and the above, it follows that $(\pr \circ \mu)
\circ \pi' = \Id$.  Since $\pi'$ is
surjective, $(\pr \circ \mu)$ and $\pi'$ are therefore inverse isomorphisms.

Equivalently, we need to show that $\pi$ annihilates the kernel of
$\Omega A \onto \Omega^B A$, i.e., the two-sided ideal generated by
$[B, \Omega A]$ and $[\d B, \Omega A]$.  We first prove the following
multiplicative property of $\pi$:
\begin{equation}\label{e:pi-mult}
  \pi(\alpha \beta) = \pi \bigl( (\mu \circ \pi)(\alpha) (\mu \circ \pi)(\beta) \bigr).
\end{equation}
To prove this, it is enough to assume that $\alpha$ and $\beta$ are
elements of the form \eqref{e:oma-basis}.  Say $\beta = f \d \beta'$
where $\beta'$ is also of this form.  Then, we need to show that
$\pi(\alpha \cdot f \cdot \d \beta') = \pi(\pi(\alpha) \cdot f \cdot
\pi(\d \beta'))$.  Since, in either expansion, each term $\d (b a)$ in
$\d \beta'$ is replaced by $(\d b) a + b (\d a)$, and then after
everything else is so expanded, the $\d b$ and $b$ are moved to the
front, it follows that one can ignore the $\d \beta'$ and merely prove
that $\pi(\alpha \cdot f) = \pi(\alpha) \cdot f$.  Now, to expand
$\pi(\alpha \cdot f)$, we need to use
\begin{multline}\label{e:bar-exp}
\d f_1 \cdots \d f_m \cdot f_{m+1} \\ = (-1)^m f_1 \d f_2 \cdots \d f_{m+2}
 + \sum_{i=1}^m (-1)^{m-i} \d f_1 \cdots \d f_{i-1}
\d (f_i f_{i+1}) \d f_{i+2} \cdots \d f_{m+1}.
\end{multline}
So, we have to show that applying \eqref{e:bar-exp} and then applying
$\pi$ is the same as applying $\pi$ to $\alpha$, multiplying by $f$,
and applying \eqref{e:bar-exp}.  This is a straightforward
verification.

Now, using \eqref{e:pi-mult}, the claim reduces to showing that
$\pi([B,\d A]) = \pi([\d B, A]) = \pi([\d B, \d A]) = 0$.  The first
two identities are equivalent, since $\d [B, A] = 0$, and are
straightforward to verify.  It is immediate that $\pi([\d B, \d A]) =
0$. So this reduces the claim to \eqref{e:pi-mult}. \qedhere
\end{proof}

It is instructive, for the proof of Theorem \ref{gmthm} presented below,
 to have  in mind the situation of Remark \ref{getzrem}, where
  $B = \k\lll x_1, \ldots, x_n \rrr$ and
  $A = (A_0 \lll x_1, \ldots, x_n \rrr,\ \star)$. Then, $\{a_s\}$ is a
  $\k$-basis of $A_0$, and the (topologically-free version of)
  Lemma \ref{frlem}  becomes more obvious.  

\begin{rem}  More generally, given an arbitrary {\em regular} commutative algebra
  $B$ and a maximal ideal $\mathfrak{m} \subset B$, taking the
  $\mathfrak{m}$-adic completions $\wh{A}_{\mathfrak{m}}$ and
  $\wh{B}_{\mathfrak{m}}$ reduces to the above situation.
\erem

\begin{proof}[Completion of the proof of Theorem \ref{gmthm}]
We take $B$ as a ground ring and apply Theorem \ref{main}
(which applies over any commutative base ring 
containing $\k$).
We deduce that
 $$H^{-i}\big(\bom(A;B)\ttc,\
\d + t \bi_\Delta\big)
\ \cong\
HP_\idot^B(A), \qquad\forall i\in \Z.
$$

Now Lemma \ref{frlem} implies that the map in \eqref{grass} is an
isomorphism, since the basis for $\Omega^B A$ as a free
$\Omega^\hdot(B)$-module is also a basis for the associated graded
$\gr^i_F \oba$, and a $B$-module basis for
$\Omega^\hdot(A;B) = \gr^0_F \oba$.
The construction of the Gauss-Manin connection given in \S \ref{gmsec}
completes the proof of
the theorem.
\end{proof}

\begin{proof}[Proof of Corollary \ref{getzex}] We only need to show that,  in the present setting, the map in
\eqref{grass} is an isomorphism. For
that, we observe that the argument used in the proof
 goes through provided
that  $A$ is only {\em topologically} free over $B$, and our claim follows.
\end{proof}

\section{The Representation functor}\label{repf}
\subsection{Evaluation map}\label{ev_sec}
We fix a finite-dimensional $\k$-vector space $V$. Set
$\oEnd := \End_\k(V)$.  Given affine schemes $X$ and $S$, let
$X(S) = \Hom(S, X) = \Hom_{\k\text{-}\mathsf{alg}}(\k[X], \k[S])$ denote the
set of $S$-points of $X$.  

Given an algebra $A$, we may consider the set
$\Hom_{\k\text{-}\mathsf{alg}}(A,\oEnd)$ of all algebra maps
$\rho: A\to \oEnd$.  More precisely, to any {\em finitely
  presented}
associative $\k$-algebra $A$ we associate an affine scheme of finite
type over $\k$, to be denoted $\Rep(A,V)$,
such that
$\Rep(A,V)(B) \cong \Hom_{\k\text{-}\mathsf{alg}}(A,B\o \oEnd)$.  That
is, the $B$-points of $\Rep(A,V)$ correspond to families of
representations of $A$ parametrized by $\Spec B$.  Write
$\k[\Rep (A,V)]$ for the coordinate ring of the affine scheme
$\Rep (A,V)$, which will be always assumed to be {\em nonempty}.

The tensor product $\oEnd\o\k[\Rep
(A,V)]$ is an associative algebra of polynomial 
maps $\Rep(A,V) \rightarrow \oEnd$.
To each element $a\in A$, we associate the element $\wh{a} \in \oEnd
\o \k[\Rep(A,V)]$, which on the level of points, has the form $\dis\wh{a}:
\Rep(A,V)(B) \rightarrow \oEnd \o B,\ \wh{a}(\rho) = \rho(a)$.  This yields an algebra
homomorphism, called the {\em evaluation map}
$\ \dis\ev:\ A\too\oEnd \o \k[\Rep (A,V)],\ a\mto\wh{a}.
$

\subsection{Extended de Rham complex and equivariant cohomology}
Let $X$ be an affine scheme with coordinate ring $\k[X]$, tangent
module $\calT(X):=\Der(\k[X],\ \k[X])$, and module of K\"ahler
differentials $\Om^1(X)$. We write  $\Omega^\hdot(X)=
\La^\hdot_{\k[X]}\Om^1(X)$ for the
DG algebra of differential forms, equipped with the de Rham differential
$d_X$.

Let $\g g$ be a finite-dimensional Lie algebra, and let $\g g$ act on
$\k[\g g]$, the polynomial algebra on the vector space $\g g$, by the
adjoint action. We view $\k[\g g]$ as an even-graded algebra such that
the vector space of linear functions on $\g g$ is assigned degree 2.

Given a Lie algebra map $\g g\to\calT(X),\ e\mto\arr{e}$, we get a
$\g g$-action $\om\mto L_{\arr{e}}\om$ on $\Omega^\hdot(X)$, by the
Lie derivative.  This makes the tensor product
$\Omega^\hdot(X,\,\g g):=\Omega^\hdot(X)\otimes\k[\g g]$ a graded
algebra, equipped with the total grading and with the $\g g$-diagonal
action. Let $i_{\arr{e}}$ denote the contraction. Then, set
\begin{equation}
d_\g g:\ \Omega^\hdot(X,\,\g g)\too\Omega^{\hdot+1}(X,\,\g g),\quad
\om\o f\lto \sum\nolimits_{r=1}^{\dim \g g}\ (i_{{\arr{e}_r}}\om)\o
(e_r^*\cd f), \label{dgdfn}
\end{equation}
where $\{e_r\}$ and $\{e_r^*\}$ stand for dual bases of $\g g$ and
$\g g^*$, respectively.  This map restricts to a differential $d_\g g$
on $\Omega^\hdot(X,\,\g g)^{\g g}$, the graded subalgebra of $\g
g$-diagonal
invariants.

\begin{defn}
  A differential form $\om\in\Omega^\hdot(X)$ is called {\em basic}
  if, for any $e\in \g g$, both $L_{\arr{e}}\om=0$ and
  $i_{\arr{e}}\om=0$. Basic forms form a subcomplex
  $\Omega^\hdot\bas(X)\sset\Omega^\hdot(X)$ of the de Rham complex.

  Furthermore, define the $\g g$-{\em equivariant algebraic de Rham
    complex}
  of $X$ to be the complex \begin{equation}\label{dr_equiv} \bigl(\Omega^\hdot(X,\,\g
  g)^{\g g},\,d_{\text{DR}}+d_\g g), \qquad d_{\text{DR}}:=d_X\o
  \id_{\k[\g g]}.
\end{equation}
\end{defn}

We now return to the setup of \S\ref{ev_sec}. Thus we fix a
finitely-presented algebra $A$, a finite-dimensional vector space $V$,
and consider the scheme $\Rep(A,V)$.

Let  $G = \GL(V)$. This is an algebraic group over $\k$ that
 acts naturally on the algebra
$\oEnd$ by inner automorphisms, via conjugation. Hence, given an
algebra homomorphism $\rho: A\to \oEnd$ and $g\in G(\k)$, one
has a conjugate homomorphism
$g(\rho): a\mapsto g\cdot\rho(a)\cdot g\inv$. Then, the action
$\rho\mapsto g(\rho)$ makes $\rep$ a $G$-scheme (extending in the
obvious way to $B$-valued representations for any $B$).

Let $\g g:=\Lie G$ be the Lie algebra of $G$.
The action of  $ G$ on $\rep$ induces
a Lie algebra  map 
\begin{equation}\label{actx}
\act_A:\ \g g \map\calT(\Rep(A,V)),
\quad x\lto \arr{x}=\act_A(x).
\end{equation}
Thus, we may consider $\Omega^\hdot(\Rep(A,V),\,\g g)^{\g g}$,
 the corresponding 
 $\g g$-equivariant algebraic de Rham complex.

Now, thanks to Lemma \ref{biDelta}(ii), the map 
$\d+i_\Delta: \DR_t A\to\DR_t A$ squares to zero.
We call the resulting complex $\big(\DR_t A,\ \d+i_\Delta\big)$
the {\sf noncommutative equivariant de Rham complex}.
The second isomorphism of the following theorem,
which is the main result of this section,
shows that this complex is indeed a noncommutative
analogue of the equivariant de Rham complex \eqref{dr_equiv}.

 Recall the operator ${\mathsf N}$ from \eqref{Ndfn}.

\begin{thm}\label{rep_thm}
The evaluation map induces the following  canonical morphisms of complexes:
\begin{align}
\bigl(H_\idot(A,A),\ \B\bigr)&\stackrel{\ev}\too
\bigl(\Omega^\hdot\bas(\Rep(A,V)),\ ({\mathsf N} + 1)\ccirc d_{\text{DR}}\bigr),\quad\oper{and}\label{first}\\
\bigl(\DR^\hdot_t A,\, \d+i_\Delta\bigr)&\stackrel{\ev}\too
\bigl(\Omega^\hdot(\Rep(A,V),\,\g g)^{\g g},\ d_{\text{DR}}+d_\g
g\bigr).\label{second}
\end{align}
\end{thm}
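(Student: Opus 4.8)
The plan is to construct the two morphisms by directly analyzing what the evaluation homomorphism $\ev : A \to \oEnd \o \k[\Rep(A,V)]$ does on differential forms, and then chase down compatibility with all the differential operators in sight. First I would set up the natural target. Since $\ev$ is an algebra map, it extends by the universal property of $\Om^\hdot A$ (used in \eqref{AB} and \eqref{psi}) to a DG algebra map $\Om^\hdot A \to \Om^\hdot\!\bigl(\oEnd \o \k[\Rep(A,V)]\bigr)$. Now $\oEnd$ is a smooth (in fact separable) $\k$-algebra, so $\Om^1_\k(\oEnd \o \k[\Rep(A,V)])$ and the whole de Rham complex reduce, after passing to the commutator quotient, to the \emph{commutative} de Rham complex of the scheme $\Rep(A,V)$ with a twist by $\oEnd$; concretely, one uses the trace map $\oEnd \to \k$ to collapse matrix factors. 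Composing with $\ev$ and taking traces, one gets the familiar ``matrix-valued'' evaluation $\Om^\hdot A \to \Omega^\hdot(\Rep(A,V))$ (this is the map appearing in \cite{CBEG}, so I would quote their formulas). The key observation is how this interacts with the Karoubi operator $\ka$: on $\Rep(A,V)$ the trace symmetrizes, so $\ka$-invariants on the noncommutative side map \emph{isomorphically} onto their images, matching Lemma \ref{ka_iso}(i). Using Proposition \ref{bkappa} (which identifies $H_\idot(A,A)$ with $\Ker \bi_\Delta \subset \DR^\hdot A$, and in particular with the $\ka$-invariant harmonic part), I get the map on the level of the source of \eqref{first}.

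For \eqref{first}, the content is twofold: (a) the image lands in \emph{basic} forms, and (b) the Connes differential $\B$ on $H_\idot(A,A)$ is carried to $({\mathsf N}+1)\ccirc d_{\text{DR}}$. Part (a) is where the $G = \GL(V)$-equivariance enters: since $\ev$ intertwines the $A$-action with conjugation, the image of a class in $H_\idot(A,A) = \Ker \bi_\Delta$ is $\g g$-invariant (this is $L_{\arr x}$-closedness) and is killed by contraction $i_{\arr x}$ precisely because $\bi_\Delta$ vanishes on it — one checks that under $\ev$ the noncommutative contraction $\bi_\Delta$ becomes, up to the trace, the sum of the geometric contractions $i_{\arr{x}}$ over a basis of $\g g$, using the fundamental sequence \eqref{fund} and the definition \eqref{actx} of $\act_A$. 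This is really the identity $\bi_\Delta \sto \sum_r e_r^* \cd i_{\arr{e}_r}$ at the heart of the whole section. Part (b) follows from the harmonic decomposition formula \eqref{agree}(i), $\B = {\mathsf N}\,\d\,P$: on the $\ka$-invariant (harmonic) part where $H_\idot(A,A)$ lives, $\B = {\mathsf N}\,\d$, and $\d$ goes to $d_{\text{DR}}$ under $\ev$, while the degree-shift accounts for the ${\mathsf N}+1$ (the image sits in degree $n$, gets sent by $\d$ to degree $n{+}1$, and ${\mathsf N}$ on that target is multiplication by $n{+}1$).

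For \eqref{second}, I would argue more structurally. Under the isomorphisms \eqref{alternate}, $\DR^{2p,q}_t A$ is built from $(\Om^\hdot A)^{\o p}_\cyc$, and $\ev$ turns each tensor factor into a matrix-valued form on $\Rep(A,V)$; the cyclic symmetrization together with the trace turns $(\Om^\hdot A)^{\o p}_\cyc$ into $\Omega^\hdot(\Rep(A,V)) \o (\text{degree-}2p\text{ part of }\k[\g g])$ — this is exactly the bigrading matching ``$t$ has bidegree $(2,0)$'' with ``linear functions on $\g g$ have degree $2$.'' So $\ev$ gives a map $\DR_t A \to \Omega^\hdot(\Rep(A,V),\g g)$ landing in $\g g$-invariants by equivariance of $\ev$. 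Then I must check it intertwines $\d + i_\Delta$ with $d_{\text{DR}} + d_\g g$: the $\d$-vs-$d_{\text{DR}}$ part is the universal property again; the $i_\Delta$-vs-$d_\g g$ part is the same trace identity as above, now read in the form $i_\Delta \sto d_\g g$, comparing \eqref{iformula}/Lemma \ref{biDelta}(i) with \eqref{dgdfn}. One subtlety: the second map does \emph{not} need the harmonic projection or the ${\mathsf N}$-twist, because on $\DR_t A$ the contraction $i_\Delta$ is the ``full'' operation \eqref{iformula}, not the reduced $\bi_\Delta$, and correspondingly $d_\g g$ in \eqref{dgdfn} is the full equivariant term.

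The main obstacle I anticipate is bookkeeping the trace and cyclic-symmetrization carefully enough to verify the identity ``$\bi_\Delta \sto \sum_r e_r^* \cd i_{\arr{e}_r}$'' (equivalently $i_\Delta \sto d_\g g$) with correct signs and the correct factor; this is a computation of the type carried out in \cite{CBEG}, and getting the degree-counting constants (the ${\mathsf N}+1$ in \eqref{first}) to come out right requires keeping straight which forms live in which degree before and after applying $\d$. Everything else — well-definedness on commutator quotients, $G$-invariance of images, DG-compatibility of $\ev$ with $\d$ — follows formally from the universal property of noncommutative forms and the equivariance of the evaluation map, so I would state those quickly and concentrate the proof on the single contraction identity and its two avatars.
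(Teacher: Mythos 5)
Your proposal follows essentially the same route as the paper: both hinge on the identity that $\ev_\Omega \ccirc i_\Delta$ corresponds to $d_\g g$ (the paper's \eqref{evomeqn} combined with $\wh{t} = \Id \in \g g \o \g g^*$), deduce the basic-forms statement in \eqref{first} by restricting to $\ker\bi_\Delta$ via Theorem \ref{hoch_thm}, and extract the $({\mathsf N}+1)$ factor from the formula $\B = {\mathsf N}\,\d\,P$ on the harmonic part. One small caution: your aside that ``$\ka$-invariants on the noncommutative side map isomorphically onto their images'' is neither needed nor quite right as stated; what Lemma \ref{ka_iso}(i) actually gives is only the identification $\DR^\hdot A\cong(\Om^\hdot)^\ka_\natural$ of the \emph{source}, which, together with cyclicity of the trace, makes the map on $\DR^\hdot A$ well-defined without any injectivity claim about $\ev$.
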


We begin the proof with some general constructions. 

\subsection{Evaluation map on differential forms}
Observe that giving an algebra homomorphism $\rho: \k[t]\to B\o \oEnd$
amounts to specifying an arbitrary element $x=\rho(t)\in B \o \End$.
Thus,  $\Rep(\k[t],V)(B) = B \o \End$.

Similarly, for any algebra $A$, giving an algebra morphism
$\rho:A*\k[t]$ $\to B \o \oEnd$ amounts to giving a homomorphism
$A\to B \o \oEnd$ and an arbitrary additional element
$x=\rho(t)\in B \o \oEnd$.  We see that
$\Rep(A_t,V) \cong \Rep(A,V) \times \End$. Let $\pi$ denote
the second projection, which is $G$-equivariant.  We will use
shorthand notation
$$\Rep_t:=\Rep(A_t,V)=\Rep(A,V)\times\End,\quad\text{and}\quad\Rep:=\Rep(A,V).
$$

Let
$\Om^\hdot_\pi(\Rep_t)$
be the DG algebra of relative (with respect to $\pi$)
 algebraic differential forms on
the scheme $\Rep_t$ (in the ordinary sense of commutative algebraic
geometry).
By definition, 
\begin{equation}\label{rel}
\Om^\hdot_\pi(\Rep_t):=\La^\hdot_{\k[\Rep_t]}\,\Om^1_\pi(\Rep_t) \cong
\Om^\hdot(\Rep)
\otimes \k[\End].
\end{equation} 

Generalizing the construction of \S\ref{ev_sec},
 we now introduce an evaluation map
 on relative
differential forms. 
In more detail, given $n=0,1,2,\ldots$,
write $m:$
 $\oEnd^{\otimes (n+1)}\to\oEnd$ for the $n$-fold
multiplication map.
We define a map $\evom$  as the following composite:
\begin{align*}
\Om^n_{\k[t]}(A_t) =A_t\otimes (A_t/\k[t])^{\otimes n}&\ \stackrel{\ev}\to\
(\oEnd \o\k[\Rep_t])\otimes
\bigl(\oEnd \o\Om^1_\pi(\Rep_t)\bigr)^{\otimes n}\\
&\ \to\
\oEnd^{\otimes n+1} \o \bigotimes
\bigl(\La_{\k[\Rep_t]}^n\,\Om^1_\pi(\Rep_t)\bigr)
\stackrel{m\o\Id}\tooo
\oEnd \o\Om^n_\pi(\Rep_t).
\end{align*}

Any element in the image of this composite  is easily seen to
be $G$-invariant with respect to the $G$-diagonal action on
$\oEnd \o\Om^n_\pi(\Rep_t)$. Thus, the composite above yields
 a well-defined, canonical DG  algebra map
$$\evom:
\Om_t A\to
\bigl(\oEnd\o\Om^\hdot_\pi(\Rep_t)\bigr)^G\!,
\en\;
\al=a_0\,\d a_1\ldots \d a_n\mapsto \wh{\al}=\wh{a}_0\ \ddr \wh{a}_1\ldots  \ddr \wh{a}_n.$$

Furthermore, we have the linear function
 $\Tr: \oEnd \to\k, \,x\mapsto\Tr(x)$.
We  form the composite
\begin{equation}\label{phi}\Om_t A\stackrel{\evom}\too
\bigl(\oEnd\o\Om^\hdot_\pi(\Rep_t)\bigr)^G\stackrel{\Tr\o\Id}\tooo
 \bigl(\k\o\Om^\hdot_\pi(\Rep_t)\bigr)^G=\Om^\hdot_\pi(\Rep_t)^G,
\quad \al\mapsto
\Tr\wh{\al}.
\end{equation}

The above composite  vanishes on the (graded) commutator space
$[\Om_t A,\Om_t]$ $\sset\Om_t A$, due to symmetry of the trace function.
Therefore, the map in \eqref{phi}
descends to $\DR^\hdot(\Om_tA)$.

We remark next that the Lie algebra $\g g=\Lie G$ is nothing but the
associative algebra $\oEnd$ viewed as a Lie algebra.  Hence, using the
isomorphisms in \eqref{rel}, we can write
$$\Om^\hdot_\pi(\Rep_t)=\Om^\hdot(\Rep)\o\k[\End]=\Om^\hdot(\Rep)
\o\k[\g g]=\Om^\hdot(\Rep,\,\g g).$$
Thus, by the definition of the extended de Rham complex, $\DR^\hdot_tA$,
the composite in \eqref{phi}  gives a map
\begin{equation}\label{trace_om2}
(\Id\otimes\Tr)\ccirc\evom:\
\DR^\hdot_tA\map \Om^\hdot_\pi(\Rep_t)^G=
\Om^\hdot(\Rep,\,\g g)^{\g g}.
\end{equation}

\subsection{Proof of Theorem \ref{rep_thm}} 
It is clear that $\d$ is
 taken to $\ddr $ under \eqref{trace_om2}. Hence,
proving \eqref{second}, where the map `$\ev$' stands for
 $(\Id \o \Tr) \ccirc \evom$, amounts to  showing commutativity
of the diagram
\begin{equation}\label{diag}
\xymatrix{
\DR_tA\ar[d]_<>(0.5){i_\Delta}\ar[rrr]^<>(0.5){(\Id\otimes\Tr)\ccirc\evom}&&&
\Om(\Rep,\,\g g)^{\g g}\ar[d]_<>(0.5){d_{\g g}}\\
\DR_tA\ar[rrr]^<>(0.5){(\Id\otimes\Tr)\ccirc\evom}&&&
\Om(\Rep,\,\g g)^{\g g}.
}
\end{equation}

To see this, we note that, for any $a_0, \ldots, a_n \in A_t$,
one has
\begin{equation} \label{evomeqn}
  \ev_\Omega \ccirc i_{\Delta} [a_0\, \d a_1\, \d a_2 \ldots \d a_n] 
  = (\Id \o \Tr) \Bigl(\sum_{i=1}^n \wh{a}_0\  \ddr \wh{a}_1
\ldots  \ddr \wh{a}_{i-1}\ [\wh{t}, \wh{a}_i]\  \ddr \wh{a}_{i+1} 
\ldots  \ddr \wh{a}_n\Bigr).
\end{equation}
Next, note that $\hat t$ may be identified with the element
$\Id \in \g g \o \g g^* \cong \mathrm{End} \o \g g^* \subset \oEnd \o
\k[\g g]$. Furthermore, for any element $e \in \g g$ and any
$s \in \k[\Rep(A, V)] \o \End$, we evidently
have $i_{\arr{e}} (d s) = \ad e(s) = [(1 \o e), s]$. As a consequence, by
\eqref{dgdfn}, we obtain that the RHS of \eqref{evomeqn} may be identified
with $d_\g g((\Id \o \Tr) ({\wh a_0}\d {\wh a_1} \ldots\d {\wh a_n}))$,
as desired.

To prove \eqref{first}, $\ev$ becomes the restriction of
$(\Id \o \Tr) \ccirc \evom$ to
$\ker(i_\Delta) \subset \Omega^\hdot(\Rep)^{\g g}$ (recall Theorem
\ref{hoch_thm}).  Commutativity of \eqref{diag} together with Theorem
\ref{hoch_thm} immediately gives that this induces a morphism
$H_\idot(A, A) \to \Omega^\hdot_{basic}(\Rep(A,V))$. It remains
only to show that $\B$ is carried to $({\mathsf N}+1)\d$.  To see this, we use the
harmonic decomposition \eqref{harmdec}.  Under the quotient
$\bom^{_\bullet} A \onto \DR^\hdot A / \k$ (considering $\k$ to
be the span of the image of $1 \in A$), $P^{\perp} \bom$ is killed,
so the differential $B$ is carried to $({\mathsf N}+1) d_{_\DR}$.  Thus, on $\DR^\hdot A$,
the differential $B$ must reduce to the same as $({\mathsf N}+1) \d$ up to a
scalar.  However, since $\B$ has degree $+1$, the scalar must be zero.
So $\B = ({\mathsf N}+1)\d$ on $\DR^\hdot A$. Thus, the same is true
after passing to $\ker(i_{\Delta})$.

\section{Free products and deformations}\label{ncds}

\subsection{First order deformations based on free
products}\label{defor1}
Recall that, given an associative algebra $A$, we let $A_t=A*\k[t]$ and
write $I=A^+_t=\ldp t\rdp\sset A_t$ for the augmentation
ideal.

A {\em first order free product deformation} of an associative algebra $A$
is the structure of an  associative algebra  on the vector space
$A_t/I^2$ that makes  the vector space $I/I^2\sset A_t/I^2$
a two-sided ideal and that makes the natural
bijection below an algebra isomorphism,
$$\big(A_t/I^2\big)\big/(I/I^2)=A_t/I\ \iso\ A.$$

It is convenient to identify the vector space
$A_t/I^2$ with
$A\oplus (A\o A)$, using \eqref{AtA}.
Thus, we are interested in associative products
on the vector space $A\oplus (A\o A)$ that have
the following form:
\begin{equation}\label{firstass}
\bigl(u\oplus(u'\o u'')\bigr)\; \times\; 
\bigl(v\oplus(v'\o v'')\bigr)\
\stackrel{\starb}\lto\
uv\oplus\bigl(u'\o u''v +uv'\o v'' +\be(u,v)\bigr),
\end{equation}
where $\be: A\times A \to A\o A$ is a certain
$\k$-bilinear map.
 
These products are taken up to an equivalence.
Specifically, for any $\k$-linear map  $f: A\to A\o A$,
we define a linear bijection
$$\tilde{f}:\
A\oplus (A\o A)\too A\oplus (A\o A),\quad
 u\oplus(u'\o u'')\lto u\oplus(u'\o u''+ f(u)).$$
Given a product $\starb$ and
a map $f$, we define a new
 product by transporting the structure via $\tilde{f}$,
that is, by the formula
$\dis
x\star_{_\gamma}y:=\tilde{f}\inv(\tilde{f}(x)\starb\tilde{f}(y))$.
We say that the products $\star_{_\gamma}$ and $\starb$ are
 {\em equivalent}.

A routine calculation, completely analogous
to the  classical one  due to Gerstenhaber, now yields the following.\smallskip

\npb{A first product $\starb$ as in \eqref{firstass}
is associative $\Longleftrightarrow$
$\be\in C^2(A, A\o A)$ is a Hochschild two-cocycle with coefficients
in $A\o A$.}\smallskip

\npb{The products $\starb$ and $\star_{_\gamma}$ corresponding to  two-cochains $\be$ and
$\gamma$ are equivalent $\Longleftrightarrow$
$\be-\gamma$ is a  Hochschild coboundary.}
\medskip

\noindent
Thus,
similarly to   the classical  theory,  we obtain
a   classification of first order star product deformations
in terms of Hochschild cohomology as follows 

\begin{prop} Equivalence classes of 
 associative  products, as  in \eqref{firstass},  are in one-to-one correspondence
with the elements of $H^2(A,A\o A)$, the second  Hochschild cohomology
group
of the $A$-bimodule $A\o A$.\qed
\end{prop}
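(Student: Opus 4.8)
The plan is to reduce the proposition to the two displayed bullet points immediately preceding it, so that the real content is to establish those bullets. I would organize this as follows.

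First I would record, using the identification \eqref{AtA}, that a first-order free product deformation is precisely a product of the shape \eqref{firstass} for a unique $\k$-bilinear $\be\colon A\times A\to A\o A$, where throughout $A\o A$ carries its {\em outer} bimodule structure $b(a'\o a'')c=(ba')\o(a''c)$. I would also note that requiring the class of $1\in A_t$ to remain a two-sided unit forces $\be$ to be normalized, $\be(1,v)=\be(u,1)=0$; this is harmless, since the normalized Hochschild complex computes the same cohomology $H^\bullet(A,A\o A)$.

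Next, for the first bullet, I would expand the associativity identity for $\starb$ applied to $u\oplus\xi$, $v\oplus\eta$, $w\oplus\zeta$ and separate components. The $A$-component merely reproduces associativity of $A$, hence holds automatically; in the $A\o A$-component every term built solely out of the outer action cancels in pairs, and the identity collapses to
\[
u\cd\be(v,w)-\be(uv,w)+\be(u,vw)-\be(u,v)\cd w=0 ,
\]
which is exactly the condition that $\be\in C^2(A,A\o A)$ be a Hochschild $2$-cocycle for the outer bimodule structure. For the second bullet, I would compute the transported product $x\star_\gamma y=\tilde f\inv\!\bigl(\tilde f(x)\starb\tilde f(y)\bigr)$ for a $\k$-linear $f\colon A\to A\o A$; unwinding $\tilde f(u\oplus\xi)=u\oplus(\xi+f(u))$ and $\tilde f\inv(w\oplus\omega)=w\oplus(\omega-f(w))$ and using the outer action, one finds the $A$-component unchanged and the $A\o A$-component augmented by precisely $u\cd f(v)+f(u)\cd v-f(uv)$, i.e. $\gamma=\be+\delta f$ with $\delta$ the Hochschild differential (and $f$ normalized once we keep the unit). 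Hence two products of the form \eqref{firstass} are equivalent iff the corresponding cochains are cohomologous.

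Granting the two bullets, the proposition follows at once: the assignment $\starb\mapsto[\be]\in H^2(A,A\o A)$ is well defined and injective by the second bullet, and surjective because by the first bullet every $2$-cocycle $\be$ defines an associative product of the form \eqref{firstass}. The only care required is the sign bookkeeping in the two expansions and the discipline of using the outer action at every step; I do not expect a genuine obstacle, this being the verbatim free-product transcription of Gerstenhaber's classical computation, with the fussiest point being the bookkeeping with $\tilde f\inv$ in the verification of the second bullet.
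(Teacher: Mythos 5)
Your proof is correct and follows essentially the same path the paper takes: it reduces the proposition to the two bullet points stated just before it and establishes each by the direct Gerstenhaber-style computation (associativity $\Leftrightarrow$ cocycle, equivalence $\Leftrightarrow$ coboundary), which is precisely what the paper invokes as a ``routine calculation, completely analogous to the classical one due to Gerstenhaber.'' The extra remark about normalization of $\be$ at the unit is a harmless and sensible refinement but does not change the structure of the argument.
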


To study {\em higher order} free product deformations, we have
to introduce first  some new operations on Hochschild cohomology, to be
defined below.

\subsection{Operations on Hochschild cohomology of $A$ valued in the
  tensor algebra of $A$} For any algebra $A$, the natural embedding
$A\into A_t$ makes $A_t$ a graded $A$-bimodule.  Using the
identification \eqref{triv}, we may write $ A_t=\bigoplus_{k\geq 1}
A^{\o k}$. Here, the summand $A^{\o k}$ is assigned degree $2k-2$ and
is equipped with the {\em outer} $A$-bimodule structure defined by the
formula $b(a'\o u\o a'')c:=(ba')\o u\o (a''c)$, for any $a',a'',b,c\in
A$ and $u\in A^{\o (k-2)}$.

Let $C^\hdot(A, A_t)=\bigoplus_{p, k\geq 1}C^p(A,A^{\o k})$ be
the  Hochschild cochain complex with coefficients in the $A$-bimodule $A_t$.
Multiplication in the algebra
$A_t$ induces, for any $p,q,k,m\geq 1$, a cup product
$$
\cup:\ 
C^p(A,A^{\o k})\times C^q(A, A^{\o m})\too
C^{p+q}(A,A^{\o (k+m-1)}),
$$
This way,  $C^\hdot(A, A_t)$ acquires the structure of a bigraded
associative algebra such that  the direct summand $C^p(A,A^{\o k})$ is assigned bidegree
$(p,2k-2)$.
\smallskip

Next, on   $C^\hdot(A, A_t)$, we introduce a pair of new binary operations,
$\vdash$ and $\dashv\,$: 
\begin{gather*}
C^p(A, A^{\o k})\times C^q(A, A^{\o m})\too C^{p+q-1}(A, A^{\o (k+m-1)}),\\
(f, g)\mto
f \vdash g := f^{[1,p]} \ccirc g^{[p,p+q-1]}, \quad\text{and}
\quad (f, g) \mto f \dashv g := g^{[k,k+q-1]} \ccirc f^{[1,p]},
\end{gather*}
where $f^{[i,j]}$ denotes applying $f$ to the consecutive components
$i, i+1, i+2, \ldots, j$, that is, 
$$f^{[i,j]}(a_1 \o \ldots \o a_\ell) =
a_1 \o \ldots \o a_{i-1} \o f(a_i \o \ldots \o a_j) \o a_{j+1} \o \ldots
\o a_\ell.$$
\vskip 2pt

\begin{prop} \label{dgap}
\vi
The operation $\,\dis f_{}\vee g := {f\vdash g}\ -\ {f\dashv g}$ and Hochschild differential $\b$
give the space $C(A, A_t)_{\geq 2} := \bigoplus_{p, k \geq 2} C^p(A, \overset{\,}{A^{\o k^{^{}}}})$ the structure of an
 associative DG algebra, i.e.,
\begin{gather}
f\vee (g\vee h) = (f\vee g)\vee h,\label{assp} \\
\quad \b(f\vee g) = (\b f)\vee g + (-1)^{p-1} f\vee(\b g) \qquad \forall f \in C^p(A, A^{\o k}). \label{dgp} 
\end{gather}
\vskip 2pt

\vii The cup product $\cup$ is associative and induces the zero
map on  cohomology:
$$
H^{p}(A, A^{\o k}) \o H^{q}(A, A^{\o m}) \stackrel{_{\cup=0}}\tooo
H^{p+q}(A, A^{\o k+m-1})\quad\forall p, q\geq 1
\en\oper{whenever}\en k \geq 2 \en\oper{or}\en m \geq 2.
$$\vskip 2pt

\viii The following compatibility identities hold:
\begin{equation} \label{cbcomp}
(f \cup g)\vee h = f \cup (g\vee  h), \qquad f\vee  (g \cup h) = (f\vee g) \cup h.
\end{equation}
\end{prop}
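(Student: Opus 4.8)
The plan is to verify everything by direct computation on the chain level, exploiting the fact that all of the operations $\cup$, $\vdash$, $\dashv$ are built from the single primitive operation $f^{[i,j]}$ of ``applying $f$ in a block of consecutive tensor slots.'' First I would record the basic bookkeeping lemma: for composable block-insertions, $\bigl(f^{[i,j]}\bigr)\ccirc\bigl(g^{[k,\ell]}\bigr)$ is again a block-insertion whenever the slots $k,\dots,\ell$ lie inside $i,\dots,j$ (namely it inserts $f$ applied to a string that contains $g(a_k\o\dots\o a_\ell)$ in one of its entries), and that two block-insertions in disjoint slot-ranges commute on the nose. This is the combinatorial heart of parts (i) and (iii). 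With this lemma, associativity \eqref{assp} of $\vee$ becomes a finite check: expanding $f\vee(g\vee h)$ and $(f\vee g)\vee h$ each gives four terms of the form (block-insert $f$)$\ccirc$(block-insert $g$)$\ccirc$(block-insert $h$) with signs, and one matches them up using the lemma. The cup product $\cup$ is just juxtaposition of tensor factors with $f,g$ applied to disjoint blocks, so its associativity is immediate and the mixed identities \eqref{cbcomp} follow by the same disjoint-support commutation — $(f\cup g)\vee h$ and $f\cup(g\vee h)$ are literally the same composite of block-insertions, since the $\cup$-factor $f$ sits in slots disjoint from where $h$ interacts with $g$.

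For the DG/Leibniz identity \eqref{dgp}, I would use the standard formula for the Hochschild differential $\b$ on a cochain valued in an $A$-bimodule, written in terms of the outer bimodule structure on $A^{\o k}$: the extreme terms $a_1 f(\dots)$ and $f(\dots)a_{p+1}$ are multiplication on the outer legs, and the inner terms merge adjacent arguments. The point is that $\b f$ is itself expressible in the same ``block-insert'' language (now with $p+1$ Hochschild arguments), so $\b(f\vdash g)$, $(\b f)\vdash g$, $f\vdash(\b g)$ are all sums of block-insertion composites, and the Leibniz identity is the usual telescoping/cancellation of the Gerstenhaber-type computation: the inner merging terms of $\b$ acting on the composite either fall entirely inside the $f$-block or entirely inside the $g$-block (giving $(\b f)\vdash g$ and $\pm f\vdash(\b g)$) or straddle the interface between them, and the straddling terms from $f\vdash g$ cancel against the corresponding straddling terms from $f\dashv g$ after forming $f\vee g=f\vdash g-f\dashv g$. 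The sign $(-1)^{p-1}$ is exactly the Koszul sign for moving $\b$ past $f$ with its shifted degree; I would fix conventions so that $C(A,A_t)_{\geq2}$ is graded by $p-1$ (equivalently, $\b$ has degree $+1$ and $\vee$ has degree $0$ in that shifted grading), which makes \eqref{dgp} the genuine DG-algebra axiom.

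Part (ii) I would derive formally from (i) and (iii) rather than recompute: by \eqref{cbcomp}, $f\cup g = (f\cup 1)\vee g$ type manipulations are not quite available since there is no unit in $C(A,A_t)_{\geq2}$, so instead I would argue directly. Write $h = f\cup g\in C^{p+q}(A,A^{\o k+m-1})$ with, say, $k\geq2$. The claim is that a cup product of a $p$-cocycle and a $q$-cocycle is a coboundary in this tensor-valued setting. The mechanism is the familiar one: because the coefficient bimodule $A^{\o(k+m-1)}$ has $k-1\geq1$ ``passive'' inner tensor slots on which $A$ acts trivially, one can construct an explicit homotopy — concretely, reinterpret one of the passive slots as carrying the value of a lower cochain and use that $1\in A$ provides a contracting homotopy for the bar-type differential in that slot — so that $f\cup g = \b(\text{something})$ whenever $f$ is closed (one only needs $f$ or $g$ closed, and the symmetry between $k\geq2$ and $m\geq2$ handles both). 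Alternatively, and perhaps more cleanly, one observes $f\cup g$ equals $\b$ applied to a cochain built by inserting $1_A$ into the appropriate slot, because $\b(1)$-type terms telescope to give back $f\cup g$; this is the tensor-algebra analogue of the well-known fact that the cup product on $H^\bullet(A,A)$ valued in the \emph{bar construction} is null-homotopic.

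I expect the main obstacle to be sign-bookkeeping in \eqref{dgp}: getting the Koszul signs consistent between the block-insertion formalism, the Hochschild differential's alternating signs, and the definition of $\vee$ as a \emph{difference} $f\vdash g - f\dashv g$ requires care, and a wrong convention at any stage breaks the cancellation of straddling terms. The combinatorics of which $\b$-terms survive is routine once the signs are pinned down; I would therefore spend the effort up front fixing a single consistent convention (e.g.\ all cochains normalized, $\b$ written à la Loday, degrees shifted by one) and then let the telescoping run.
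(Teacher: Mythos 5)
Your approach to parts (i) and (iii) and to the Leibniz identity \eqref{dgp} is essentially the paper's: associativity of $\vee$ comes from the four mutual-associativity identities among $\vdash$ and $\dashv$ (the key point being that $y$ having $k', p' \geq 2$ guarantees the $x$- and $z$-insertions land in disjoint blocks), and the compatibility identities \eqref{cbcomp} come from the analogous disjoint-support commutations with $\cup$. Your intuition that for the DG property the ``straddling'' boundary terms produced by $\vdash$ and by $\dashv$ cancel upon taking the difference $\vee = {\vdash} - {\dashv}$ is precisely what the paper encodes in the two identities
\[
(\b f \vdash g) + (-1)^{p-1}(f \vdash \b g) = \b(f \vdash g) + (-1)^{p+1} f \cup g,
\qquad
(\b f \dashv g) + (-1)^{p-1}(f \dashv \b g) = \b(f \dashv g) + (-1)^{p-1} f \cup g,
\]
whose difference gives \eqref{dgp} exactly.

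Where you diverge is part (ii). You first note that a purely formal deduction from (i) and (iii) fails for lack of a unit, and then you sketch a separate contracting homotopy built by inserting $1_A$ into a ``passive'' tensor slot. This is unnecessary and risks being incorrect as stated: the two displayed identities above \emph{already} hand you (ii). If $\b f = 0$ and $\b g = 0$, the first identity gives $f \cup g = \pm\, \b(f \vdash g)$ (this only needs $m \geq 2$) and the second gives $f \cup g = \pm\, \b(f \dashv g)$ (needing $k \geq 2$), so the half-product $f\vdash g$ or $f\dashv g$ is itself the explicit nullhomotopy for the cup product. Your proposed $1_A$-insertion homotopy is not worked out, and it is not clear it produces a well-defined cochain map, whereas the half-product homotopy is already present and costs nothing. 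So the gap is not in correctness of the overall structure but in missing that (ii) falls out of the same Gerstenhaber-style computation you already planned for \eqref{dgp}; you would be doing extra, fragile work for a fact you have already proved.
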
\vskip 2pt

\begin{proof}
We note the following identities (for any $x,y,z$):
\begin{gather*}
x \vdash (y \vdash z) = (x \vdash y) \vdash z, \quad x \dashv (y \dashv z) =
(x \dashv y) \dashv z, \\
x \vdash (y \dashv z) = (x \vdash y) \dashv z, \quad x \dashv (y \vdash z) = (x \dashv y) \vdash z.
\end{gather*}
The first set of identities is fairly obvious from the definition, and
the second follows because, since $y \in C^{p'}(A, A^{\o {k'}})$ for
$p', k' \geq 2$, $x$ (on the left) and $z$ (on the right) are always
applied to disjoint sets of consecutive components.  This is all we
need to prove the associativity \eqref{assp}. In fact, $\dashv$ and $\vdash$ are
mutually associative \eqref{mass}.

To prove the DG property \eqref{dgp}, we show the following two identities:
\begin{gather} \label{dvdp}
(\b f \vdash g) + (-1)^{p-1}(f \vdash \b g) = \b(f \vdash g) + (-1)^{p+1} f \cup g, \\ \label{dvdp2}
(\b f \dashv g) + (-1)^{p-1}(f \dashv \b g) = \b(f \dashv g) + (-1)^{p-1} f \cup g.
\end{gather}
Actually, in \eqref{dvdp}, we only need $m \geq 2$, and in
\eqref{dvdp2}, we only need $k \geq 2$.  These identities also prove
that the cup product is zero on cohomology. This implies part (ii),
since it is clear that the cup product is associative.

We show only \eqref{dvdp}, as the other identity is the same
verification.
Write $ m: A \o A\to A$ for the multiplication map. We compute
\begin{multline*}
((\b f \vdash g) + (-1)^{p-1}(f \vdash \b g))(a_1 \o \ldots \o a_{p+q})
= 
a_1 (f \vdash g)(a_2 \o \ldots \o a_{p+q})\\ +(-1)^{p+q+1} (f \vdash g)(a_1 \o \ldots \o a_{p+q-1}) a_{p+q} +
(-1)^{m+1} f \cup g + \sum_{i=1}^{p+q-1} (-1)^i (f \vdash g) \ccirc  m^{i,i+1}.
\end{multline*}

Finally, part (iii) of the proposition follows from Proposition \ref{kkoz}(ii) below.
\end{proof}

For each $n\geq 1$, we may introduce an operad, $As^{(n)}$, generated by $n$
binary operations, $\star_i,\ i=1,\ldots,n$, subject to the following
relations
of pairwise mutual associativity (considered also by J.-L. Loday):
\begin{equation}\label{mass}
a \star_i (b \star_j c) = (a \star_i b) \star_j c,\qquad \forall
i,j=1,\ldots,n.
\end{equation}

\begin{prop}\label{kkoz} \vi Each of the operads  $As^{(2)}$ and  $As^{(3)}$
is  Koszul and self-dual, (see \cite{GK} for a definition).

\vii
The operations $(\vee,\, \cup)$ and
$(\vdash,\, \dashv,\, \cup)$ make $C(A, A_t^+)_{\geq 2}$ an $As^{(2)}$- and an {$As^{(3)}$-algebra, respectively.}
\end{prop}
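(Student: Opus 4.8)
The plan is to treat part~(i) by quadratic duality together with a Poincar\'e--Birkhoff--Witt basis, and part~(ii) by collecting mutual-associativity identities, most of which are already established in the proof of Proposition~\ref{dgap}.

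For~(i): $As^{(n)}$ is the quadratic non-symmetric operad generated by the $n$-dimensional space $E=\k\langle\mu_1,\ldots,\mu_n\rangle$ of binary operations, and in the arity-three component $\mathcal F(E)(3)=(E\circ_1 E)\oplus(E\circ_2 E)$ the relation $x\star_i(y\star_j z)=(x\star_i y)\star_j z$ reads $\mu_i\circ_2\mu_j-\mu_j\circ_1\mu_i=0$; these $n^2$ elements span the relation space $R$. Equip $\mathcal F(E^*)(3)$ with the sign-twisted pairing under which $As$ is self-dual, namely $\langle\xi\circ_1\eta,\,x\circ_1 y\rangle=\langle\xi,x\rangle\langle\eta,y\rangle$, $\langle\xi\circ_2\eta,\,x\circ_2 y\rangle=-\langle\xi,x\rangle\langle\eta,y\rangle$, and $\langle\xi\circ_1\eta,\,x\circ_2 y\rangle=0$. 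A one-line computation then gives $R^\perp=\mathrm{span}\{\mu_a^*\circ_1\mu_b^*-\mu_b^*\circ_2\mu_a^*\}$, and relabelling the dual basis identifies $R^\perp$ with the span of the same mutual-associativity relations; hence $As^{(n)!}\cong As^{(n)}$. For Koszulity I would produce a PBW basis: using the relations, every tree monomial rewrites to a unique ``right comb'' $a_1\star_{i_1}(a_2\star_{i_2}(\cdots\star_{i_{k-1}}a_k))$, so $\dim As^{(n)}(k)\le n^{k-1}$, while the graded vector space $\bigoplus_{k\ge1}(\k^n)^{\otimes(k-1)}$ with operations $u\star_i v:=u\otimes e_i\otimes v$ is an $As^{(n)}$-algebra freely generated by $1\in(\k^n)^{\otimes0}=\k$, which forces $\dim As^{(n)}(k)=n^{k-1}$ and shows the right combs form a basis. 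The quadratic relations are therefore a complete operadic Gr\"obner basis, so $As^{(n)}$ is Koszul; taking $n=2,3$ gives~(i). (As a consistency check, the generating series $x/(1-nx)$ satisfies the Koszul-duality functional equation.)

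For~(ii), with $(\star_1,\star_2,\star_3)=(\vdash,\dashv,\cup)$ one checks the nine relations $a\star_i(b\star_j c)=(a\star_i b)\star_j c$. The four involving only $\vdash$ and $\dashv$ are precisely the identities displayed in the proof of Proposition~\ref{dgap}, valid because in each composite the two cochains are applied to disjoint blocks of consecutive arguments; associativity of $\cup$ is immediate since $\cup$ is induced by the associative product of $A_t$. The four remaining ``mixed'' identities $a\vdash(b\cup c)=(a\vdash b)\cup c$, $a\cup(b\vdash c)=(a\cup b)\vdash c$, $a\dashv(b\cup c)=(a\dashv b)\cup c$, $a\cup(b\dashv c)=(a\cup b)\dashv c$ are direct expansions: $\cup$ only merges the two outer tensor components that meet, whereas $\vdash$ and $\dashv$ only alter an interior block, so the operations act on disjoint parts of the expression; one unwinds the definitions of $f^{[i,j]}$ and of the product $A^{\otimes k}\times A^{\otimes m}\to A^{\otimes(k+m-1)}$, tracking Koszul signs. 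This gives the $As^{(3)}$-algebra structure, and setting $\vee:=\vdash-\dashv$ and subtracting the mixed identities in pairs yields the $As^{(2)}$-algebra structure for $(\vee,\cup)$ and, in particular, the compatibility~\eqref{cbcomp} of Proposition~\ref{dgap}(iii).

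I expect the Koszulity assertion in~(i) to be the only genuine obstacle: self-duality is linear algebra and the identities in~(ii) are bookkeeping, but Koszulity requires either invoking the PBW/Gr\"obner-basis criterion for operads --- hence fixing a monomial order and verifying that the normal forms number $n^{k-1}$ --- or a hands-on acyclicity argument for the Koszul complex of $As^{(n)}$; the route via the explicit free $As^{(n)}$-algebra above seems the cleanest.
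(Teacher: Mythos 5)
Your part (ii) is exactly the paper's route: collect the nine mutual-associativity identities, observing that the four identities among $\vdash$ and $\dashv$ were already established in the proof of Proposition \ref{dgap}, that associativity of $\cup$ is immediate, and that only the four mixed identities $x\vdash(y\cup z)=(x\vdash y)\cup z$, $x\dashv(y\cup z)=(x\dashv y)\cup z$, $(x\cup y)\vdash z=x\cup(y\vdash z)$, $(x\cup y)\dashv z=x\cup(y\dashv z)$ remain to be checked. Your self-duality computation for (i) likewise just fills in the linear algebra that the paper leaves to the reader.

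For Koszulity, however, you take a genuinely different (though equally standard) route than the one the paper develops. The paper's sketch mentions the Gr\"obner/PBW criteria of Dotsenko--Khoroshkin and Hoffbeck only as a citation, and then elaborates a different argument: split the operadic bar complex of the free $As^{(n)}$-algebra into direct summands indexed by the sequence of operation labels appearing, and observe that each summand is (up to sign) the Hochschild complex of a free associative algebra, hence acyclic in degrees $\ge 2$. Your proof instead actually carries out the PBW route: you fix right combs as normal forms, derive the upper bound $\dim As^{(n)}(k)\le n^{k-1}$, then exhibit $\bigoplus_{k\ge1}(\k^n)^{\otimes(k-1)}$ with $u\star_i v:=u\otimes e_i\otimes v$ as an $As^{(n)}$-algebra generated by $1$, giving the matching lower bound and hence a PBW basis. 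Both approaches are valid; yours is more self-contained and gives the operadic Poincar\'e series $x/(1-nx)$ explicitly, while the paper's reduces cleanly to the known acyclicity of the reduced Hochschild complex of a tensor algebra. One small caveat: you assert the ``unique right comb'' normal form, i.e.\ confluence of the rewriting system, without spelling out the critical-pair check; this is straightforward (it is the same overlap as in the associative case, just carrying the labels $i_1,\dots,i_{k-1}$ along), and in any case your dimension count only needs the combs to \emph{span}, not to be a confluent normal form, so the argument still closes. Similarly, the claim that the explicit model is ``freely generated'' is not needed a priori --- it suffices that it is generated by $1$, giving the lower bound --- and freeness then follows a posteriori from the dimension equality.
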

\begin{proof}[Sketch of Proof]  It is easy to see, as in the associative case, that the quadratic dual
of $As^{(2)}$ is itself and similarly for $As^{(3)}$.
The fact that these two operads are Koszul is a straightforward
consequence of Koszulity criteria due to Dotsenko and Khoroshkin
\cite{DK} and Hoffbeck \cite{Ho}.
This can also be seen directly using the same
argument as in the associative case \cite{GK}, \cite{MSS}.
Specifically, we show that the operadic homology of the
free $As^{(2)}$ or $As^{(3)}$-algebra vanishes in degrees $\geq 2$. To
this end, we split up the operadic homology complexes for  $As^{(2)}$ and
$As^{(3)}$
into direct sums of
pieces corresponding to particular sequences of operations, e.g., $(*, \star)$ would consist
of terms that multiply out to a sum of terms of the form $a * b \star c$.  Each such has the
vanishing homology property by the same proof as in the usual case
of  Hochschild homology of a free associative algebra; see \cite[\S 3.1]{L}.

Next, it is straightforward to verify the following identities:
\begin{gather*}
x \vdash (y \cup z) = (x \vdash y) \cup z, \quad x \dashv (y \cup z) = (x \dashv y) \cup z, \\
(x \cup y) \vdash z = x \cup (y \vdash z), \quad (x \cup y) \dashv z = x \cup (y \dashv z).
\end{gather*}
This yields part (ii), and also implies the identities in \eqref{cbcomp}.
\end{proof}

\subsection{Infinite order deformations}\label{defor2}
In
 the classical theory, an infinite order formal deformation of an algebra $A$
with  multiplication
map $m: A\times A\to A$ is a formally associative star-product
\begin{align}\label{class_star}
a\star a'=m(a,a')+ t\be^{(1)}(a,a')+t^2\be^{(2)}(a,a')+\ldots
\in A\lll t\rrr,\quad\be^{(k)}\in C^2(A, A),\en k\geq 1.
\end{align}

Given such a star-product, we extend the formal series $m+
t\be^{(1)}+t^2\be^{(2)}+\ldots\in\sum t^kC^2(A, A)$ $=C^2(A, A\lll
t\rrr)$, by $\k\lll t\rrr$-bilinearity, to obtain a continuous cochain
$\dis\be := \be^{(1)} + t \be^{(2)} + \ldots \in C^2_{\k\lll
  t\rrr}(A\lll t\rrr, A\lll t\rrr)$.

With an appropriate equivalence
relation on the set of associative star-products,
one has the following well-known result.

\begin{prop}\label{class_def}  Equivalence classes of  associative
star products \eqref{class_star}
are in one-to-one  correspondence
with gauge equivalence classes  in the set of solutions of
the following Maurer-Cartan equation
$$
\{\be\in C^2_{\k\lll t\rrr}(A\lll t\rrr, A\lll t\rrr)\;\mid\;
 \b_{A\lll t\rrr}(\be)+(1/2)t\{\be,\be\}_{A\lll t\rrr}=0\}.\eqno\Box
$$
\end{prop}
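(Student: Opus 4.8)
The plan is to reproduce the classical Gerstenhaber argument in the present $\k\lll t\rrr$-linear formulation, the only new feature being the bookkeeping of the formal parameter. First I would package the associativity constraint: writing $\mu := m + t\be^{(1)} + t^2\be^{(2)} + \cdots = m + t\be$, regarded as a continuous, $\k\lll t\rrr$-bilinear element of $C^2_{\k\lll t\rrr}(A\lll t\rrr, A\lll t\rrr)$, the star-product \eqref{class_star} is associative if and only if $\mu$ defines an associative product on $A\lll t\rrr$, i.e. $\mu\circ\mu = 0$, equivalently $\{\mu,\mu\} = 0$ for the Gerstenhaber bracket $\{\cdot,\cdot\}$ (one has $\{\mu,\mu\} = 2\,\mu\circ\mu$). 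Expanding $\{\mu,\mu\} = \{m,m\} + 2t\{m,\be\} + t^2\{\be,\be\}$ and using $\{m,m\} = 0$ together with $\{m,-\} = \b_{A\lll t\rrr}$, one divides by $2t$ to obtain exactly $\b_{A\lll t\rrr}(\be) + \tfrac12 t\{\be,\be\}_{A\lll t\rrr} = 0$; reading off the coefficient of $t^n$ recovers the familiar obstruction sequence $\b\be^{(n)} = -\tfrac12\sum_{i+j=n,\, i,j\geq 1}\{\be^{(i)},\be^{(j)}\}$. This gives a bijection between associative products of the form \eqref{class_star} and solutions $\be \in C^2_{\k\lll t\rrr}(A\lll t\rrr, A\lll t\rrr)$ of the Maurer--Cartan equation.

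Next I would match the two equivalence relations. An equivalence of star-products is a continuous $\k\lll t\rrr$-linear automorphism $\Phi = \id + t\phi^{(1)} + t^2\phi^{(2)} + \cdots$ of $A\lll t\rrr$, reducing to $\id$ modulo $t$, that intertwines the products; such $\Phi$ form a pro-unipotent group, which I will write $1 + t\,C^1_{\k\lll t\rrr}(A\lll t\rrr, A\lll t\rrr)$. Transporting a product by $\Phi$ is, to first order, bracketing with $\log\Phi \in t\,C^1_{\k\lll t\rrr}(A\lll t\rrr, A\lll t\rrr)$; hence under the substitution $\mu = m + t\be$ the induced action on $\be$ is precisely the gauge action of $1 + t\,C^1_{\k\lll t\rrr}(A\lll t\rrr, A\lll t\rrr)$ on the Maurer--Cartan elements of the differential graded Lie algebra $\big(C^{\hdot+1}_{\k\lll t\rrr}(A\lll t\rrr, A\lll t\rrr),\ \b,\ \{\cdot,\cdot\}\big)$, given by the usual formula $\be \mapsto e^{\{\xi,-\}}\be - \tfrac{e^{\{\xi,-\}}-1}{\{\xi,-\}}(\b\xi)$, with $\xi$ ranging over $t\,C^1_{\k\lll t\rrr}(A\lll t\rrr, A\lll t\rrr)$. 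Thus equivalent star-products correspond to gauge-equivalent Maurer--Cartan solutions and conversely, and combining this with the first step yields the proposition.

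Both computations are entirely classical (see e.g. \cite{L} and the references therein), so the bulk of the write-up would be a citation plus a brief indication of the dictionary above. The only genuinely delicate points --- and the step I expect to cost the most care --- are: (i) fixing signs and normalizations so that $\{m,-\}$ is literally $\b$ and the stated coefficient $\tfrac12 t$, rather than some other nonzero scalar, comes out; and (ii) checking that the formal series involved (the associativity expansion, $\log\Phi$, $\exp$, and the gauge-transformation series) converge in the $t$-adic topology and that the pro-unipotent group and its action on Maurer--Cartan elements are well defined term by term. Both are routine in the $t$-adically complete setting $A\lll t\rrr$, but must be recorded; everything else reduces to the verbatim Gerstenhaber calculation.
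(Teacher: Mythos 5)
Your argument is correct and is exactly the standard Gerstenhaber computation. Note, though, that the paper does not actually supply a proof of this proposition: it is prefaced by ``one has the following well-known result'' and closed with a $\Box$ immediately after the statement, i.e.\ it is being recalled, not proved. So there is no ``paper's proof'' to compare against; your write-up is a reasonable rendering of the cited classical fact. The one convention you should make explicit, since the stated coefficient $(1/2)t$ depends on it, is that for two $2$-cochains the Gerstenhaber bracket is symmetric ($\{f,g\}=f\circ g+g\circ f$), so that $\{m+t\be,\,m+t\be\}=2t\{m,\be\}+t^2\{\be,\be\}$, and that $\b=\{m,-\}$ in the sign convention being used; with those fixed, dividing by $2t$ gives exactly the displayed Maurer--Cartan equation, and reading off $t$-coefficients recovers the obstruction recursion. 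The gauge-equivalence half is likewise standard in the $t$-adically complete setting, and your remarks about convergence of $\exp$, $\log$, and the gauge series are the right things to record.
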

\medskip

To consider free product deformations of an algebra $A$, let
$\wh{A}_t:=\prod_{k\geq 0}\, A^{2k}_t\cong \prod_{m \geq 1} A^{\o m}$
be the  completion of the  free product algebra $A_t$
in the $t$-adic topology, and write $\wh{A}^+_t:=\prod_{k\geq 1}\,
A^{2k}_t$
for the corresponding augmentation ideal.

An infinite order free product deformation of $A$ is, by definition, a formally
associative star-product of the form
\begin{equation}\label{star}
a\starb a'= aa'+ \be^{(1)}(a,a')+\be^{(2)}(a,a')+\ldots,\qquad
\be^{(k)}\in C^2(A, A^{2k}_t).
\end{equation}

In more detail, given an arbitrary sequence $\be^{(k)} \in C^2(A, A^{2k}_t),\
k= 1,2,\ldots$, of two-cochains,
we first extend each map $\be^{(k)}$
to a $\k[t]$-bilinear  map $\be^{(k)}: {A}_t\times {A}_t\to
{A}_t$
given by the formula
$$
\be^{(k)}:\
(a_1 t a_2 t \ldots t a_m) \times (b_1 t \ldots t b_n) \lto
 a_1 t \ldots t a_{m-1} t \be(a_m \o b_1) t b_2 t \ldots t b_n.
$$

For any $u,u'\in A_t$,
the corresponding
 formal series $uu'+ \be^{(1)}_t(u,u')+\be^{(2)}_t(u,u')+\ldots$
clearly converges in $\wh{A}_t$.
In this way, we obtain a well defined and continuous 
 $\k[t]$-bilinear map $A_t\times A_t\to \wh{A}_t$,
that  can be uniquely extended,
by continuity, to a map
$\be:\wh{A}_t\times\wh{A}_t\to\wh{A}_t$.
We are interested in  those star-products
\eqref{star} which give rise to an
{\em associative} product $\beta$,  on $\wt A_t$.

We define a natural equivalence relation on such  star-products
as follows. First, for any map $\phi: A\to A_t$ we define
a map $\phi_t: A_t\to A_t$ by a 
Leibniz-type formula:
\begin{equation}\label{f_t}
\phi_t:\
a_1\,t\,a_2\,t\ldots t\,a_n\lto
\sum\nolimits_{k=1}^n a_1\,t\ldots a_{k-1}\,t\,\phi(a_k)\,t\,
a_{k+1}\,t\ldots t\,a_n,
\end{equation}
 for any $a_1,\ldots,a_n\in A$.
Thus, given a sequence of one-cochains
$f^{(k)}\in C^1(A, A_t^{2k}),\,k=1,2,\ldots$,  we
have a continuous map
$f= \Id+f^{(1)}_t+f^{(2)}_t+\ldots: \wh{A}_t\to\wh{A}_t$. Furthermore,
given any star-product $\starb$, we define a new star product by the
formula $\dis a\star_{_\gamma}a':=f\inv(f(a)\starb f(a')).  $ We say
that the star-products $\star_{_\gamma}$ and $\starb$ are {\em
  equivalent}.

Given a star-product \eqref{star},
we form
$\be:=\be^{(1)}+\be^{(2)}+\ldots\in C^2(A, \wh{A}^+_t)$,
an associated  two-cochain.
We further define gauge equivalence of chains 
to be infinitesimally generated by the following $C^1(A,
\wh{A}_t^+)$-action:
$\phi:\ \beta\mapsto\phi \cdot \beta$, where
$$
\phi \cdot \beta(a_1 \o a_2) = \phi(a_1) a_2 + a_1 \phi(a_2) - \phi(a_1
a_2) 
+ \beta'(\phi(a_1) \o a_2) + \beta'(a_1 \o \phi(a_2)) - \phi_t  \ccirc \beta'(a_1 \o a_2).
$$
Here, $
\phi_t$ is defined according to formula \eqref{f_t}, and we put
$$
\beta'(a_1 t \ldots t a_m \o c_1 t \ldots t c_n) = a_1 t \ldots t a_{m-1} t
(\beta(a_m \o c_1)) t c_2 t \ldots t c_n.
$$
 
The following result provides a cohomological description of free product deformations,
similar to the one given in
Proposition \ref{class_def}
(below,
$\b$ stands for the Hochschild
differential).

\begin{thm}
\vi Linear maps $\be^{(i)}$ in \eqref{star} define an associative product
 on $\wh{A}_t$
   if and only if   the Maurer-Cartan equation,
\begin{equation} \label{mce}
\b(\beta) + \frac{1}{2} \ \beta\vee\beta = 0,\qquad
\oper{holds\en for}\quad\beta := \beta^{(1)}+\beta^{(2)}+\ldots\in
C^2(A, \wh{A}_t^+).
\end{equation}

\vii Star products are equivalent  if and only if   the solutions
of the  equation \eqref{mce} are gauge-equivalent.
\end{thm}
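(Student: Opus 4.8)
\textit{Strategy.} The plan is to run the classical Gerstenhaber argument with the Hochschild DGLA replaced by the DG algebra $(C(A,A_t)_{\geq 2},\b,\vee)$ of Proposition \ref{dgap} and the multiplication $m$ of $A$ replaced by its extension to the free product $A_t$. Put $\mu_0:=m+\beta\in C^2(A,\wh A_t)$, where $m\colon A\otimes A\to A$ is the original product and $\beta=\beta^{(1)}+\beta^{(2)}+\cdots$. By construction the extended product $\starb$ on $\wh A_t$ is obtained from $\mu_0$ by \emph{insertion at the junction}: on two words $a_1t\cdots ta_m$ and $b_1t\cdots tb_n$ it replaces the adjacent pair $a_m,b_1$ by $\mu_0(a_m\otimes b_1)$, and is extended $\k[t]$-bilinearly and continuously. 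The first step is to reduce associativity of $\starb$ on $\wh A_t$ to a single-letter identity: evaluating the associator of $\starb$ on words $x,y,z$, the two insertions occur near the $(x,y)$- and $(y,z)$-junctions, and as soon as the middle word $y$ has length $\geq 2$ these are separated by a buffer letter, so the associator vanishes identically; localizing near the junctions, $\starb$ is therefore associative iff the trilinear map $a\otimes b\otimes c\mapsto \bar\mu_0(\bar\mu_0(a,b),c)-\bar\mu_0(a,\bar\mu_0(b,c))$ on $A^{\otimes3}$ vanishes, where $\bar\mu_0$ denotes the junction-insertion of $\mu_0$.

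The next step is to identify this trilinear associator with the operations $\vdash,\dashv$. Unwinding the definitions, inserting $\mu_0$ into the right-nested slot $\bar\mu_0(a,\bar\mu_0(b,c))$ is exactly $(\mu_0\vdash\mu_0)(a,b,c)$ and into the left-nested slot $\bar\mu_0(\bar\mu_0(a,b),c)$ is exactly $(\mu_0\dashv\mu_0)(a,b,c)$, so the associator equals $-(\mu_0\vee\mu_0)$. Expanding, $\mu_0\vee\mu_0=m\vee m+(m\vee\beta+\beta\vee m)+\beta\vee\beta$; here $m\vee m=0$ is associativity of $A_t$, and spelling out $m\vdash\beta,\ m\dashv\beta,\ \beta\vdash m,\ \beta\dashv m$ shows that $m\vee\beta+\beta\vee m$ reproduces the four terms of the Hochschild coboundary $\b\beta$ — the analogue of the classical identity $\b f=[m,f]$, consistent with \eqref{dgp}. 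The series $\beta\vee\beta=\sum_{k,l}\beta^{(k)}\vee\beta^{(l)}$ converges in the completed cochain complex because $\beta^{(k)}\vee\beta^{(l)}$ sits in tensor-degree $2(k+l)$. Assembling these, $\starb$ is associative iff $\b(\beta)+\tfrac12\beta\vee\beta=0$, i.e.\ iff the Maurer--Cartan equation \eqref{mce} holds (the factor $\tfrac12$ entering exactly as in the classical Proposition \ref{class_def}); this is part (vi).

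For part (vii), an equivalence of star products is implemented by a unipotent continuous $\k[t]$-linear automorphism $f=\Id+\phi_t$ of $\wh A_t$, determined via \eqref{f_t} by a single cochain $\phi=\sum_k f^{(k)}\in C^1(A,\wh A_t^+)$, with $a\star_\gamma a':=f^{-1}\bigl(f(a)\starb f(a')\bigr)$. The plan is to write this conjugation out on generators: it yields an explicit formula for the cochain $\gamma$ attached to $\star_\gamma$ in terms of $\beta$ and $\phi$, whose part linear in $\phi$ is precisely the displayed infinitesimal action $\phi\cdot\beta$. Since the whole setup is complete for the tensor-degree filtration, which is exhaustive and bounded below, the standard formal deformation-theoretic yoga applies: the conjugation action of these automorphisms on the set of solutions of \eqref{mce} is generated by the infinitesimal action $\phi\cdot(-)$, so its orbits are exactly the gauge-equivalence classes. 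Hence $\star_\gamma\sim\starb$ iff $\gamma$ and $\beta$ are gauge-equivalent, in both directions, the comparison being entirely parallel to Proposition \ref{class_def}.

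I expect the main obstacle to be the sign- and index-bookkeeping in part (vi): verifying the reduction of associativity on $\wh A_t$ to the single-letter case, matching the two nested junction-insertions with $\vdash$ and $\dashv$ (including the signs built into those operations), and checking $m\vee\beta+\beta\vee m=\b\beta$. Once this is in place, the associativity and mutual-associativity identities for $\vdash,\dashv$ and the Leibniz rule for $\b$ recorded in Propositions \ref{dgap} and \ref{kkoz} make the assembly in (vi) and the comparison in (vii) essentially formal; the remaining work in (vii) is the routine but fiddly unwinding of $f^{-1}(f(-)\starb f(-))$ into the displayed formula for $\phi\cdot\beta$.
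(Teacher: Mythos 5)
Your overall route is the same as the paper's: reduce associativity on $\wh A_t$ to the $A^{\o 3}$ case via the buffer-letter observation, then recognize the trilinear associator as $-(\mu_0\vee\mu_0)$ with $\mu_0=m+\beta$. This is a sound and somewhat more conceptual organization of what the paper writes out directly as a degree-$p$ identity; the buffer-letter argument usefully fleshes out the paper's terse ``it suffices to check associativity on $A^{\o3}$.''

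However, the last step of part (vi) does not follow from your own derivation. You correctly find $\mu_0\vee\mu_0=(m\vee m)+(m\vee\beta+\beta\vee m)+\beta\vee\beta$ with $m\vee m=0$, and the cross term computes to
$(m\vee\beta+\beta\vee m)(a,b,c)=a\cdot\beta(b,c)-\beta(ab,c)+\beta(a,bc)-\beta(a,b)\cdot c=\b\beta(a,b,c)$,
i.e.\ exactly \emph{one} copy of $\b\beta$. Hence associativity is equivalent to $\b\beta+\beta\vee\beta=0$, not $\b\beta+\tfrac12\beta\vee\beta=0$. The appeal to ``the factor $\tfrac12$ entering exactly as in the classical Proposition \ref{class_def}'' is incorrect: there the $\tfrac12$ compensates for the Gerstenhaber \emph{bracket}, whose cross term $\{m,\beta\}+\{\beta,m\}=2\{m,\beta\}$ yields two copies of $\b\beta$; here $\vee$ is an associative product, not a bracket, and the cross term yields one copy. (The degree-$p$ identity the paper itself displays in this proof is $\b\beta+\beta\vee\beta=0$, which suggests the $\tfrac12$ in \eqref{mce} is a misprint; either way your assembly should report the equation your computation actually produced, rather than insert a factor it did not.) A secondary caveat: $m\in C^2(A,A^{\o1})$ lies outside the bidegrees $p,k\geq2$ on which Proposition \ref{dgap} establishes associativity and the Leibniz rule for $\vee$, so you should not invoke \eqref{assp} or \eqref{dgp} with $m$ in place; fortunately you only need the two direct computations $m\vee m=0$ and $m\vee\beta+\beta\vee m=\b\beta$, which can be checked by hand as above. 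Part (vii) is handled at the same sketch level as in the paper; the skeleton (unipotent $f=\Id+\phi_t$, linearize to the displayed infinitesimal action) is the right one, but the identification of $f^{-1}(f(-)\starb f(-))$ with the gauge action is left to bookkeeping.
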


\begin{proof} In this proof (only) we temporarily change our notation
and given $\beta^{(m)}\in C^2(A, A^{\o m+1})$, write
$\beta_m^{12}\in C^3(A, A^{\o m+2})$ for the map
 $a \o b \o c \mto \beta^{(m)}(a,b)\o c$, etc.

It is easy to see that it suffices to check associativity on $A^{\o 3}$,
namely that $(a \star b) \star c = a \star(b \star c)$ for all $a, b, c
\in A$.  This is equivalent to the following (similar to
\eqref{firstass}), for all $p \geq 2$:
\begin{multline}
\be^{(p)}(a \o bc) + a\be^{(p)}(b \o c) + \sum_{m+n=p} \be_m^{12} \ccirc \be_n^{23} (a \o b \o c) \\ = \be^{(p)}(ab \o c) + \be^{(p)}(a \o b) c + \sum_{m + n = p} \be_n^{m,m+1} \ccirc \be_m^{12} (a \o b \o c).
\end{multline}
which is the Maurer-Cartan equation \eqref{mce}.  Part (ii) is then not difficult to verify.
\end{proof}

Below, we summarize a few basic  properties of free product deformations which
are entirely analogous
to the well-known properties of ordinary one-parameter formal deformations:

\begin{itemize}
\item[(1)] First order free product deformations are classified by
  $H^2(A, A \o A)$.

\item[(2)] The obstruction to extending a first-order
  deformation to second order lives in
  $H^3(A, A^{\o 3})$.

\item[(3)] Let $\star_n$ be an associative product on $A / (A_t^+)^{n+1}$ of the form
\begin{equation}
a \star_n b = ab + \sum_{m=1}^n  \be^{(m)}(a \o b),\qquad \be^{(m)}\in
C^2(A, A^{m+1})=C^2(A, A^{2m}_t).
\end{equation}
There is an obstruction  in $H^3(A, A^{\o  n+1})$
to the existence of  $\be^{(n+1)} \in C^2(A, A^{\o (n+2)})$,
such that the formula $a \star_{n+1} b:=a \star_n b +  \be^{(n+1)}(a \o b)$ gives an associative
product on $A / \ldp t \rdp^{n+2}$.

Explicitly, the condition on 
$\be^{(n+1)}$ reads
\begin{equation} \label{mcex}
\b\be^{(n+1)} = \sum_{i+j-1 = n+1}\ \be^{(i)}\vee \be^{(j)}.
\end{equation}

\item[(4)] If the obstruction in (3) vanishes then,
 the space of possible $\be^{(n+1)}$ (up to equivalence of the
resulting star product, $\star + \be^{(n+1)}$ modulo
$\ldp t \rdp^{n+2}$), is
$H^2(A, A^{\o(n+2)})$.
\end{itemize}

\begin{proof}
In  degrees $n=1,2$, the Maurer-Cartan equation  
\eqref{mcex} says that
$$
\b \be^{(1)} = 0, \qquad\text{and}\qquad
\b \be^{(2)} = \be^{(1)}\vee\be^{(1)}.
$$
Using
\eqref{dgp}, we find $\b (\be^{(1)}\vee \be^{(1)}) = (\b \be^{(1)})\vee \be^{(1)}
+ \be^{(1)}\vee (\b \be^{(1)})
 = 0 + 0 = 0$.
This yields (1)--(2).

In general, if $\be^{(1)}, \ldots, \be^{(n)}$ satisfy the Maurer-Cartan
conditions up to $O(t^{n+1})$ (i.e., $\b \be^{(m)} = \sum_{i+j-1 = m} \
\be^{(i)}\vee\be^{(j)}$
 for $m \leq n$), then we consider $\b$ of the RHS of \eqref{mcex}:
\begin{multline*}
\sum_{i+j-1=n+1} \b(\be^{(i)}\vee \be^{(j)})= \sum_{i+j-1=n+1} 
\big[(\b \be^{(i)})\vee
\be^{(j)} - \be^{(i)}\vee (\b \be^{(j)})\big] \\
= \sum_{i+j+k-2=n+1} \big[(\be^{(i)}\vee \be^{(j)})\vee \be^{(k)} -
\be^{(i)}\vee (\be^{(j)}\vee \be^{(k)})\big] = 0,
\end{multline*}
where  we used both \eqref{dgp} and \eqref{assp}.  Thus, the RHS is
indeed a Hochschild three-cocycle.  Thus, if this represents the zero
class of $H^3(A, A^{\o  n+1})$ (i.e., it is a Hochschild
three-coboundary), then the space of choices of $\be^{(n+1)}$ is the space
of Hochschild two-cocycles.  
Furthermore, we have the freedom of conjugating by
automorphisms $\phi: A \rightarrow A$ of the form $\phi = \Id + \phi'$
as follows:
$$
\phi^{-1}(\phi(a) \star \phi(b) ) \equiv a \star b + \phi'(a) b + a \phi'(b) - \phi'(ab) \pmod {\ldp t \rdp^{n+2}}.
$$

We conclude that  the space of
$\be^{(n+1)}$'s, taken up to equivalence of the obtained
star product on $A / \ldp t \rdp^{n+2}$, is $H^2(A, A^{\o n+2})$.
\end{proof}
\subsection{Deformations of NCCI algebras} 
It will be convenient below to work in a slightly more general setting
of  deformations that are not necessarily written in the form of a star
product.

To define such deformations, fix an augmented (not necessarily
commutative) associative algebra $R$, and let $R^+\sset R$ be the
augmentation ideal.  Given an algebra $\wt A$ and an algebra embedding
$R\into \wt A$, write $\ldp R^+\rdp\sset \wt A$ for the two-sided
ideal in $\wt A$ generated by $R^+$.  We will view $R$ and $\wt A$ as
filtered algebras equipped with the $R^+$-adic and $\ldp R^+\rdp$-adic
descending filtrations, respectively, and let $\gr R$ and
$\gr \wt A$ denote the associated graded algebras. Thus, there is a
canonical algebra map $\gr R\to\gr \wt A$.

Given an algebra $A$ and an algebra isomorphism
$\phi:\ \wt A/\ldp R^+\rdp\iso A$,
we say that $\wt A$ is a deformation of $A$ over $R$. 
We may view $A$ as a graded algebra concentrated in degree zero.

\begin{defn} \label{ncdd} The deformation  $\wt A$ of $A$ over $R$
is said to be a {\sf flat} {\em free product formal deformation}
if the algebra  $\wt A$ is complete in the $\ldp R^+\rdp$-adic topology,
and the   maps $\phi\inv: A\to \wt A/\ldp R^+\rdp$
and $\gr R\to\gr \wt A$ induce a graded
algebra isomorphism
\begin{equation}\label{flatdef}
A*\gr R\iso\gr \wt A.
\end{equation}
\end{defn}

Now, fix  $V$, a $\Z_+$-graded
finite dimensional vector space, and let
 $F := TV$. Let  $L \subset T V$ be
a  finite-dimensional vector subspace.
 Assuming certain favorable conditions, we can describe the equivalence
classes of 
{\em all}  infinite order 
 free product deformations of  an algebra of the form $A=F / \ldp L \rdp$
quite explicitly. 

To explain this, 
write $\wh{F}_{t}$ for
 the standard completion of the algebra
$F_t=(TV)*\k[t]$ and $\wh{F}_t^+\sset \wh{F}_t$ for
 the augmentation ideal. Given  any linear
map $\phi: L \to \wh{F}_t^+$, we introduce a $\k\lll t\rrr$-algebra
\begin{equation}\label{defa}
A_\phi := \wh{F}_t / \ldp x - \phi(x)\rdp_{x \in L}.
\end{equation}
It is clear that the projection $\wh{F}_t\onto\wh{F}_t/\wh{F}_t^+=F$
induces an algebra isomorphism    $A_\phi / \ldp t \rdp\iso A$. 

Thus, we may view the algebra $A_\phi$ as a
one-parameter infinite order 
 free product deformation of $A$. This deformation is {\bf not} necessarily
 flat,
in general, i.e., the corresponding map \eqref{flatdef} for
$\wh{A}=A_\phi$  may fail to be an 
isomorphism.

To formulate a sufficient condition for flatness, we recall the notion
of a {\em noncommutative complete intersection} (NCCI); see \cite{EG}.
An algebra of the form $A= TV/ \ldp L \rdp$ is said to be an NCCI if
the two-sided ideal $J:=\ldp L \rdp$ has the property that $J/J^2$ is
{\em projective} as an $A\o A^{op}$-module.  Moreover, such a linear
subspace $L \subset TV$ is called \emph{minimal} if $L \cap J^2 = 0$,
or equivalently, $L$ has minimal dimension (assuming $J$ is
finitely-generated).

An NCCI algebra $A$ is known to have Hochschild dimension
$\leq 2$,  so that
$H^3(A, A \o A) = 0$ \cite{EG}. Thus, free product deformations of
$A$ 
are unobstructed.  
Moreover,  we have the following.

\begin{prop}\label{propdef}  Let  $A= TV/ \ldp L \rdp$ be an NCCI, with
$L \subset TV$ minimal, and let
 $\phi: L \to \wh{F}_t^+$ be a linear map. Then, we have the following.

\vi The deformation  $A_\phi$ defined in \eqref{defa} is flat.

\vii Any flat one-parameter infinite order 
 free product deformation of $A$ is equivalent to a deformation of
the form $A_\phi$ for an appropriate map $\phi$.

\viii Two deformations $A_\phi$ and $A_\psi$ associated,
respectively, to linear
  maps $\phi,\,\psi\in\Hom_\k(L, \wh{F}_t^+)$, are equivalent  if and only if   there
exists a linear map $f: V\to A_t^+$, such that
\begin{equation}\label{equivdef}
\pi \ccirc (\phi - \psi) = \Theta_f |_{L}.
\end{equation}
\end{prop}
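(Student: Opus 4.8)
All three assertions are free-product analogues of classical facts about deformations of NCCI algebras; the plan is to adapt the NCCI flatness argument of \cite{EG} for (i) and the Gerstenhaber-type bookkeeping of \S\ref{defor1}--\S\ref{defor2} for (ii)--(iii).

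\emph{(i) Flatness of $A_\phi$.} Set $B:=A*\k[t]$ and $J:=\ldp L\rdp\subset TV$. Since $\phi(x)\in\wh F_t^+$, each relation $x-\phi(x)$ ($x\in L$) has $t$-degree-zero leading term $x$, so the canonical homomorphism of \eqref{flatdef} is automatically a surjection $B\onto\gr A_\phi$; the content is its injectivity. I would proceed by the standard order-by-order argument: show by induction on $n$ that the truncation $A_\phi/\ldp t\rdp^{n+1}$ is a flat deformation of $A$ over $\k[t]/t^{n+1}$. For the inductive step, the obstruction to flatness of the $n$-th truncation, given flatness of the $(n-1)$-st, is a Hochschild three-cocycle whose class lies in $H^3(A,A^{\o n+1})$, and this group vanishes because $A$ is NCCI (Hochschild dimension $\le 2$, \cite{EG}); this is the same mechanism that appears in the bulleted properties (1)--(4) above. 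The subtle point — and the main obstacle — is to see that the \emph{specific} relations $x-\phi(x)$ produce a flat deformation rather than a degenerate one: this is where minimality of $L$ ($L\cap J^2=0$) is used, guaranteeing that the deformed relations remain ``independent'', equivalently that the length-two NCCI bimodule resolution
\[
0\to A\o L\o A\to A\o V\o A\to A\o A\to A\to 0
\]
of $A$ (furnished by \cite{EG} from the projectivity of $J/J^2$ over $A\o A^{op}$) deforms to a complex of completed free $A_\phi$-bimodules, with generators $V\oplus\k t$ and relations $\{x-\phi(x)\}$, which is again a resolution of $A_\phi$; comparing this with the resolution of $B$ then forces $\gr A_\phi=B$. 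Throughout one must keep careful track of the $\ldp t\rdp$-adic completions, since $A^{\o m}$ is generally infinite-dimensional.

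\emph{(ii) Every flat free product deformation is of the form $A_\phi$.} Let $\wt A$ be a flat free product deformation of $A=F/\ldp L\rdp$ over $\k[t]$, so $\wt A$ is $\ldp t\rdp$-adically complete with $\gr\wt A\cong A*\k[t]$. Lifting a basis of $V$ to elements of $\wt A$ and sending $t\mapsto t$ defines a continuous $\k\lll t\rrr$-algebra map $\wh F_t\to\wt A$, surjective because it is so modulo $\ldp t\rdp$ and $\wt A$ is complete. For $x\in L$ the image of $x$ lies in the ideal $\ldp t\rdp\subset\wt A$, so choose $\phi(x)\in\wh F_t^+$ lifting it; then $\ldp x-\phi(x)\rdp_{x\in L}$ is contained in the kernel, giving a surjection $A_\phi\onto\wt A$ of $\k\lll t\rrr$-algebras that reduces to $\id_A$ modulo $t$. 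By part (i) $A_\phi$ is flat; using flatness of $\wt A$ (so that $\Tor_1^{\k[t]}(\wt A,\k)=0$) and completeness of $A_\phi$, the kernel $K$ of $A_\phi\onto\wt A$ satisfies $K=tK$ and hence $K=0$. Thus $A_\phi\iso\wt A$, and a routine verification shows this is an equivalence of deformations in the sense of Definition \ref{ncdd}. No serious obstacle is expected beyond that last compatibility check.

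\emph{(iii) Classification up to equivalence.} After the reductions of (ii), an equivalence $A_\phi\iso A_\psi$ is a continuous $\k\lll t\rrr$-algebra automorphism $\Phi$ of $\wh F_t$ fixing $t$, reducing to $\id_A$ modulo $t$, and carrying $\ldp x-\phi(x)\rdp_{x\in L}$ onto $\ldp x-\psi(x)\rdp_{x\in L}$. Such a $\Phi$ is determined, modulo inner automorphisms, by its restriction to the free generators $V$, i.e. by a map $f\colon V\to\wh F_t^+$ with $\Phi|_V=\id+f+(\text{higher }t\text{-order})$, whose Leibniz extension is the derivation $\Theta_f$ in the manner of \eqref{f_t}. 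For the \emph{if} direction, take $\Phi$ to be the exponential of the derivation attached to such an $f$ and check order by order that $\Phi$ intertwines the two relation ideals precisely when \eqref{equivdef} holds. For the \emph{only if} direction, expand $\Phi(x-\phi(x))$ for $x\in L$ and impose membership in $\ldp x-\psi(x)\rdp$; modulo that ideal this reads $\psi(x)-\phi(x)+\Theta_f(x)\equiv 0$, and here minimality $L\cap J^2=0$ is exactly what lets one replace ``modulo $\ldp x-\psi(x)\rdp$'' by ``modulo $t$ in $A_t$'', converting the identity into $\pi\ccirc(\phi-\psi)=\Theta_f|_L$ with $\pi\colon\wh F_t^+\to A_t^+$ the projection of \eqref{equivdef}. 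The main obstacle here is showing that \emph{every} equivalence arises from such an $f$, controlling the $\ldp t\rdp$-adic convergence, and invoking minimality at exactly the point where the passage to $A_t^+$ is needed.
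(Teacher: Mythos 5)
The student takes a genuinely different route from the paper, and the comparison is worth spelling out.

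For part (i), the paper gives a one-step argument: it cites \cite[Proposition 4.2.1]{S3} for the characterization that NCCI is equivalent to the surjection $\gr_{\ldp L\rdp}F \onto A*TL$ being an isomorphism, and from this concludes directly that $A_t\onto\gr_{\ldp t\rdp}A_\phi$ is an isomorphism. You instead propose an order-by-order obstruction argument driven by $H^3(A,A^{\o n+1})=0$, supplemented by a ``deformed Anick resolution.'' There is a genuine gap here: vanishing of the obstruction group certifies that \emph{some} flat lift of the $(n-1)$-st truncation exists, not that the specific truncation $A_\phi/\ldp t\rdp^{n+1}$ is flat. You acknowledge this is the subtle point and appeal to minimality plus the deformed Anick complex being a resolution of $A_\phi$, but exactness of that complex is essentially equivalent to the flatness you are trying to prove, so as written the argument is circular unless you supply the missing exactness proof. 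The paper's route via \cite{S3} avoids this entirely.

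For part (ii), the divergence is more interesting. The paper uses Anick's resolution to compute $H^2(A,A\o A)$ as a quotient of $\Hom(L,A\o A)$, matches the first-order class of $A_\phi$ with $\phi$ composed with the natural projections, and then invokes the inductive star-product description of \S\ref{defor2} together with $H^3(A,A\o A)=0$ to show the $A_\phi$ exhaust all deformations. You instead lift generators, get a surjection $A_\phi\onto\wt A$, and kill the kernel by a Nakayama argument. This is more elementary and is a valid alternative, but your $\Tor_1^{\k[t]}$ step conflates the left ideal $tA_\phi$ with the two-sided ideal $\ldp t\rdp$, which differ here since $t$ is not central; the cleaner way to finish is to note that the induced map on associated gradeds is an algebra endomorphism of $A*\k[t]$ fixing $A$ and $t$, hence the identity by the universal property of the free product, so the original map is an isomorphism by completeness. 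Also, an isomorphism $A_\phi\iso\wt A$ must still be shown to be an equivalence in the star-product sense of \S\ref{defor2}, i.e.\ to come from a map of the form $\Id+f^{(1)}_t+f^{(2)}_t+\cdots$; this is not automatic and you only gesture at it.

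For part (iii), both your sketch and the paper's defer the main work, but to different places: the paper reads off the first-order ambiguity from Anick's resolution and then hands off to the inductive construction of \S\ref{defor2}, whereas you work directly with automorphisms of $\wh F_t$ intertwining the relation ideals. Your claim that minimality is ``exactly what lets one replace modulo $\ldp x-\psi(x)\rdp$ by modulo $t$ in $A_t$'' is the crux and is stated rather than proved; it needs the same Anick-resolution input that the paper uses, so the two approaches effectively converge here. Overall: (ii) is a genuinely different and arguably more transparent argument modulo the two fixable issues noted; (i) has a real gap; (iii) is incomplete in the same way the paper's is, but via a different setup.
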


In the last formula, we used the notation
$\pi: \wh{F}_t \onto A_t$
for the canonical quotient map and, given a linear map
 $f: V\to A_t^+$, write
$$\Theta_f(v_1 v_2 \cdots
v_n) := \sum_{i=1}^n \pi(v_1 v_2 \cdots v_{i-1}) f(v_i)
\pi(v_{i+1} \cdots v_n),\quad \forall v_1, \ldots, v_n \in V. 
$$
\begin{rem}
Our proof below shows that, in the case where the image of the map $\phi$ is contained in
the subalgebra $F_t\sset\wh{F}_t$, we may replace the algebra
$A_\phi$, in \eqref{defa}, by
$F_t / \ldp x - \phi(x)\rdp_{x \in L}$,  its non-completed
counterpart. In this way, we obtain a {\em genuine}, rather than merely
a
`formal',
 flat free product deformation 
of $A$.
\end{rem}

\begin{proof}[Proof of Proposition \ref{propdef}] \vi By the inductive
  argument of \cite[Proposition 4.2.1]{S3}, one may show that the NCCI
  property is equivalent to the statement that the canonical
  surjection $\text{gr}_{\ldp L \rdp} F \onto A * TL$ is an
  isomorphism.  Thus, we conclude that the surjection
  $A_t \onto \text{gr}_{\ldp t \rdp} A_\phi$ is an isomorphism.
   It follows that $A_\phi$ is a free product deformation of $A$ over $\k\lll t \rrr$.

\vii For an NCCI algebra, there is a standard Anick's free
resolution of $A$ as an $A$-bimodule \cite{An2}:
\begin{equation}\label{anres}
0 \rightarrow A \o L \o A \rightarrow A \o V \o A \rightarrow A \o A \onto A \rightarrow 0,
\end{equation}
where the first map is the restriction to $L$ of the map
$$a \o (v_1 v_2 \cdots v_n) \o b \lto \sum_{i=1}^n a v_1 \cdot\ldots\cdot
v_{i-1} \o v_i \o v_{i+1} \cdots v_n b,\quad v_i \in V,$$ and the second map has the form
$a \o v \o b \mapsto av \o b - a \o vb$.

One may use  Anick's
resolution to compute Hochschild cohomology. We see in particular that
the group  $H^2(A, A \o A)$ is a quotient of $\Hom(L,  A \o A)$.
Now let  $A_\phi$ be the deformation associated
with $\phi\in\Hom(L, \wh{F}_t^+)$.
Then, it is easy to check that the element of $H^2(A, A \o A)$
corresponding to the induced first-order deformation $A_\phi/ \ldp t \rdp^2$
is represented by the composite  
$$L\to\wh{F}_t^+\onto \wh{F}_t^+/(\wh{F}_t^+)^2=F\o F\onto A \o A,$$
of the map $\phi$ 
followed by two
natural projections (cf.~\cite[Lemma 10.2.1]{CBEG}).

Furthermore, by the inductive description of all possible star-products in \S
\ref{defor2}, the deformations $A_\phi$ must exhaust all possible
deformations (note that all possible classes of $H^2(A, A^{\o m})$ at
every step of the way are attained, which is as it must be, since
$H^3(A, A \o A) = 0$).

\viii At the first-order stage, we see from Anick's resolution
\eqref{anres} that two elements of the space $\Hom_k(L, A \o A)$ yield
the same cohomology class in $H^2(A, A \o A)$  if and only if   they
differ by $\Theta_f$ from condition \eqref{equivdef} modulo
$\ldp t \rdp^2$, for some $f$.  
The desired result now follows from  the inductive construction of all free-product
deformations given in  \S \ref{defor2}.
\end{proof}

\begin{rem}\label{final} \vi In general, given an arbitrary
algebra $A$ such that $H^3(A,A\o A)=0$,
one can show that there exist `versal'  free product deformations of $A$.
The base  of such a versal deformation is a 
completed tensor algebra of the vector space $H^*$, where
$H:=H^2(A,A\o A)$.

\vii Proposition \ref{propdef} may be generalized easily to the case
where the ground field $\k$ is replaced by a ground ring $R$, a finite dimensional 
semisimple $\k$-algebra, as in \cite{EG}.
Such a generalized version of  Proposition \ref{propdef}
applies to preprojective algebras
of non-Dynkin quivers, in particular. Thus the proposition 
may be viewed as
a generalization of \cite{CBEG}, Theorem 10.1.3.

\viii Let $\pi_1(X)$ be the fundamental group of a compact oriented
Riemann surface $X$ of genus $\geq 1$.  The group algebra
$\k[\pi_1(X)]$ may be thought of as a multiplicative analogue of the
preprojective algebra of a non-Dynkin quiver (being
non-Dynkin corresponds to the condition that the Euler characteristic
of $X$ be nonpositive).  Accordingly, there is a similar construction
of free product deformations of the group algebra as follows.

Let $g$ be the genus of $X$, and write
  $a_1, \ldots, a_g, b_1, \ldots, b_g$ for the standard loops around the
  handles, which generate $\pi_1(X)$.
The group $\pi_1(X)$ is the quotient
of $\Gamma$, the free group generated
  by the letters $a_i, b_i$, by the normal subgroup generated 
by the following element:
$$\ggamma:=
(a_1 b_1 a_1^{-1} b_1^{-1})\cdot\ldots\cdot
 (a_g b_g a_g^{-1} b_g^{-1})\
  \in \ \Gamma.$$

To construct free product deformations of the group
algebra $\k[\pi_1(X)]$, we put $F := \k[\Gamma]$.  The algebra $F$
is a `multiplicative analogue' of a free algebra.
To any element $u \in 1 + F_t^+$,
we associate an algebra
$\dis A_u := F_t / \ldp \ggamma - u \rdp$.

There is a `multiplicative analogue' of Proposition \ref{propdef},
saying that the algebra
$A_u$ gives a flat free-product deformation of the group
algebra $\k[\pi_1(X)]$, and moreover that these are all such
deformations up to equivalence. One may prove this result
by using the fact that
the prounipotent completion of $\k[\pi_1(X)]$ is isomorphic to
 a completion of an
algebra of the form
$\dis\k\langle x_1,\ldots,x_g,y_1,\ldots,y_g\rangle/
\ldp[x_1,y_1]+\ldots+[x_g,y_g]\rdp$.

This example may be generalized to the situation of orbifold surfaces of
nonpositive Euler characteristic (the latter also yields NCCI algebras).
\end{rem}

\appendix


\newcommand{\chr}{\operatorname{char}}
\newcommand{\supp}{\operatorname{Supp}}
\renewcommand{\Ker}{\operatorname{Ker}}
\renewcommand{\im}{\operatorname{Im}}
\newcommand{\codim}{\operatorname{codim}}
\newcommand{\Diff}{\operatorname{Diff}}
\newcommand{\Eu}{\operatorname{Eu}}
\renewcommand{\eu}{\operatorname{eu}}
\renewcommand{\D}{\operatorname{D}}
\renewcommand{\R}{\mathbf{R}}
\renewcommand{\Hom}{\operatorname{Hom}}
\renewcommand{\End}{\operatorname{End}}
\newcommand{\Isom}{\operatorname{Isom}}
\newcommand{\isomo}{\overset{\sim}{=}}
\newcommand{\isomoto}{\overset{\sim}{\to}}
\renewcommand{\id}{{\mathtt{Id}}}
\renewcommand{\Tr}{\operatorname{Tr}}
\newcommand{\shHom}{\underline{\operatorname{Hom}}}
\newcommand{\shEnd}{\underline{\operatorname{End}}}
\newcommand{\shExt}{\underline{\operatorname{Ext}}}
\newcommand{\shAut}{{\underline{\operatorname{Aut}}}}
\newcommand{\shIsom}{{\underline{\operatorname{Isom}}}}
\newcommand{\shEq}{{\underline{\operatorname{Eq}}}}
\newcommand{\Eq}{\operatorname{Eq}}

\renewcommand{\Ext}{\operatorname{Ext}}
\newcommand{\Pic}{\operatorname{Pic}}
\renewcommand{\ch}{\operatorname{ch}}
\renewcommand{\rk}{\operatorname{rk}}
\newcommand{\Symm}{\operatorname{Sym}}
\newcommand{\cl}{\operatorname{cl}}
\renewcommand{\pr}{{\mathtt{pr}}}
\renewcommand{\ev}{\operatorname{ev}}

\newcommand{\EXT}{{\mathcal E\!\scriptstyle{XT}}}
\renewcommand{\DR}{\mathtt{DR}}
\newcommand{\de}{\displaystyle\frac{\partial\ }{\partial\epsilon}}
\newcommand{\vac}{{\mathbf{1}}}
\newcommand\dual[1]{{#1}^{\vee}}
\renewcommand{\ip}{{\langle\ ,\ \rangle}}
\newcommand{\Atiyah}{\alpha}
\newcommand{\pont}{{\scriptstyle{\Pi}}}
\newcommand{\Gg}{{\mathfrak g}}
\newcommand{\coComm}{\operatorname{coComm}}
\renewcommand{\Ad}{\operatorname{Ad}}
\newcommand{\one}{{1\!\!1}}
\newcommand{\DQ}{\operatorname{DQ}}
\newcommand{\shDQ}{\underline{\operatorname{DQ}}}
\renewcommand{\Triv}{\operatorname{Triv}}
\newcommand{\MC}{\operatorname{MC}}
\newcommand{\shMC}{\underline{\operatorname{MC}}}
\newcommand{\Def}{\operatorname{Def}}
\newcommand{\shDef}{\underline{\operatorname{Def}}}
\newcommand{\SP}{\operatorname{SP}}
\newcommand{\Hol}{\operatorname{Hol}}
\renewcommand{\Aut}{\operatorname{Aut}}
\newcommand{\Tw}{\operatorname{Tw}}
\newcommand{\TW}{{\mathtt{Tw}}}
\newcommand{\curv}{{\mathfrak\scriptstyle C}}
\renewcommand{\Mat}{{\mathtt{Mat}}}
\newcommand{\Tot}{{\mathtt{Tot}}}
\renewcommand{\Der}{{\mathtt{Der}}}
\newcommand{\mmu}{{\mu\!\!\!\mu}}
\newcommand{\SIGMA}{{\scriptstyle\Sigma}}
\newcommand{\Alg}{\operatorname{ALG}}
\newcommand\real[1]{{\vert{#1}\vert}}
\newcommand{\cat}{\mathbf{Cat}}
\newcommand{\Stack}{\operatorname{Stack}}

\newcommand{\Top}{\mathtt{Top}}
\newcommand{\Cov}{\mathtt{Cov}}
\newcommand{\dbar}{\overline{\partial}}
\newcommand{\AuxData}{\mathtt{AD}}
\newcommand{\liminv}{\mathop{\varprojlim}\limits}
\renewcommand\s[1]{{(#1)}}
\newcommand{\stack}{\mathtt{st}}
\renewcommand{\A}{\operatorname{\mathcal A}}
\renewcommand{\LL}{\operatorname{\mathcal L}}
\newcommand{\cma}{\mathtt{cma}}
\newcommand{\conv}{\mathtt{conv}}
\renewcommand{\triv}{\mathtt{triv}}
\newcommand{\mat}{\mathtt{mat}}
\newcommand{\dd}{\mathtt{dd}}
\renewcommand{\op}{\mathtt{op}}
\newcommand{\dR}{\widehat{\mathrm{d}}_\mathrm{d\!\! R}}
\renewcommand{\bc}{\mathtt{B}}
\newcommand{\jet}{{{}^{(\infty)}\!\!\mathcal{J}}}
\newcommand{\conn}{{{}^{(\infty)}\!\nabla}}


\makeatletter
\renewcommand{\subsubsection}{\@startsection
{subsubsection}%
{2}%
{0mm}%
{-\baselineskip}%
{-0.5\baselineskip}%
{\normalfont\normalsize\bfseries }}%
\makeatother

\section{On the morphism from periodic cyclic homology to equivariant cohomology, by Boris Tsygan} \label{s:Morphism}
\subsection{Motivation} \label{s:Motivation} Since Hochschild chains are a noncommutative analogue of forms, it is natural to ask what algebraic structure that is carried by forms can be defined for chains of an associative algebra. The structure we will discuss involves not only forms but also vector fields (their noncommutative analogue is the Lie algebra of derivations) and functions (their analogue is the algebra itself). It is well known that, for a vector field $X$, two operators on forms are defined: the contraction $\iota _X$ and the Lie derivative $L_X$. We will write all the formulas for a graded commutative ring, since this will give us an intuition about the signs. For a graded derivation $X$ of such an algebra ${\mathcal A}$, let $\iota_X$ be the derivation of $\Omega^*_{{\mathcal A}/k}$ of degree ${\rm{deg}}(X)-1$ sending $a$ to zero and $da$ to $X(a)$ for $a\in {\mathcal A}.$ Let $L_X$ be the derivation of degree ${\rm{deg}}(X)$ sending $a$ to $X(a)$ and $da$ to $(-1)^{{\rm{deg}}(X)}dX(a).$
Put $|X|={\rm{deg}}(X)+1.$
Together with the de Rham differential $d$, the above operations are subject to Cartan relations
\begin{equation}\label{eq:Cartan 1}
[L_X, L_Y]=L_{[X,Y]};\; [L_X, \iota_Y]=\iota_{[X,Y]};\;
\end{equation}
$$[\iota_X, \iota_Y]=0;\;[d, L_X]=0;\;[d, \iota_X]=(-1)^{|X|-1}L_X$$
All commutators are taken in the graded sense. As a corollary, we get the classical formula for the de Rham differential: for an $n$-form $\omega$,
one has
\begin{equation}\label{eq:Chevalley 1}
\iota_{X_1}\ldots \iota_{X_{n+1}}d\omega=
\end{equation}
$$(\sum_{j=1}^{n+1}(-1)^{K_{j}}L_{X_j}\iota_{X_1}\ldots {\widehat{\iota_{X_j}}}\ldots \iota_{X_{n+1}}+
$$
$$\sum_{i<j}(-1)^{M_{ij}}\iota_{[X_i,X_j]}\ldots {\widehat{\iota_{X_j}}}\ldots{\widehat{\iota_{X_j}}}\ldots )\omega $$
where
\begin{equation}\label{eq:sign1}
K_j={\sum_{a}|X_a|+ |X_j|(\sum_{b<j}|X_b|+1)}
\end{equation}
and
\begin{equation}\label{eq:sign2}
M_{ij}=\sum_{a}|X_a|+ |X_j|\sum_{b<j, b\neq i}|X_b|+|X_i|(\sum_{c<i}|X_c|+1)+1.
\end{equation}
This follows from \eqref{eq:Cartan 1}.

 Now, for an element $f$ of ${\mathcal A}$, put $|f|={\rm{deg}}(f),$  $\iota _f\omega=f\omega,$ and $L_f\omega=(-1)^{|f|-1}df\wedge\omega$. In addition to the above equations, the following are true:
\begin{equation}\label{eq:Cartan 3}
[L_X, L_f]=L_{X(f)};\; [L_X, \iota_f]=\iota_{X(f)};\;[\iota_f, \iota_g]=0;\;[d,\iota_f ]=(-1)^{|f|-1}L_f.
\end{equation}

We can combine the two sets of equations \eqref{eq:Cartan 1} and \eqref{eq:Cartan 3}, into one set that looks same as \eqref{eq:Cartan 1}, but
$X$ and $Y$ are now elements of the cross product of the graded Lie algebra ${\rm{Der}}({\mathcal A})$ by the Abelian graded Lie algebra ${\mathcal A}[1].$

 The  algebraic structure described above, and a much stronger additional structure, generalizes to the noncommutative setting. These generalizations become progressively more and more difficult and inexplicit. However, the very first level of noncommutative calculus, namely formula \eqref{eq:Chevalley 1} (and its slight generalization to the case when $X_i$ are vector fields or functions) turns out to be as easy as one can expect.
\subsection{The map $\chi$}\label{ss:the map chi}
Let ${\mathcal A}$ be a graded associative algebra. Let $\A[1]\oplus \Der (\A)$ be the differential graded Lie algebra defined as follows: $\A[1]$ is Abelian, $\Der (\A)$ is a subalgebra whose adjoint action on $\A[1]$ is the natural one, the differential $\delta$ sends $a\in \A[1]$ to $(-1)^{|a|}\ad (a)$ and is zero on $\Der(\A)$. Let $\LL$ be a DG Lie subalgebra of $\A[1]\oplus \Der (\A)$. Let $K$ be a graded space on which $\LL$ acts so that elements of $\A[1]$ act by zero. Let $\tau: \A\to K$ be an $\LL$-equivariant trace. We extend it by zero to the entire Hochschild complex $C_{-\bullet}(\A).$

Define for $a\in \A[1]$
\begin{equation}\label{eq: iota hoch 1}
\iota_a(a_0\otimes a_1 \otimes \ldots \otimes a_n)=(-1)^{|a||a_0|}a_0a\otimes a_1 \otimes \ldots \otimes a_n
\end{equation}
and for $D\in \Der(\A)$
\begin{equation}\label{eq: iota hoch 2}
\iota_D(a_0\otimes a_1 \otimes \ldots \otimes a_n)=(-1)^{|D||a_0|}a_0D(a_1)\otimes a_2 \otimes \ldots \otimes a_n
\end{equation}
Define, for $X_1, \ldots, X_n\in \LL$, and for a Hochschild chain $c$,
\begin{equation}\label{eq:chi}
\chi (X_1, \ldots, X_n)(c)=\sum _{\sigma\in S_n}\pm \tau (\iota_{X_{\sigma(1)}}\ldots \iota_{X_{\sigma(n)}}c);
\end{equation}
the sign is computed as follows: a permutation of $X_i$ and $X_j$ introduces a sign $(-1)^{|X_i||X_j|}.$ (Note that $|X|$ is the degree of the operator $\iota _X.$) It turns out that $\chi$ defines a cocycle of the complex $$C^\bullet (\LL, \Hom (C_{-\bullet}(A), K)[[u]])$$ with the differential $b+uB+\delta+u\partial _{\operatorname{Lie}}$; the action of $L$ on $\Hom (C_{-\bullet}, K)$ is induced by the action on $K$. In other words,
\begin{prop}\label{prop:chevalley}
$$\chi (X_1, \ldots, X_n)((b+uB)(c))=\frac{1}{n!}(\sum (-1)^{L_i} \chi (X_1, \ldots, \delta X_i, \ldots, X_n)+$$
$$u\sum (-1)^{M_{ij}} \chi ([X_i,X_j], \ldots, \widehat{ X_i}, \ldots,\widehat{ X_j},\ldots, X_n)+$$
$$u\sum (-1)^{K_j} X_i \chi (X_1 \ldots, \widehat{ X_i}, \ldots, X_n))(c)$$
\end{prop}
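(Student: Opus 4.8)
The plan is to prove Proposition~\ref{prop:chevalley} by reducing the identity to the classical Chevalley--Eilenberg formula \eqref{eq:Chevalley 1}, which in turn follows from the Cartan relations \eqref{eq:Cartan 1} and \eqref{eq:Cartan 3}. First I would record the relevant Cartan-type identities directly at the level of the Hochschild complex: namely, that the operators $\iota_a$ (for $a\in\A[1]$) and $\iota_D$ (for $D\in\Der(\A)$) defined by \eqref{eq: iota hoch 1}--\eqref{eq: iota hoch 2}, together with the differentials $b$ and $B$, satisfy the \emph{same} relations as the operators $\iota_X$, $L_X$, $d$ on differential forms, with the dictionary $b\leftrightarrow$ (nothing, since chains are "functions times forms"), $uB\leftrightarrow d$, and $L_X := [b+uB,\iota_X]$. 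Concretely one checks by a direct computation on the defining formulas that $[\iota_X,\iota_Y]=0$ (graded), that $L_X := [b+uB,\,\iota_X]$ reproduces the expected action on chains (in particular agrees with the Lie action of $\LL$ on $C_{-\bullet}(\A)$ up to the $u$-factor bookkeeping), and that $[L_X,\iota_Y]=\iota_{[X,Y]}$. These are exactly the three identities of \eqref{eq:Cartan 1}/\eqref{eq:Cartan 3} that enter the derivation of \eqref{eq:Chevalley 1}, so this is really a finite check on generators.

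Next I would apply the trace $\tau$. Because $\tau$ is $\LL$-equivariant and kills $\A[1]$, we have $\tau\circ L_a = 0$ for $a\in\A[1]$ and $\tau\circ L_D = X_D\cdot\tau$ (the induced action on $\Hom(C_{-\bullet},K)$) for $D\in\Der(\A)$; uniformly, $\tau\circ L_X = X\cdot\tau$ using that the $\A[1]$-part acts by zero on $K$. Now I would simply run the standard algebraic manipulation that produces the Chevalley formula: starting from $\chi(X_1,\dots,X_n)((b+uB)c) = \sum_\sigma\pm\,\tau(\iota_{X_{\sigma(1)}}\cdots\iota_{X_{\sigma(n)}}(b+uB)c)$, commute $b+uB$ leftward past the chain of $\iota$'s, each time picking up a term $[b+uB,\iota_{X_{\sigma(j)}}] = L_{X_{\sigma(j)}}$. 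Then commute that $L_{X_{\sigma(j)}}$ back to the left past the remaining $\iota$'s, using $[L_X,\iota_Y]=\iota_{[X,Y]}$, until it hits $\tau$ and becomes either $0$ (if $X_{\sigma(j)}\in\A[1]$, but in fact the $u$-free part also contributes, see below) or the action of $X_{\sigma(j)}$ on $\tau$. Summing over $\sigma\in S_n$ and collecting: the terms where $b+uB$ was moved all the way out but never produced an $L$ give (after recognizing $b$ vs.\ $uB$) the $\delta$-term — here one uses that $[b,\iota_a] = (-1)^{|a|}\iota_{\ad(a)} = \iota_{\delta a}$ and $[b,\iota_D]=0$, so the $b$-contribution is precisely $\sum(-1)^{L_i}\chi(X_1,\dots,\delta X_i,\dots,X_n)$; the terms with one $L$ landing on $\tau$ give the $u\sum(-1)^{K_j}X_i\chi(\dots)$ term; and the terms where an $L$ was commuted past an $\iota$ and produced an $\iota_{[X_i,X_j]}$ give the $u\sum(-1)^{M_{ij}}\chi([X_i,X_j],\dots)$ term. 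The combinatorial factor $\frac1{n!}$ and the signs $L_i,K_j,M_{ij}$ in \eqref{eq:sign1}--\eqref{eq:sign2} come out of the $S_n$-averaging exactly as in the commutative proof of \eqref{eq:Chevalley 1}.

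The main obstacle I expect is bookkeeping of signs and of the two gradings simultaneously: the Hochschild chains carry an internal grading (from the graded algebra $\A$) and a homological grading, the operators $\iota_X$ have degree $|X| = \deg(X)+1$, and the formal parameter $u$ keeps track of the $B$-versus-$b$ split. Getting the signs $(-1)^{|X_i||X_j|}$ in the $S_n$-sum to interact correctly with the Koszul signs from sliding $b+uB$ and the $L$'s past the $\iota$'s — and to match the explicit $K_j$, $M_{ij}$ — is where the real work lies; the structural content is entirely in the three Cartan identities on generators, which are easy. One clean way to organize this is to verify the Cartan relations on generators (where "generators" means the operators $\iota_X$ for a basis of $\LL$ together with $b,B$), invoke that graded-derivation identities valid on generators extend automatically, and then quote the purely formal derivation of the Chevalley--Eilenberg formula — which is how \eqref{eq:Chevalley 1} itself was obtained from \eqref{eq:Cartan 1} — applied verbatim in the present $\Z[u]$-linear setting. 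This keeps the sign analysis confined to the finite generator check, after which the proposition follows formally.
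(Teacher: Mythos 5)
Your proposal has a genuine gap at its core. You write that ``one checks by a direct computation on the defining formulas that $[\iota_X,\iota_Y]=0$ (graded) \ldots and that $[L_X,\iota_Y]=\iota_{[X,Y]}$.'' This is false on the Hochschild chain complex: for two derivations $D_1,D_2$, one has
$$\iota_{D_1}\iota_{D_2}(a_0\otimes a_1\otimes a_2\otimes\cdots)=\pm\,a_0 D_2(a_1)D_1(a_2)\otimes a_3\otimes\cdots,\qquad \iota_{D_2}\iota_{D_1}(a_0\otimes a_1\otimes a_2\otimes\cdots)=\pm\,a_0 D_1(a_1)D_2(a_2)\otimes a_3\otimes\cdots,$$
which do not agree even up to sign, so $[\iota_{D_1},\iota_{D_2}]\neq 0$. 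Likewise $[[B,\iota_D],\iota_{D'}]-\iota_{[D,D']}$ is not identically zero but only a homotopy; this is precisely the content of \S A.4, which emphasizes that the noncommutative Cartan calculus exists only as a \emph{homotopy} calculus, in a way that is nontrivial and depends on auxiliary choices. So the ``finite check on generators'' you propose fails, and the formal Chevalley--Eilenberg derivation cannot be quoted verbatim.

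The content of Proposition \ref{prop:chevalley} is that the failure of strict Cartan relations on $C_{-\bullet}(\A)$ is killed by two ingredients acting together: the $S_n$-symmetrization built into $\chi$ in \eqref{eq:chi}, and the application of the trace $\tau$ (which is cyclic and vanishes on $[\A,\A]$ and on the image of $b$). The paper's proof does not attempt to verify Cartan identities at the chain level; it first observes $[b,\iota_X]=\iota_{\delta X}$ to dispose of the $u$-free term, then specializes to a \emph{single} odd derivation $D$ and a \emph{single} even element $x$ (so all permutations coincide and all signs are $\pm 1$), carries out a direct computation of $\chi(D^{n+1}x^N)(Bc)$ using the trace property and the Leibniz rule for $D$, and finally recovers the general case by polarization --- tensoring with $k[t_1,\ldots,t_n]$, applying the special case to $X=t_1X_1+\cdots+t_nX_n$, and extracting the coefficient of $t_1\cdots t_n$. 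To repair your argument you would need to replace the on-the-nose Cartan identities by statements that hold modulo the kernel of the symmetrized trace pairing, which is essentially what the direct computation establishes; an a priori reduction to the commutative Chevalley formula is not available.
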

(cf. \cite{NT}, \cite{NT1}). Here  $L_i=\sum_{a\geq i}|X_a|;$ $K_j$ is as in \eqref{eq:sign1} and $M_{ij}$ as in \eqref{eq:sign2}.
\begin{proof} We observe directly that
$[b, \iota_X]=\iota_{\delta X}.$ This implies immediately that the formula in the statement of the proposition is true modulo $u$, since $\tau$ is zero on the image of $b$. We denote $\chi (X_1, \ldots, X_n)$ by $\chi (X_1 \ldots X_n)$. Let $c=a_0\otimes\ldots\otimes a_n.$
Let $D$ be an odd derivation and $x$ an even element of ${\mathcal A}$. We are going to prove Proposition \ref{prop:chevalley} first in the case when all $X_i$ are equal to $D$ or $x$, so $|X_i|=0$ and the signs in the statement of the proposition are $K_i=L_j=+1;$ $M_{ij}=-1$. By definition,
$$\chi(D^{n}x^N)=\frac{n!N!}{(N+n)!}\sum _{N_0+N_1+\ldots+N_n=N}\tau (a_0 x^{N_0}D(a_1)x^{N_1}\ldots D(a_n) x^{N_n})$$
therefore
$$\chi(D^{n+1}x^N)(Bc)=\frac{(n+1)!N!}{(N+n+1)!}\times$$
$$\times\sum _{\sum_0^{n+1} N_k=N}\sum_{i=0}^n \pm\tau (x^{N_0}D(a_{i+1}) x^{N_1}D(a_{i+2})x^{N_2}\ldots D(a_{i}) x^{N_{n+1}});$$
the sign in the above is $(-1)^{\sum_{k>i}(|a_k|+1)\sum_{k\leq i}(|a_k|+1)}.$ 
Using the fact that $\tau$ is a trace, re-indexing the $N_k$, and re-ordering the $D(a_i)$ (which cancels out the previous sign), we see that the above is equal to
$$\frac{(n+1)!N!}{(N+n+1)!}\sum _{\sum_0^{n} N_k=N}\sum_{i=0}^n\sum_{N_i'+N_i''=N_i}\tau(D(a_0)x^{N_0}\ldots D(a_n)x^{N_n})=$$
$$(N+n+1)\frac{(n+1)!N!}{(N+n+1)!}\sum _{\sum_0^{n} N_k=N} \tau(D(a_0)x^{N_0}\ldots D(a_n)x^{N_n});$$
by the Leibniz identity for $D$, we rewrite this as
$$\frac{(n+1)!N!}{(N+n)!}(\sum _{\sum_0^{n} N_k=N}  D\tau(a_0x^{N_0}\ldots D(a_n)x^{N_n})$$
$$-\frac{1}{2}\sum _{\sum_0^{n} N_k=N} \sum_{i=1}^n \pm \tau(a_0x^{N_0}\ldots [D,D](a_i)x^{N_i}\ldots D(a_n)x^{N_n})$$
$$-\sum _{\sum_0^{n} N_k=N} \sum_{i=0}^n \sum_{N_i'+N_i''=N_i}\pm \tau(a_0x^{N_0}\ldots x^{N_i'}D(x)x^{N_i''}\ldots D(a_n)x^{N_n}));$$
the signs appearing in the above terms from the Leibniz identity, with an overall minus sign, are precisely the signs from the definition of $\iota_{[D,D]}$ and $\iota_{D(x)}.$ We see that the above is equal to
$$(n+1)D\chi(D^{n}x^N) -\frac{n(n+1)}{2}\chi([D,D]D^{n-1}x^N)-$$
$$-(n+1)N\chi(D^nD(x)x^{N-1}).$$
Therefore
$$\chi(D^{n+1}x^N)( Bc)=(n+1)D\chi(D^{n}x^N) (c)-$$
$$-\frac{n(n+1)}{2}\chi([D,D]D^{n-1}x^N)(c)-(n+1)N\chi(D^nD(x)x^{N-1})(c).$$
This is exactly the equality of the terms of the formula in the statement of the proposition that contain $u$, in the special cases $X_1=\ldots =X_{n+1}=D$ and $X_{n+2}=\ldots =X_{N+n+1}=x$ (and with $n$ replaced by $N+n$). To prove the general case, tensor our algebra by $k[t_1, \ldots, t_n]$ where $|t_i|=-|X_i|,$ put $X=t_1X_1+\ldots t_n X_n,$ apply the special case to $\chi(X,\ldots, X),$ and look at the coefficient at $t_1\ldots t_n.$
\end{proof}
\subsection{Construction of the morphism}\label{ss:construction of the morphism}
Now let $\A=\Omega (\Rep (A))\otimes \End(V)$, $\Rep (A)$ being the scheme of representations of an algebra $A$ in a given finite dimensional space $V$. Consider the subalgebra $\End(V)$ consisting of constant functions. Let $K=\Omega (\Rep(A))$ and $\tau: \A\to K$ be the matrix trace. Let $\Gg=\End(V)$ viewed as a Lie algebra.
\begin{prop}\label{prop:map cc to eq(rep)} The map
$$c, x\to \tau (\exp(\iota_{d+x}))(c),$$
$x\in \Gg$ and $c\in C_{-\bullet}(\A),$
composed with the morphism induced by ${\operatorname {ev}}: A\to \A$, defines a morphism of complexes
$${\rm{HKR}}(x): C_{-\bullet}(A)((u)), b+uB\to \Omega(\Rep(A))[[\Gg]]^G((u)), ud+\iota_x$$
\end{prop}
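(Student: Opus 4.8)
The plan is to reduce this statement to the cocycle identity of Proposition \ref{prop:chevalley}, which has already been established. The key observation is that the generating-function expression $\tau(\exp(\iota_{d+x}))(c)$ is simply a repackaging of the multilinear cochains $\chi(X_1,\dots,X_n)$ from \S\ref{ss:the map chi}: we take $\LL \subset \A[1]\oplus\Der(\A)$ to be the two-dimensional graded subalgebra spanned by the odd element $d$ (the de Rham differential on $\Omega(\Rep(A))$, viewed as a square-zero element of $\A[1]$, i.e.\ $|d|=0$ as an operator $\iota_d$ of degree $-1$) and by the degree-zero derivations coming from $\Gg = \End(V)$ acting on $\Omega(\Rep(A))\otimes\End(V)$ via the adjoint action. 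Expanding $\exp(\iota_{d+x}) = \sum_{n\geq 0}\frac{1}{n!}\iota_{d+x}^n$ and then expanding $\iota_{d+x}^n$ multilinearly in the two summands $d$ and $x$, the coefficients are precisely $\chi(X_1,\dots,X_n)$ with each $X_i \in \{d,x\}$, since the symmetrization over $S_n$ in \eqref{eq:chi} exactly matches the multinomial combinatorics produced by expanding a power of a sum. Thus $\tau(\exp(\iota_{d+x}))(c) = \sum_{m,n} \frac{1}{m!n!}\chi(d^{m}x^{n})(c)$, recording a class in $\Omega(\Rep(A))[[\Gg]]((u))$ after multiplying by the appropriate power of $u$.

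First I would set up the two differentials precisely. On the source, $b+uB$ is the standard ($u$-linear extension of the) Hochschild differential computing $HC_{-\bullet}$; on the target, $\Omega(\Rep(A))[[\Gg]]((u))$ carries $ud + \iota_x$, where $d$ is the de Rham differential tensored with $\id$ and $\iota_x$ is contraction with the vector field generated by $x\in\Gg$ — which is exactly the equivariant differential $d_{\Gg}$ of \eqref{dgdfn} read off the generating series (the sum over a basis $\{e_r\}$ of $\Gg$ is encoded by allowing $x$ to vary, i.e.\ by working over $k[[\Gg]]$). I would then match terms: applying Proposition \ref{prop:chevalley} in the special case $\LL = \langle d\rangle \oplus \Gg$, the three sums on the right-hand side become, respectively, the $\delta$-term which contributes $\iota_{\delta d} = \iota_{\ad(1_\A)}\cdot(\text{something})$ — here one uses $\delta d = (-1)^{|d|}\ad(d)$ and the fact that $d$ acting as an inner derivation on $\A$ corrects the de Rham differential — the $u[X_i,X_j]$-bracket term which vanishes when both arguments are $d$ (as $[d,d]=0$ in the square-zero Lie algebra) and reproduces the adjoint action $[x,x']$ otherwise, and the $u\cdot X_i$-term which produces exactly $ud$ acting on the class. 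Carefully bookkeeping the factorials $\frac{1}{m!n!}$ against the $\frac{1}{n!}$ in Proposition \ref{prop:chevalley}, these three contributions assemble into $(ud + \iota_x)$ applied to the target class.

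The step I expect to be the main obstacle is the sign and normalization bookkeeping: verifying that the signs $K_j, M_{ij}, L_i$ appearing in Proposition \ref{prop:chevalley} collapse, after the exponential repackaging and after specializing all $X_i$ to lie in $\{d, x\}$ with $|d|=|x|=0$ as operator degrees, into exactly the signs implicit in $ud + \iota_x$ on $\Omega(\Rep(A))[[\Gg]]((u))$ — and that the factor $(-1)^{|d|}$ in $\delta d = (-1)^{|d|}\ad(d)$ together with the identity $\tau\circ(\text{ev-induced map})$ being $G$-equivariant (so that the inner-derivation terms $\iota_{\ad(a)}$ die under $\tau$, exactly as in the proof that $[b,\iota_X]=\iota_{\delta X}$ kills the image of $b$ modulo $u$) is enough to identify the residual $\delta$-contribution with the correction turning the naive de Rham differential into the genuine one on $\Rep(A)$. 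Finally, I would note that the composition with the evaluation map ${\rm ev}: A \to \A = \Omega(\Rep(A))\otimes\End(V)$ is a chain map for $b+uB$ on the nose (it is an algebra map, hence compatible with Hochschild and Connes differentials), so pre-composing preserves the cocycle property, yielding the asserted morphism of complexes ${\rm HKR}(x)$. $\hfill\square$
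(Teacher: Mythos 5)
Your high-level plan (expand $\exp(\iota_{d+x})$ into the multilinear cochains $\chi$ of \eqref{eq:chi} and then invoke Proposition \ref{prop:chevalley}) is indeed the paper's approach, but the execution has a genuine error in how you place $d$ and $x$ inside $\A[1]\oplus\Der(\A)$, and this error propagates into a misidentification of the crucial residual term.

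In the paper's setup, $d$ is the de Rham differential on $\Omega(\Rep(A))$, so it lies in $\Der(\A)$ (where $\delta$ vanishes by definition), while $x\in\Gg=\End(V)$ is a constant matrix-valued function, i.e.\ an element of $\A$, and so lies in $\A[1]$ (where $\delta x=(-1)^{|x|}\ad(x)$). You have reversed these roles: you call $d$ ``a square-zero element of $\A[1]$'' and put $\Gg$ into the derivation part. With your assignment $\delta d$ would be the nontrivial term; in fact $\delta d=0$ and $\delta x=\ad(x)$ is what survives. Your statement that the $\delta$-contribution is ``the correction turning the naive de Rham differential into the genuine one on $\Rep(A)$'' is therefore off target: the $ud$ term on the right already comes cleanly from the $u\cdot X_j$-summand of Proposition \ref{prop:chevalley}, and it needs no correction.

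What is actually left over, after using $[d,d]=0$, $[x,x]=0$, and $d(x)=0$ to kill all the bracket terms, is the single series $\sum_{K\geq 0}\frac{1}{K!}\chi\bigl(\delta(x)(d+x)^K\bigr)(c)$. The missing idea is to unwind this as follows: one slot of $c=\alpha_0\otimes\cdots\otimes\alpha_n$ receives $\ad(x)$ instead of $\iota_d=d$; by $G$-equivariance of the evaluation map one has $[x,\mathrm{ev}(a_i)]=L_x\,\mathrm{ev}(a_i)$, the Lie derivative along the vector field generated by $x$; and since $L_x\alpha_i=\iota_x(d\alpha_i)$ for the $0$-form $\alpha_i$, summing over all positions $i$ reproduces precisely $\iota_x$ applied to the full differential form $\operatorname{tr}(\alpha_0 x^{N_0}d\alpha_1 x^{N_1}\cdots d\alpha_n x^{N_n})$, i.e.\ $\iota_x\,\mathrm{HKR}(x)(c)$. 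This identification is what yields the $\iota_x$ summand of the equivariant differential $ud+\iota_x$; it does not vanish under $\tau$ and it is not a correction to $d$. Without this step the proof does not close.
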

Here $\iota_x$ refers to the contraction of a form by a vector field corresponding to $x$.
(Note that the value of this map at $x=0$ is the HKR morphism).
\begin{proof} By Proposition \ref{prop:chevalley}, taking into account that $[d,d]=0$ and $d(x)=0,$
$${\rm{HKR}}(x)((b+uB)(c))=ud{\rm{HKR}}(x)(c)+\sum _{K=0}^\infty \frac{1}{K!}\chi (\delta(x) (d+x)^{K})(c).$$
If we put $c=\alpha_0 \otimes \ldots \otimes \alpha_n,\; \alpha _i\in {\mathcal A},$ then the second summand is equal to
$$\sum _{N=0}^\infty \sum _{\sum _0^n N_k =N} \sum _{i=1}^N {\rm{tr}}(\alpha_0 x^{N_0}  d\alpha_1 x^{N_1} \ldots [x,\alpha_i]x^{N_i}\ldots d\alpha_n x^{N_n})$$
Now observe that for $x\in \Gg,$ if $L_x {\rm{ev}}(a)=[x,{\rm{ev}}(a)]$ where $L_x$ denotes the Lie derivative by the vector field on ${\rm{Rep}}(A)$ corresponding to $x$. Therefore, for $\alpha_i={\rm{ev}}(a_i),$ the above formula turns into
$$\sum _{N=0}^\infty \sum _{\sum _0^n N_k =N} \sum _{i=1}^N {\rm{tr}}(\alpha_0 x^{N_0}  d\alpha_1 x^{N_1} \ldots L_x\alpha_i x^{N_i}\ldots d\alpha_n x^{N_n})=$$
$$\iota_x \sum _{N=0}^\infty \sum _{\sum _0^n N_k =N} {\rm{tr}}(\alpha_0 x^{N_0}  d\alpha_1 x^{N_1} \ldots d\alpha_n x^{N_n})=\iota _x {\rm{HKR}}(x)(c).$$
\end{proof}
\begin{remark}\label{rmk:hochschild hkr}
As for the Hochschild homology, the fact that the map
$${\rm{HKR}}:C_{-\bullet}(A)\to \Omega(\Rep(A))$$
(the value of ${\rm{HKR}}(x)$ at $x=0$) sends Hochschild cycles to basic forms follows from the formula (used in the proof above) $\iota _x {\rm{HKR}}(x)(c)={\rm{HKR}}(x\iota_\Delta (c))$, and the fact that $\iota_\Delta$ kills the image of $b$.
\end{remark}
\subsection{More on noncommutative calculus}\label{ss:more on nc calc} Let us finish by saying a few words about noncommutative analogues of formulas \eqref{eq:Cartan 1} and other algebraic properties of forms on manifolds. Note first that operators $\iota_X$ and $L_X$ can be defined not just for vector fields and functions but for multivector fields; they satisfy \eqref{eq:Cartan 1}, the bracket being the Schouten-Nijenhuis bracket. Since all the equations \eqref{eq:Cartan 1} are in terms of commutators, they can be interpreted as an action of certain differential graded Lie algebras on the complex of forms. This latter formulation has a noncommutative analogue as follows. For any (graded) associative algebra $A$, let $\Gg (A)$ be the graded Lie algebra of its Hochschild cochains, with the Gerstenhaber bracket and the Hochschild differential $\delta$. Then on $(C_{-\bullet}(A)[[u]], b+uB)$ there is a $k[[u]]$-linear, $(u)$-adically continuous structure of an $L_\infty$ module over $(\Gg (A)[\epsilon, u], \delta+u\frac{\partial}{\partial \epsilon}),$ such that, for a cochain $D$ of degree $\leq 1,$ $D\epsilon $ acts by $-\iota _D$ as in \eqref{eq: iota hoch 1}, \eqref{eq: iota hoch 2}. From that, one deduces a construction of a flat superconnection on the periodic cyclic complex of a family of algebras (\cite{DTT}, Proposition 2). The formulas for the $L_\infty$ action are somewhat more complicated than the definition of $\chi$ in \eqref{eq:chi}. It would be interesting to better understand the relation between the above and the construction of the Gauss-Manin connection in the present paper.

Let us finish by mentioning one more feature of the classical calculus. The space of multivector fields is in fact a Gerstenhaber algebra, in particular a graded commutative algebra; one has 
\begin{equation}\label{eq:calc}
\iota_X\iota_Y=\iota_{XY};\; L_{XY}=(-1)^{|Y|}L_X\iota_Y+\iota_XL_Y.
\end{equation}
An algebraic system uniting \eqref{eq:Cartan 1} and \eqref{eq:calc} is called a calculus; it was shown in \cite{KS2}, \cite{TT}, \cite{DTT1} that, for an associative algebra $A$, the pair of complexes $(C^*(A,A), C_*(A,A))$ is always a homotopy calculus. This structure quite inexplicit and not canonical; it depends on a choice of a Drinfeld associator.

{\small{
\bibliographystyle{amsalpha}
\providecommand{\bysame}{\leavevmode\hbox to3em{\hrulefill}\thinspace}
\providecommand{\MR}{\relax\ifhmode\unskip\space\fi MR }
\providecommand{\MRhref}[2]{%
  \href{http://www.ams.org/mathscinet-getitem?mr=#1}{#2}
}
\providecommand{\href}[2]{#2}

}}

\smallskip

{\footnotesize
\noindent
{{\textbf{V.G.}: Department of Mathematics, University of Chicago, 5734 S. University Ave,
Chicago, IL
60637, USA;}\newline
\hphantom{x}\quad\, {\tt ginzburg@math.uchicago.edu}} \\
{\bf T.S.}: Department of Mathematics, MIT, Cambridge, MA 02139, USA.
  \hphantom{x}\quad\, {\tt trasched@gmail.com}}

\end{document}